\documentclass[11pt,a4paper,oneside]{amsart}
\usepackage[utf8]{inputenc}
\usepackage{a4wide}

\usepackage{amsmath,amssymb,bbm}
\usepackage{aliascnt}

\usepackage{csquotes}

\usepackage[all,cmtip]{xy}

\usepackage[hidelinks]{hyperref}
\usepackage{amsthm}

\newcommand*{\mc}[1]{\mathcal{#1}}
\newcommand*{\opname}[1]{\operatorname{#1}}
\newcommand*{\cat}[1]{\textnormal{\underline{#1}}}

\newcommand*{\GL}{\opname{GL}}
\newcommand*{\SL}{\opname{SL}}
\newcommand*{\PGL}{\opname{PGL}}
\renewcommand*{\phi}{\varphi}

\newcommand*{\NN}{\mathbbm{N}}
\newcommand*{\ZZ}{\mathbbm{Z}}
\newcommand*{\CC}{\mathbbm{C}}
\newcommand*{\QQ}{\mathbbm{Q}}
\newcommand*{\EE}{\mathbbm{E}}
\newcommand*{\PP}{\mathbbm{P}}

\newcommand*{\Jac}{\opname{Jac}}
\newcommand*{\ka}{\kappa}
\newcommand*{\la}{\lambda}
\newcommand*{\ga}{\gamma}

\newcommand*{\rk}{\opname{rk}}
\newcommand*{\pr}{\opname{pr}}
\newcommand*{\id}{\opname{id}}
\newcommand*{\tup}[1]{\underline{#1}}
\newcommand*{\gr}{{\opname{gr}}}
\newcommand*{\len}{\opname{l}}
\newcommand*{\Hom}{\opname{Hom}}
\newcommand*{\End}{\opname{End}}
\newcommand*{\ev}{\opname{ev}}
\newcommand*{\Fl}{\opname{Fl}}

\newcommand*{\Spec}{\opname{Spec}}

\newcommand*{\Gies}{\opname{Gies}}
\newcommand*{\gies}{\opname{gies}}
\newcommand*{\Quot}{\opname{Quot}}

%%%%%%%%%%%%%%%%%%%%%%%
% Theorems
%%%%%%%%%%%%%%%%%%%%%%%%%%%%%%%%
\newcounter{internal}[section]
 
\newaliascnt{intthm}{internal} 
\newaliascnt{intprop}{internal} 
\newaliascnt{intcor}{internal} 
 
\newaliascnt{intlemma}{internal} 
\newaliascnt{intdef}{internal} 
\newaliascnt{intex}{internal} 
\newaliascnt{intrem}{internal}

%%%%%%%%%%%%%
\theoremstyle{plain}
\newtheorem{thm}[intthm]{Theorem}
\newtheorem{prop}[intprop]{Proposition}
\newtheorem{cor}[intcor]{Corolary}
\newtheorem{lemma}[intlemma]{Lemma}
%%%%%%%%%%%%%%%%%%
\theoremstyle{definition}
\newtheorem{mydef}[intdef]{Definition}
%%%%%%%%%%%%%%%%%
\theoremstyle{remark}
\newtheorem{rem}[intrem]{Remark}

\begin{document}
\title[Moduli of Decorated Swamps]
    {Moduli of Decorated Swamps on a Smooth Projective Curve}

\author{Nikolai Beck}
\email{Nikolai.Beck@b-tu.de}
\address{
  Mathematisches Institut\\
  BTU Cottbus--Senftenberg\\
  PF 101344\\
  03013 Cottbus\\
  Germany}

%\classification{14H60, 14D20}
\keywords{parabolic vector bundles, level structure}

\begin{abstract}
In order to unify the construction of the moduli space of vector bundles with different types of global decorations, such as Higgs bundles, framed vector bundles and conic bundles, A. Schmitt introduced the concept of a swamp. In this work, we consider vector bundles with both a global and a local decoration over a fixed point of the base. This generalizes the notion of parabolic vector bundles, vector bundles with a level structure and parabolic Higgs bundles. We introduce a notion of stability and construct the coarse moduli space for these objects as the GIT-quotient of a parameter space. In the case of parabolic vector bundles and vector bundles with a level structure our stability concept reproduces the known ones. Thus, our work unifies the construction of their moduli spaces. 
\end{abstract}

\maketitle

%%%%%%%%%%%%%%%%%%%%%%%%%%%%%%%%%%%%%%%
\section{Introduction}
Let $X$ be a smooth projective curve over the field of complex numbers and fix a point $x_0$ of $X$. There are different reasons to equip vector bundles on $X$ with certain global and local decorations: a parabolic Higgs bundle is a vector bundle $E$ together with a Higgs field $\phi:E\to E\otimes \omega_X$, where $\omega_X$ denotes the canonical sheaf, and a weighted flag over $x_0$ which is $\phi$-invariant. There is a notion of stability and an extension of the Narasimhan--Seshadri theorem tells us that a parabolic Higgs bundle is stable if and only if it arises from an irreducible representation of $\pi_1(X\setminus\{x_0\})$ with fixed monodromy \cite{simpson1990}.

Another example of a local decoration on a vector bundle $E$ is a level structure, i.e., a complete homomorphism $f:E_{|\{x_0\}}\Rightarrow \CC^{\rk(E)}$. Ng\^o Dac \cite{Ngo2007} defined a stability condition for these objects and applied Geometric invariant theory (GIT) in order to obtain a compactification of the stack of shtukas, which plays an important role in the Langlands program. 

A generalized version of a vector bundle with a global decoration is a swamp: fix a representation $\rho:\GL(r)\to \GL(V)$. Given a vector bundle $E$ of rank $r$ we can construct its associated vector bundle $E_\rho$ with typical fiber $V$. Then, a swamp is a vector bundle of rank $r$ together with a line bundle $L$ and a non-trivial homomorphism $\phi:E_\rho\to L$. A. Schmitt introduced a stability condition for swamps and constructed their coarse moduli space \cite{schmitt08}.

In this work we generalize the concept of local decorations in the same spirit: we fix two representations $\rho$ and $\sigma$ of $\GL(r)$. We define a \emph{decorated swamp} as a swamp $(E,L,\phi)$ together with a point $s\in E^{\vee}_\sigma$ over $x_0$. We define a parameter dependent notion of stability for these objects and construct a parameter space with a locally universal family and a compatible group action. Then we define an equivariant morphism from this space into a projective Gieseker space. A careful calculation shows that a decorated swamp is stable if and only if the corresponding point in the Gieseker space is GIT-stable with respect to a certain linearization. Finally, the coarse moduli space is obtained as the GIT-quotient of the parameter space.

In the  case that $\rho$ is trivial we call a decorated swamp a decorated vector bundle. For certain choices of $\sigma$ such an object describes a parabolic vector bundle or a vector bundle with a level structure. We show that in these cases our concept of stability coincides with the usual ones.

While the category of decorated swamps also describes parabolic Higgs bundles, we cannot expect the stability conditions to coincide since in contrast to the stability of parabolic Higgs bundles our definition of stability is parameter dependent. As was the case for swamps and Higgs bundles, we expect to find the known stability condition in the form of asymptotic stability. This is planned for a future publication.

\subsection*{Notation and Conventions}
In this work we will identify a geometric vector bundle $E$ with its sheaf of sections. If $F$ is a subsheaf of $E$, the subbundle generically generated by $F$ is
\[
 \ker(F\to (E/F)/T) \,,
\]
where $T$ is the torsion subsheaf of $E/F$. We denote by $\PP(E)$ the hyperplane bundle $\opname{Proj}(\opname{Sym}^*E)$.

\subsection*{Acknowledgement}
These results presented here are part of the authors PhD thesis \cite{Beck2014} supervised by Alexander Schmitt, whom the author would like to thank.

%%%%%%%%%%%%%%%%%%%%%%%%%%%%%%%%%%%%%%%
\section{Preliminaries}
In order to fix notation and to recall some specific details which are important for our particular application, we gather some basic facts about Geometric invariant theory and moduli spaces. We also recall some properties of homogeneous representations.

\subsection{Geometric Invariant Theory}
\label{subsec:GIT}
Let $G$ be a linear reductive group and let $\rho:G\times X\to X$ be an action on a scheme $X$.
 A \emph{good quotient} is an affine $G$-invariant morphism $\pi:X \to Y$, such that
 \begin{enumerate}
  \item For any open subset $U\subset Y$ the map $\pi^{\#}:\mc{O}_Y(U)\to \mc{O}_X(\pi^{-1}(U))$ is an isomorphism.
  \item For any closed invariant subset $Z\subset X$ the image $\pi(Z)$ is closed in $Y$.
  \item If $Z_1,Z_2\subset X$ are disjoint closed invariant subset, then $\pi(Z_1)\cap\pi(Z_2)=\varnothing$.
 \end{enumerate}
If the preimage $\pi^{-1}(y)$ of any point $y\in Y$ contains only one orbit, then $Y$ is a \emph{geometric quotient}. A good quotient is in fact a categorical quotient. If $X$ is affine with coordinate ring $R$, then $\Spec(R^G)$ is a good quotient. For non-affine schemes we first note:
\begin{prop}[\protect{\cite[5.1. Lemma]{Ramanathan1996b}, \cite[Lemma 4.6]{Gieseker1977}}]
 \label{prop:affine_morphism->quotient}
 Let $f:X\to X'$ be an affine $G$-invariant morphism and suppose the good quotient $\pi:X'\to X'/\!\!/G$ exists.
 \begin{enumerate}
  \item The good quotient $\pi:X\to X/\!\!/G$ exists.
  \item If $f$ is proper and $\pi'$ is a universal good quotient, then the induced map $\bar{f}:X/\!\!/G\to X'/\!\!/G$ is also proper.
  \item If $f$ is injective and $\pi'$ is a geometric quotient, then $\pi$ is also a geometric quotient.
 \end{enumerate}
\end{prop}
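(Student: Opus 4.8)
My plan is to build the quotient locally over $Y':=X'/\!\!/G$ for part (1), to combine the universality of $\pi'$ with a valuative argument for part (2), and to run a direct orbit computation for part (3). Throughout I regard $f$ as $G$-equivariant, so that $\pi'\circ f$ is invariant and factors through a morphism $\bar f\colon X/\!\!/G\to Y'$.

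For (1), I would first cover $Y'$ by affine open subsets $U_i$. Since a good quotient is by definition an affine morphism, each $V_i:=\pi'^{-1}(U_i)$ is affine and $G$-invariant, and, as recalled above, the reductivity of $G$ identifies $U_i$ with $\Spec(\mc{O}(V_i)^G)$. As $f$ is affine, $W_i:=f^{-1}(V_i)$ is again affine and $G$-invariant, so $Q_i:=\Spec(\mc{O}(W_i)^G)$ is a good quotient of $W_i$. Because the formation of invariants commutes with the localizations of $\mc{O}(V_i)$ that glue the $U_i$, the $Q_i$ glue to a scheme $Y$ equipped with a morphism $\pi\colon X\to Y$ and a morphism $Y\to Y'$. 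The three defining properties of a good quotient are local on the target, so it suffices to check them on each $W_i\to Q_i$, where they hold by the affine case; hence $\pi\colon X\to X/\!\!/G=Y$ is a good quotient.

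For (2), I would set $\tilde X:=X'\times_{Y'}Y$. Since $\pi'$ is a universal good quotient, the projection $\tilde\pi\colon\tilde X\to Y$ is again a good quotient, and the pair $(f,\pi)$ induces a $G$-equivariant $Y$-morphism $h\colon X\to\tilde X$ with $\pi_{X'}\circ h=f$, where $\pi_{X'}\colon\tilde X\to X'$ is the base change of $\bar f$; as $\pi_{X'}$ is separated and $f$ is proper, $h$ is proper. To conclude that $\bar f$ is proper I would verify the valuative criterion: given a discrete valuation ring $R$ with fraction field $K$ and a commutative square relating $\eta\colon\Spec K\to Y$ and $\Spec R\to Y'$, I would lift $\eta$ to a $K'$-point of $X$ after a finite extension $K'/K$ (possible since $\pi$ is surjective), lift the resulting $R'$-point of $Y'$ to $X'$ using that $\pi'$ stays a good quotient after base change to $\Spec R'$, and then fill in an $R'$-point of $X$ by the properness of $f$; its image under $\pi$ gives the required extension, which descends from $R'$ to $R$ by uniqueness. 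I expect the main obstacle to be exactly this lifting of an $R$-valued point of the quotient to the total space: it is where the universality of $\pi'$ is essential, since one must base change the good quotient to $\Spec R'$ and land in the closed orbit of the special fibre.

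For (3), part (1) already gives that $\pi$ is a good quotient, so it remains to show that each fibre of $\pi$ is a single $G$-orbit. Let $x_1,x_2\in X$ satisfy $\pi(x_1)=\pi(x_2)$. Then $\pi'(f(x_1))=\bar f(\pi(x_1))=\bar f(\pi(x_2))=\pi'(f(x_2))$, so, $\pi'$ being a geometric quotient, the points $f(x_1)$ and $f(x_2)$ lie in one $G$-orbit; writing $f(x_2)=g\cdot f(x_1)=f(g\cdot x_1)$ by equivariance and using that $f$ is injective forces $x_2=g\cdot x_1$. Hence $x_1$ and $x_2$ lie in the same orbit and $\pi$ is a geometric quotient.
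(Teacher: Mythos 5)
The paper does not actually prove this proposition --- it is quoted from Ramanathan \cite[5.1 Lemma]{Ramanathan1996b} and Gieseker \cite[Lemma 4.6]{Gieseker1977} --- so your attempt has to be judged against the standard proofs there. Your parts (1) and (3) are correct and are essentially those standard arguments: glue $\Spec\bigl(\mc{O}(f^{-1}\pi'^{-1}(U_i))^G\bigr)$ over an affine cover of $X'/\!\!/G$, using that taking invariants commutes with localization at invariant functions and that all defining properties of a good quotient are local on the target; and for (3) the orbit computation $f(x_2)=g\cdot f(x_1)=f(g\cdot x_1)$ plus injectivity of $f$.

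Part (2) contains a genuine gap. The step \enquote{lift the resulting $R'$-point of $Y'$ to $X'$ \dots\ extending $\Spec K'\to X'$} is precisely the hard point, and as stated it is false: the particular $K'$-point $f(x_{K'})$ of the generic fibre of $X'\times_{Y'}\Spec R'$ need not extend to an $R'$-point. (Already for $\mathbb{G}_m$ acting on $\mathbb{A}^2$ by $t\cdot(x,y)=(tx,t^{-1}y)$, with universal good quotient $\mathbb{A}^1=\Spec\CC[xy]$ and $R=\CC[s]_{(s)}$ mapping in by $xy\mapsto s$, the point $(1/s,s^2)$ of the generic fibre does not extend over $R$.) One must first replace the chosen lift $x_{K'}$ by $g\cdot x_{K'}$ for a suitable $g\in G(K'')$, and the existence of such a $g$ --- a semistable-reduction statement for good quotients, roughly that the orbit closure of $x_{K'}$ in $X'\times_{Y'}\Spec R'$ meets the special fibre in a point reachable by a valuation --- is the real content of the assertion; you flag it as \enquote{the main obstacle} but give no argument, and the auxiliary scheme $\tilde X$ and the proper map $h$ you construct are never used afterwards. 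The standard proof avoids valuations entirely: $\bar f$ is affine by your part (1), hence separated and quasi-compact, so it suffices to prove it universally closed. Given $T\to Y'$ and a closed $W\subset (X/\!\!/G)\times_{Y'}T$, its preimage $\tilde W$ in $X\times_{Y'}T$ surjects onto $W$ (good quotients are surjective and surjectivity is stable under base change), $f_T(\tilde W)$ is a closed $G$-invariant subset of $X'\times_{Y'}T$ (properness of $f$ is stable under base change), and $X'\times_{Y'}T\to T$ is again a good quotient by universality of $\pi'$, hence maps closed invariant sets to closed sets; the resulting closed image in $T$ is exactly the image of $W$. I would replace your part (2) by this argument.
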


In general, one needs some additional structure. A \emph{linearization} of $\rho$ in a line bundle $L$ on $X$ is an action $\bar{\rho}: G\times L \to L$ with $\pr_X\circ \bar{\rho}=\rho\circ (\id_G\times \pr_X)$, such that the map $L_x \to L_{\rho(g,x)}$ is linear for all $g\in G$. Given such a linearization a point $x\in X$ is called
 \begin{itemize}
  \item \emph{semistable} if there is an $r\in \NN_{>0}$ and a $G$-invariant section $s$ in $L^{\otimes r}$  with $s(x)\neq 0$ such that $X_{s}=X\setminus V(s)$ is affine,
  \item \emph{polystable} if $x$ is semistable and the orbit of $x$ is closed in $X_s$,
  \item \emph{stable} if there is $r\in \NN_{>0}$ and a $G$-invariant $s\in H^0(X,L^{\otimes r})$ such that $X_s$ is affine, $x\in X_s$, $\dim(G\cdot x)=\dim(G)$ and every orbit in $X_s$ is closed.
 \end{itemize}
We denote the open set of (semi-)stable points by $X^{\textnormal{(s)s}}$. 

\begin{prop}[\protect{Mumford, \cite[Theorem 1.10]{GIT}}]
 \label{prop:Mumford}
 Let $X$ be a scheme with an action of a linear reductive group $G$ and a linearization in a line bundle $L$.
 \begin{enumerate}
  \item The quasi-projective good quotient $\pi:X^{\textnormal{ss}}\to X/\!\!/G$ exists. If $X$ is projective and $L$ is ample, then $X/\!\!/G$ is also projective.
  \item There is an open subset $U\subset X/\!\!/G$ such that $\pi^{-1}(U)=X^{\textnormal{s}}$ and $\pi_{|X^{\textnormal{s}}}:X^{\textnormal{s}}\to U$ is a geometric quotient.
 \end{enumerate}
\end{prop}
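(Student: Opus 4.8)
The plan is to reduce everything to the affine situation and glue, which is the classical strategy for Mumford's theorem. First I would produce a $G$-invariant affine open cover of $X^{\textnormal{ss}}$: by the very definition of semistability, each $x\in X^{\textnormal{ss}}$ admits an integer $r\in\NN_{>0}$ and a $G$-invariant section $s\in H^0(X,L^{\otimes r})$ with $s(x)\neq 0$ and $X_s$ affine. The sets $X_s$ obtained this way are $G$-invariant (since $s$ is invariant) and cover $X^{\textnormal{ss}}$. On each such piece $X_s=\Spec(R)$ the action of the reductive group $G$ on the finitely generated algebra $R$ has, by Nagata's finite-generation theorem, a finitely generated ring of invariants $R^G$, and $\Spec(R^G)$ is a good quotient of $X_s$: properties (1)--(3) of a good quotient in the affine reductive case are exactly the classical facts that the invariant functor is exact on the relevant modules and separates disjoint closed invariant subschemes.

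Next I would glue these affine quotients. For two invariant sections $s,t$ the overlap $X_s\cap X_t$ is a principal open subset of $X_s$ cut out by an invariant function (obtained from $s$ and $t$ after balancing weights), and formation of invariants commutes with localization at an invariant element; hence the affine quotient of $X_s\cap X_t$ is canonically an open subscheme of both $\Spec(R_s^G)$ and $\Spec(R_t^G)$. The cocycle condition is immediate, so the $\Spec(R_s^G)$ glue to a scheme $Y=X^{\textnormal{ss}}/\!\!/G$ together with an affine, $G$-invariant morphism $\pi:X^{\textnormal{ss}}\to Y$. Since the defining properties of a good quotient are local on the target, they hold for $\pi$ because they hold on each chart, which yields the quasi-projective good quotient of part (1).

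For the projectivity statement I would take $X\cong\opname{Proj}(R)$ with $R=\bigoplus_{n\ge 0}H^0(X,L^{\otimes n})$, $L$ ample. The invariant subring $R^G=\bigoplus_n H^0(X,L^{\otimes n})^G$ is again finitely generated by Nagata, and I would identify $X/\!\!/G$ with $\opname{Proj}(R^G)$, which is projective. The only point requiring care is that the locus covered by the basic opens $X_s$, for $s$ a homogeneous invariant of positive degree, is precisely $X^{\textnormal{ss}}$, so that $\opname{Proj}(R^G)$ really is the good quotient constructed above and not a quotient of a larger set.

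Finally, for part (2) I would restrict to the stable locus, where by definition every orbit meeting the relevant $X_s$ is closed and of dimension $\dim G$. The separation property (3) forces two distinct closed invariant subsets---in particular two distinct closed orbits---to have disjoint images, so over $U:=\pi(X^{\textnormal{s}})$ each fibre of $\pi$ is a single orbit, which is the defining property of a geometric quotient. It then remains to check that $X^{\textnormal{s}}$ is saturated, i.e. $\pi^{-1}(U)=X^{\textnormal{s}}$, and that $U$ is open: if a stable $x$ and a semistable $y$ satisfy $\pi(x)=\pi(y)$, then the closed orbit $G\cdot x$ is the unique closed orbit in that fibre, so $\overline{G\cdot y}\supseteq G\cdot x$ and hence $\dim(G\cdot y)=\dim G$, from which one deduces that $y$ is stable as well. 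The main obstacle throughout is the finite-generation input and the compatibility of invariant localization with gluing; once these are secured, the good- and geometric-quotient properties follow formally from reductivity and the separation of disjoint closed invariant sets.
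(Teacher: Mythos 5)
This proposition is quoted from Mumford's book and the paper gives no proof of its own, so the only meaningful comparison is with the cited source: your outline reproduces Mumford's standard argument (invariant affine charts $X_s$, Hilbert--Nagata finite generation of invariants, gluing via the compatibility of invariants with localization at invariant elements, and the $\opname{Proj}(R^G)$ description in the ample projective case), and it is correct. The one step you assert rather than argue is the quasi-projectivity (and separatedness) of the glued quotient for a general $X$, which in the reference likewise comes from embedding the charts $\Spec(R_s^G)$ into a suitable $\opname{Proj}$; everything else, including the saturation $\pi^{-1}(U)=X^{\textnormal{s}}$ deduced from the closedness of all orbits in the stable charts, matches the classical proof.
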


Suppose that $X$ is projective. Let $\la:\CC^*\to G$ be a one-parameter subgroup and $x\in X$ a point. The limit point $x_\infty:=\lim_{t\to\infty} \la(t)\cdot x$ is a fixed point for the $\CC^*$ action. The action on the fiber $L_{x_\infty}$ is determined by $t\cdot l=t^{\gamma}l$ for some $\ga\in \ZZ$. One defines
\[
 \mu_{\rho}(\la,x):=-\ga\,.
\]
\begin{prop}[Hilbert--Mumford, \protect{\cite[Theorem 2.1]{GIT}}] 
\label{Hilbert-Mumford-Krit}
A point $x\in X$ is \textup{(}semi-\textup{)}stable with respect to  $\rho$ if and only if any non-trivial one-parameter subgroup $\lambda: \CC^*\to G$ satisfies
 \[
  \mu_\rho(\la,x)(\ge)0\,.
 \]
\end{prop}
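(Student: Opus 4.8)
The plan is to reduce the numerical criterion to a statement about orbit closures in an affine cone, where the decisive input is the fact that for a reductive group the one-parameter subgroups already detect all degenerations to the origin. Since $X$ is projective and we may replace $L$ by a power, I would first use ampleness to fix a $G$-equivariant embedding $X\hookrightarrow\PP(W)$, where $W=H^0(X,L^{\otimes n})^\vee$ carries the linear $G$-action induced by the linearization and $\mc{O}_{\PP(W)}(1)$ restricts to $L^{\otimes n}$. A point $x\in X$ then lifts to a nonzero vector $\hat{x}$ in the affine cone over $\PP(W)$, and both the function $\mu_\rho(\la,x)$ and the notions of (semi)stability transport to this linear picture.

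The second step is to translate semistability into the language of orbit closures. Invariant sections of $L^{\otimes rn}$ that are nonzero at $x$ correspond to $G$-invariant homogeneous polynomials on $W$ not vanishing at $\hat{x}$; by reductivity of $G$ and the separation property of the affine quotient $\Spec(\CC[W]^G)$, such an invariant exists precisely when $0\notin\overline{G\cdot\hat{x}}$. Thus $x$ is semistable if and only if the origin does not lie in the closure of the orbit of any lift, and $x$ is stable if and only if, in addition, the orbit $G\cdot\hat{x}$ is closed with finite stabilizer, i.e.\ $\dim(G\cdot x)=\dim(G)$.

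Third, I would compute $\mu_\rho(\la,x)$ through the weight decomposition $W=\bigoplus_m W_m$, where $\la(t)$ acts on $W_m$ by $t^m$. Writing $\hat{x}=\sum_m\hat{x}_m$, the limit $\lim_{t\to\infty}\la(t)\cdot\hat{x}$ projects to the fixed point $x_\infty$, and a direct calculation identifies $\mu_\rho(\la,x)$ with minus the extremal weight $m$ for which $\hat{x}_m\neq 0$. Consequently $\mu_\rho(\la,x)<0$ holds for some $\la$ exactly when there is a one-parameter subgroup with $\lim_{t\to 0}\la(t)\cdot\hat{x}=0$, and the strict versus non-strict distinction matches the stable versus semistable dichotomy once the stabilizer condition is taken into account.

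The heart of the argument, and the step I expect to be the main obstacle, is the purely group-theoretic lemma that if $0\in\overline{G\cdot\hat{x}}$ then some one-parameter subgroup already carries $\hat{x}$ to the origin. One inclusion is immediate; the converse is the genuine content of the Hilbert--Mumford principle and requires the structure theory of reductive groups. I would proceed by choosing a curve in the orbit closure limiting to the origin, passing to the fraction field $\CC((s))$, and applying the Cartan (Iwahori) decomposition to factor the resulting element of $G(\CC((s)))$ through a one-parameter subgroup up to factors that remain bounded as $s\to 0$; extracting this one-parameter subgroup yields the desired degeneration. Combining this lemma with the translations above gives the chain: $x$ fails to be semistable iff $0\in\overline{G\cdot\hat{x}}$ iff some $\la$ satisfies $\mu_\rho(\la,x)<0$, which is the contrapositive of the asserted criterion; the stable case follows along the same lines together with the orbit-closedness and dimension statements of Step~2.
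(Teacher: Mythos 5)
The paper offers no proof of this proposition: it is quoted directly from Mumford's book \cite[Theorem 2.1]{GIT} and used as a black box, so there is no internal argument to measure yours against. Your outline is the classical proof and is correctly structured: embed $X$ equivariantly into $\PP(W)$ via a power of the ample linearization, translate semistability of $x$ into the condition $0\notin\overline{G\cdot\hat{x}}$ in the affine cone using reductivity and the separation property of $\Spec(\CC[W]^G)$, identify $\mu_\rho(\la,x)$ with the extremal weight of the nonzero components of $\hat{x}$, and reduce everything to the statement that a degeneration of $\hat{x}$ to the origin under $G$ can already be achieved by a one-parameter subgroup. That last step, which you rightly flag as the heart of the matter, is exactly where Mumford's argument lives: take a $\CC((s))$-point of $G$ carrying $\hat{x}$ to $0$ via the valuative criterion, and factor it through the Iwahori decomposition $G(K)=G(\mc{O})\,T(K)\,G(\mc{O})$ to extract the one-parameter subgroup up to bounded factors; your sketch of this is the standard and correct route. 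Two small points to watch in a full write-up: the criterion requires $L$ ample (your embedding step uses this, while the proposition as stated in the paper leaves it implicit from the surrounding discussion), and since the paper takes limits as $t\to\infty$ rather than $t\to 0$, your identification of $\mu_\rho(\la,x)$ with ``minus the extremal weight'' must be the maximal rather than the minimal weight; this discrepancy with Mumford's convention is harmless only because the criterion quantifies over all one-parameter subgroups and $\la\mapsto\la^{-1}$ is a bijection, but the signs need to be tracked consistently when you pass to the stable case and the stabilizer condition.
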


Suppose $X=\PP(V)$ for a vector space $V$ and $G\subset \SL(V)$. Given a one-parameter subgroup $\la$ of $\SL(V)$ we find a splitting $V=\bigoplus_{i=1}^{k+1} V^i$ and weights $\gamma_1< \ldots<\gamma_{k+1}$ such that $\la(t)\cdot v=t^{\ga_i}v$ for $v\in V^i$. For a point $x\in \PP(V)$ represented by $f\in V^\vee$ one finds
\[
 \mu(\la,x)= -\min\{\ga_i\,|\,1\le i\le k+1: f_{|V^i}\neq 0\}\,.
\]

Consider the subspaces $V_j:=\bigoplus_{i=1}^j V^i$ and the positive rational numbers
\[
  \alpha_j:=\frac{\ga_{j+1}-\ga_j}{\dim(V)}\,, \qquad i=1,\ldots,k\,.
\]
The pair $(V_\bullet,\tup{\alpha})$ is the associated \emph{weighted flag} of $\la$ and $\len(V_\bullet):=k$ the \emph{length} of $V_\bullet$. Conversely, given the weighted flag $(V_\bullet,\tup{\alpha})$ one defines $\tup{\ga}(V_\bullet,\tup{\alpha})$ by
\[
 \ga_i(V_\bullet,\tup{\alpha}):=\sum_{j=1}^{\len(V_\bullet)} \alpha_j \dim(V_j)  -\sum_{j=i}^{\len(V_\bullet)} \alpha_j
\dim(V)\,, \qquad i=1,\ldots,\len(V_\bullet)+1
\]
There is a number $m\in \NN$ such that $\tup{\ga}:=\tup{\ga}(V_\bullet,m\tup{\alpha})$ is integral. The choice of a splitting of $V$ that is compatible with $V_\bullet$ then determines a one-parameter subgroup $\la$ with associated weighted flag $(V_\bullet,m\tup{\alpha})$. One can show that for $x\in \PP(V)$ the value of $\mu(\la,x)$ does not depend on the choice of $\la$ (see \cite[Proposition 2.7]{GIT}). Thus the function $\mu(V_\bullet,\tup{\alpha},x):=\mu(\la,x)/m$ is well defined.

\subsection{Moduli Spaces}
\label{subsec:moduli_spaces}
Let $\mc{A}$ be a set of objects. A \emph{moduli problem} for $\mc{A}$ consists of a category $\mc{F}$ fibered over $\cat{Sch}/\CC$ and an equivalence relation $\sim$ on $\mc{F}$ which is compatible with pullbacks, such that $\mc{F}(\id_\CC)/\!\sim\; =\mc{A}/\!\cong$. An object $\xi\in \mc{F}(S)$ is called a \emph{family of objects parameterized by $S$}. These data define the \emph{moduli functor}
\begin{align*}
 \cat{Sch}/\CC &\to \cat{Sets}\\
  S &\mapsto \mc{F}(S)/\!\sim\,.
\end{align*}

A scheme $M$ is a \emph{fine moduli space} if it represents the moduli functor. By the Yoneda-lemma this is equivalent to the existence of a universal family on $M$. Let $\sim_S$ be an equivalence relation on $\mc{A}$.
 A scheme $M$ is a \emph{coarse moduli space} if it corepresents (\cite[Definition 2.2.1]{HuyLehn}) the moduli functor and there is a natural bijection $ \mc{A}/\!\sim_S\; \to M(\CC)$.
 
A family $\xi$ parametrized by a scheme $P$ is said to satisfy the \emph{local universal property}, if for any family $\xi'$ parametrized by a scheme $S$ and any point $s\in S$ there is a neighborhood $U$ of $s$ and a morphism $f:U\to P$ such that $\xi'_{|U}\sim f^*\xi$. Suppose further that there is an action $\rho:G\times P\to P$ with the property that for any two morphisms $f_1,f_2:U\to P$ we have $f_1^*\xi\sim f_2^*\xi$ if and only if there exists a morphism $g:U\to G$ such that $g\cdot f_1=f_2$. Then, a scheme $M$ is a coarse moduli space of isomorphism classes if and only if $M$ is a geometric quotient of $P$.

If $M$ is the good quotient of $P$, but not a geometric quotient, one defines $\sim_S$ in the following way: two objects $E$ and $E'$ are \emph{S-equivalent} if there are points $p$, $p'\in P$ such that $\xi_{p}\cong E$, $\xi_{p'}\cong E'$ and $\overline{G\cdot p}\cap \overline{G\cdot p'}\neq \varnothing$.

\begin{prop} \label{prop:good_quot_is_coarse_moduli}
 The scheme $M$ is a coarse moduli space of S-equivalence classes if and only if $M$ is a good quotient of $P$.
\end{prop}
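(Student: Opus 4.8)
The plan is to prove both implications, the substantial one being that a good quotient is a coarse moduli space; the converse will then follow from the uniqueness of corepresenting objects. So assume first that $\pi:P\to M$ is a good quotient. I would begin by producing a natural transformation $\theta:\underline{\mc{F}}\to\Hom(-,M)$ out of the moduli functor. Given a family $\xi'$ on $S$, the local universal property yields an open cover $\{U_i\}$ of $S$ and morphisms $f_i:U_i\to P$ with $\xi'_{|U_i}\sim f_i^*\xi$, and I set $\theta_S(\xi')_{|U_i}:=\pi\circ f_i$. On overlaps the two restrictions satisfy $f_i^*\xi\sim f_j^*\xi$, so by hypothesis there is $g:U_i\cap U_j\to G$ with $g\cdot f_i=f_j$, whence $\pi\circ f_i=\pi\circ f_j$ by $G$-invariance of $\pi$; thus the local maps glue. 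The same argument gives independence of the chosen $f_i$ and compatibility with $\sim$, so $\theta$ is well defined and natural, and evaluating on $\xi$ via $\id_P$ gives $\theta_P(\xi)=\pi$.

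Next I would verify that $\theta$ corepresents $\underline{\mc{F}}$. The key input is that, writing $a,\pr:G\times P\to P$ for the action and the projection, the choice $g=\pr_G$ gives $g\cdot\pr=a$, so $\pr^*\xi\sim a^*\xi$. Now let $\eta:\underline{\mc{F}}\to\Hom(-,N)$ be any natural transformation and set $\phi:=\eta_P(\xi):P\to N$. Applying naturality of $\eta$ along $a$ and $\pr$ to the relation $\pr^*\xi\sim a^*\xi$ yields $\phi\circ a=\phi\circ\pr$, i.e.\ $\phi$ is $G$-invariant. Since a good quotient is a categorical quotient, $\phi$ factors uniquely as $\phi=\gamma\circ\pi$ for some $\gamma:M\to N$; a local computation, namely $\gamma\circ\theta_S(\xi')_{|U_i}=\gamma\circ\pi\circ f_i=\phi\circ f_i=\eta_{U_i}(f_i^*\xi)=\eta_S(\xi')_{|U_i}$, then shows $\eta=\gamma\circ\theta$, and uniqueness of $\gamma$ comes from that of the categorical quotient. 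Hence $M$ corepresents the moduli functor.

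It remains to establish the natural bijection $\mc{A}/\!\sim_S\,\to M(\CC)$, which I expect to be the main obstacle, since it is where the finer structure of good quotients enters. Every object $E$ is isomorphic to $\xi_p$ for some $p\in P(\CC)$ by the local universal property over $\Spec\CC$, and since good quotients are surjective every point of $M$ is attained; I would send $[E]\mapsto\pi(p)$. The crux is the equivalence
\[
 \pi(p)=\pi(p')\iff \overline{G\cdot p}\cap\overline{G\cdot p'}\neq\varnothing\,,
\]
whose ``$\Leftarrow$'' follows because $\pi$ is constant on orbit closures, and whose ``$\Rightarrow$'' follows from the fact that each fibre of a good quotient contains a unique closed orbit: were the two closures disjoint, they would be disjoint closed invariant sets with the same image, contradicting property (3). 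Combined with the observation that $\xi_p\cong\xi_{p'}$ forces $p,p'$ into a single orbit (apply the action hypothesis over $\Spec\CC$), this shows the assignment is well defined on $S$-equivalence classes, injective and surjective, so $M$ is a coarse moduli space of $S$-equivalence classes.

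For the converse I would argue by uniqueness. A corepresenting object is unique up to canonical isomorphism, so it suffices to know that a good quotient of $P$ exists, which in our application is guaranteed by the Geometric invariant theory of \autoref{prop:affine_morphism->quotient} and \autoref{prop:Mumford}. By the first part this good quotient is a coarse moduli space, hence canonically isomorphic to $M$, and therefore $M$ is itself a good quotient of $P$.
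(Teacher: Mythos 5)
The paper states this proposition without proof, treating it as a standard fact from the theory of moduli problems (cf.\ Huybrechts--Lehn or Newstead), so there is no in-paper argument to compare against; your write-up is essentially the standard proof and is correct. The forward direction is handled properly: the gluing of the local classifying maps via $G$-invariance of $\pi$, the $G$-invariance of $\eta_P(\xi)$ extracted from $\pr^*\xi\sim a^*\xi$ together with the categorical-quotient property, and the point-level bijection via property (3) of good quotients (disjoint orbit closures would have disjoint images) are exactly the right ingredients; you also correctly note that $\xi_p\cong\xi_{p'}$ forces $p,p'$ into one orbit, which is what makes the map on $S$-equivalence classes well defined given the paper's existential definition of $\sim_S$.

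One caveat worth making explicit: your converse is not a proof of the literal implication \enquote{$M$ coarse moduli space $\Rightarrow$ $M$ good quotient} in the abstract setting of \autoref{subsec:moduli_spaces}, because it presupposes that a good quotient of $P$ exists; without that input, uniqueness of corepresenting objects gives you nothing to compare $M$ with. This is harmless here --- the proposition is only ever invoked in the forward direction, and in the application the good quotient is supplied by \autoref{prop:affine_morphism->quotient} and \autoref{prop:Mumford} --- but you should state the existence of the quotient as a hypothesis of the converse (as the surrounding text of the paper implicitly does) rather than as something \enquote{guaranteed in our application}. You should also spell out that the canonical isomorphism $\gamma:M\to P/\!\!/G$ obtained from uniqueness satisfies $\gamma\circ\theta_P(\xi)=\pi$, so that $\theta_P(\xi)$ itself is a good quotient map; being abstractly isomorphic to a quotient is not quite the assertion.
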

\begin{rem}
 Richardsons proof of the Hilbert--Mumford criterion given in \cite{Birkes} shows that S-equivalence is generated by $\xi_p\sim_S \xi_{p'}$ for any $p\in P$ and $p'=\lim_{t\to\infty}\rho(\la(t),p)$ with $\mu_{\rho}(\la,p)=0$.
\end{rem}

\subsection{Homogeneous Representations}
A representation $\rho:\GL(V)\to \GL(W)$ is called \emph{homogeneous of degree $\deg(\rho)=\ga$} if $\rho(c\id_V)=c^\ga \id_W$ for all $c\in \CC^*$. If $\rho$ is homogeneous, then by \cite[Corollary 1.1.5.4]{schmitt08} there are non-negative integers $a,b,c$ such that $W$ is a subrepresentation of
\begin{equation} \label{eq:V_abc}
 V_{a,b,c}:=\left(V^{\otimes a}\right)^{\oplus b}\otimes \left(\bigwedge^{\dim(V)} V \right)^{\otimes -c}.
\end{equation}

For a point $[f]\in \PP(V_{a,b,c})$ and a weighted flag $(V_\bullet,\tup{\alpha})$ one finds
\begin{equation} \label{eq:stability_V_abc}
 \mu(V_\bullet,\tup{\alpha},[f])= -\min\left\{\sum_{j=1}^{\len(V_\bullet)}\alpha_j\big(a\dim(V_j)-\dim(V)\nu_j(\tup{i})\big) \biggm| \tup{i}\in I\,:\, f_{V^{\otimes\tup{i}}}\neq 0\right\} 
\end{equation}
where $I:=\{1,\ldots,\len(V_\bullet)+1\}^a$, $V^{\otimes \tup{i}}:=\bigoplus_{j=1}^a V_{i_j}$ for $\tup{i}\in I$ and $\nu_j(\tup{i}):=\#\{i\in \tup{i}|i\le j\}$.
In particular we have the estimate (see \cite[Lemma 1.5.1.41]{schmitt08})
\begin{equation} \label{eq:estimate_V_abc}
   \mu(V_\bullet, \tup{\alpha}+\tup{\beta},x) \ge \mu(V_\bullet,\tup{\alpha},x) - a\sum_{i=1}^{\len(V_\bullet)} \beta_j \dim(V_j) \,.
\end{equation}

\subsection{Associated Vector Bundles and Weighted Flags}
Let $E$ be a vector bundle of rank $r$ and $\rho:\GL(r)\to \GL(V)$ a representation. We denote by $P(E)$ the frame bundle $\opname{Iso}(\mc{O}_X^{\oplus r},E)$ and by $E_\rho$ the associated bundle $P(E)\times^\rho V$.

\begin{mydef} \label{def:weighted_flag}
 A \emph{weighted flag} of a vector bundle $E$ is pair $(E_\bullet,\tup{\alpha})$ consisting of a flag
 \[
  \{0\}=E_0\subset E_1\subset \cdots \subset E_k\subset E_{k+1}=E
 \]
of length $\len(E_\bullet)=k$ and a $k$-tuple $\tup{\alpha}$ of positive rational numbers.
\end{mydef}

Suppose that $\rho$ is homogeneous. Given a weighted flag $(E_\bullet,\tup{\alpha})$ we choose a flag $W_\bullet$ of $\CC^r$ of length $\len(W_\bullet)=\len(E_\bullet)$ with $\dim(W_i)=\rk(E_i)$ for $1\le i \le \len(E_\bullet)$. If $m$ is an integer such that $m\tup{\alpha}$ is integral, then, as described at the end of \autoref{subsec:GIT}, there is a one-parameter subgroup $\la$ of $\SL(r)$ with associated weighted flag $(W_\bullet,m\tup{\alpha})$. The one-parameter subgroup $\rho\circ \la$ of $\SL(V)$ now determines a weighted flag $(V_\bullet,\tup{\beta})$ of $V$.

One can find an open subset $U\subset X$ and a trivialization $\psi:E_{|U}\to U\times \CC^r$ such that $\psi(E_{i|U})=U\times W_i$ for $1\le i\le \len(E_\bullet)$. The trivialization $\psi$ induces an associated trivialization $\psi_\rho:E_{\rho|U}\to U\times V$. The flag of $E_{\rho|U}$ defined by $\psi_\rho^{-1}(U\times V_i)$ for $1\le i \le \len(V_\bullet)$ extends uniquely to a flag $F_\bullet$ of $E_\rho$.
\begin{mydef}
We define the \emph{associated weighted flag $(E_{\rho,\bullet},\tup{\alpha}_\rho)$ of $E_\rho$ induced by $(E_\bullet,\tup{\alpha})$ and $\rho$} by $E_{\rho,\bullet}:=F_\bullet$ and $\tup{\alpha}_\rho:=(1/m)\tup{\beta}$.
\end{mydef}

%%%%%%%%%%%%%%%%%%%%%%%%%%%%%%%%%%%%%%%
\section{Stable Decorated Swamps}
In this section we introduce the objects that we want to classify, define notions of stability,  S-equivalence and parameterized families and finally state the existence of the coarse moduli space.

\subsection{Decorated Swamps}
Let $X$ be a smooth projective curve over $\CC$ of genus $g$.
Fix an integer $d$, a natural number $r$ and two homogeneous representations $\rho:\GL(r)\to \GL(V_1)$ and $\sigma:\GL(r)\to \GL(V_2)$.
\begin{mydef}
 A \emph{decorated swamp} of \emph{type $(d,l)$} is a tuple $(E,L,\phi,s)$, where $E$ is vector bundle of rank $r$ and degree $d$, $L$ is a line bundle of degree $l$, $\phi:E_\rho\to L$ is a non-trivial homomorphism and $s$ is a point in $E^{\vee}_{\sigma|\{x_0\}}$. 

Two decorated swamps $(E,L,\phi,s)$ and $(E',L',\phi',s')$ are considered \emph{isomorphic} if there are isomorphisms $f:E\to E'$, $\psi:L\to L'$ and a number $c\in \CC^*$ with $\phi'\circ f_\rho=\psi\circ \phi$ and $ s\circ f_{\sigma|\{x_0\}}=c\cdot s$. Here $f_\rho:E_\rho\to E'_\rho$ and $f_\sigma:E_\sigma\to E'_\sigma$ are the isomorphisms induced by $f$.
\end{mydef}

\subsection{Stability}
Let $(E,L,\phi,s)$ be a decorated swamp. Stability is going to be tested for weighted flags $(E_\bullet,\tup{\alpha})$ of $E$. We define 
\[
 M(E_\bullet,\tup{\alpha}):=\sum_{j=1}^{\len(V_\bullet)}\alpha_j\left(\deg(E)\rk(E_j)-\deg(E_j)\rk(E) \right)\,.
\]

Denote by $\eta$ be the generic point of $X$ and by $K$ the function field of $X$.
The representation $\rho$ and the flag $(E_\bullet,\tup{\alpha})$ induce an associated weighted flag $(E_{\rho,\bullet},\tup{\alpha}_\rho)$ of $E_\rho$. Restricting to the generic point $\eta$ yields a flag $\EE_{\rho,\bullet}$ of the $K$-vector space $\EE_\rho:=E_{\rho|\eta}$. Finally, the homomorphism $\phi$ defines a point $[\phi_\eta]\in \PP(\EE_\rho)$. We set
\[
 \mu_1(E_\bullet,\tup{\alpha},\phi):=\mu(\EE_{\rho,\bullet},\tup{\alpha}_\rho,[\phi_{\eta}])\,.
\]
\begin{rem}
 This is the same function as in \cite[Section 2.3.2]{schmitt08}.
\end{rem}

The representation $\sigma$ induces a weighted flag $(E_{\sigma,\bullet},\tup{\alpha}_\sigma)$ of $E_\sigma$. Its restriction to $x_0$ yields a flag of $E_{\sigma|\{x_0\}}$ and we set
\[                                                                                                                                                         
  \mu_2(E_\bullet,\tup{\alpha},s):=\mu(E_{\sigma,\bullet|\{x_0\}},\tup{\alpha}_\sigma,[s])\,.                                                                                                                                                       
\]

\begin{mydef} \label{def:stability}
Let $\delta_1,\delta_2\in\QQ_{>0}$. We call a decorated swamp $(E,L,\phi,s)$ \emph{$(\delta_1,\delta_2)$-\textup{(}semi\nobreakdash-\textup{)}stable} if the relation
 \begin{equation}
  M(E_\bullet,\tup{\alpha}) + \delta_1 \mu_1(E_\bullet,\tup{\alpha},\phi)
+ \delta_2\mu_2(E_\bullet,\tup{\alpha},s) (\ge) 0
 \end{equation}
 holds for any weighted filtration $(E_\bullet,\tup{\alpha})$ of $E$.
\end{mydef}

\begin{rem} \label{rem:mu_1,2}
 Since we assume $\rho$ and $\sigma$ to be homogeneous, there are numbers $a_m,b_m,c_m$, such that the $\GL(r)$-module $V_m$ is a direct summand of $(\CC^r)_{a_m,b_m,c_m}$, $m=1,2,$ (see \eqref{eq:V_abc}).
 We thus have surjections $p_1:E_{a_1,b_1,c_1}\to E_\rho$ and $p_2:E_{a_2,b_2,c_2}\to E_{\sigma}$.
 
 For $m=1,2$ and a tuple $\tup{i}\in
I_m:=\{1\,\ldots,k\}^{a_m}$ we define
 \[
  E^{\otimes\tup{i}}:=(E_{i_1}\otimes \cdots \otimes
E_{i_{a_m}})^{\oplus b_m}\otimes \left(\bigwedge^r
E\right)^{\otimes -c_m} \subset E_{a_m,b_m,c_m}\,.
 \]
 Using \eqref{eq:stability_V_abc} we easily find
 \begin{align} \label{eq:tmp_stab1}
  \mu_1(E_\bullet,\tup{\alpha},\phi)&=-\min\left\{\sum_{j=1}^l
\alpha_j(a_1\rk(E_j)-r \nu_j(\tup{i}))
\Bigm|\,(\phi\circ p_1)_{|E^{\otimes\tup{i}}} \neq 0 \,,\tup{i}\in I_1 \right\},\\
  \label{eq:tmp_stab2}
  \mu_2(E_\bullet,\tup{\alpha},s)&=-\min\left\{ \sum_{j=1}^l
\alpha_j(a_1\rk(E_j)-r \nu_j(\tup{i}))  \,\Bigm|\,
(s\circ p_2)_{|E_{|\{x_0\}}^{\otimes\tup{i}}}\neq 0\,, \tup{i}\in I_2\right\},
 \end{align}
with $r=\rk(E)$ and $\nu_j(\tup{i}):=\#\{i\in \tup{i}\,|\,i\le j \}$.
\end{rem}

\subsection{S-Equivalence}
\begin{mydef}
Let $(E,L,\phi,s)$ be a $(\delta_1,\delta_2)$-semistable decorated swamp. We call
 a weighted flag $(E_\bullet,\tup{\alpha})$ \emph{critical} if
\begin{equation*}
  \label{eq:swamp_M=0}
 M(E_\bullet,\tup{\alpha})+\delta_1\mu_1(E_\bullet,\tup{\alpha},
\phi)+\delta_2\mu_2(E_\bullet,\tup{\alpha})=0\,.
\end{equation*}
\end{mydef}
Let $\la$ be the one-parameter subgroup of $\GL(r,\CC)$ defined by the standard flag of $\CC^r$ and the weight vector $\tup{\alpha}$. The datum of $E_\bullet$ corresponds to a reduction of the structure group $\beta:X\to P(E)/Q(\la)$. Let $P'$ be the $Q(\la)$-bundle $\beta^*P(E)$. Let $\pi:Q(\la)\to L(\la)$ be the projection to the Levi factor. We define $E_{\gr}$ to be the associated vector bundle $\pi_*P'\times^{\GL(r)}\CC^r$. Note that 
$E_{\gr}= \bigoplus_{i=0}^k E_{i+1}/E_i$.

The one-parameter subgroup $\rho_*\la$ of $\GL(V_1)$ defines a weighted flag $(V^{(1)},\tup{\alpha}^{(1)})$ of $V_1$. Since $\rho(Q(\la))\subset Q(\rho_*\la)$ the reduction $\beta$ also defines a flag $E^{(1)}_\bullet$ of $E_\rho\cong \rho_*P'\times^{\GL(V_1)}V_1$. The image of the Levi factor $L(\la)$ is contained in $L(\rho_*\la)$ of $Q(\rho_*\la)$. Let $\pi_\rho:Q(\rho_*\la)\to L(\rho_*\la)$ be the projection. Since $\rho\circ \pi=\pi_\rho\circ\rho$, we find that $\rho_*\pi_*P'\cong \pi_{\rho*}\rho_*P'$. Hence we get
\[
 E_{\gr,\rho}= \bigoplus_{i=1}^{k+1} E^{(1)}_i/E^{(1)}_{i-1}\,.
\]
Similarly, the flag $E_\bullet$ defines weights $\tup{\alpha}^{(2)}$ and a flag $E^{(2)}_\bullet$ of $E_{\sigma}$ such that
\[
 E_{\gr,\sigma}= \bigoplus_{i=1}^{k+1} E^{(2)}_i/E^{(2)}_{i-1}\,.
\]

Set $i_0:=\min\{i\in \{1,\ldots,k+1\}\,|\,\phi_{|E^{(1)}_i}\neq 0\}$ and $j_0:=\min\{j\in \{1,\ldots,k+1\}\,|\,s_{|E^{(2)}_j}\neq 0\}$. Then the restrictions $\phi_{|E^{(1)}_{i_0}}$ and $s_{|E^{(2)}_{j_0}}$ define non-trivial homomorphisms
\begin{align*}
 \phi_0:E^{(1)}_{i_0}/E^{(1)}_{i_0-1}\to L \,, \qquad s_0:E^{(2)}_{j_0}/E^{(2)}_{j_0-1}\to \CC\,.
\end{align*}
Let $\phi_\gr:E_{\gr,\rho}\to L$ and $s_\gr:E_{\gr,\sigma}\to \CC$ be the compositions of the natural projections with these homomorphisms. We call
\[
 \opname{df}_{(E_\bullet,\tup{\alpha})}(E,L,\phi,s):=(E_{\gr},L,\phi_{\gr},s_{\gr})\,.
\]
the \emph{admissible deformation} of $(E,L,\phi,s)$ along $(E_\bullet,\tup{\alpha})$.

\begin{mydef} \label{def:S-equivalence}
We define \emph{S-equivalence} as the equivalence relation generated by isomorphisms and the relations
\[
 (E,L,\phi,s)\sim_S \opname{df}_{(E_\bullet,\tup{\alpha})}(E,L,\phi,s)
\]
for all critical flags $(E_\bullet,\tup{\alpha})$.
\end{mydef}

\subsection{Parameterized Families}
In order to define a moduli functor we need to introduce the notion of a parameterized family: Denote by $\Jac^l(X)$ the Jacobian of line bundles of degree $l$ on $X$ and fix a Poincar\'e bundle $\mc{L}$ on $\Jac^l\times X$.
\begin{mydef}
 A \emph{family of decorated swamps of type $(d,l)$} parameterized by a scheme $S$ is a tuple $D_S=(E_S,\ka_S,N_{1,S},N_{2,S},\phi_S,s_S)$ consisting of
 \begin{itemize}
  \item  a vector bundle $E_S$ of rank $r$ on $S\times X$, such that for any point $s\in S$ the restriction of $E_S$ to $\{s\}\times X$ is of degree $d$,
  \item a morphism $\kappa_S:S\to \Jac^l$,
  \item a line bundle $N_{1,S}$ on $S$,
  \item a homomorphism $\phi_S:E_{S,\rho}\to \pr_S^*N_{S}\otimes(\ka_S\times\id_X)^*\mc{L}$ such that for any $s\in S$ the restriction to $\{s\}\times X$ is non-trivial
  \item and a surjective homomorphism $s_S:E_{S,\sigma|S\times\{x_0\}}\to N_{2,S}$.
 \end{itemize}

Two families $D_S$ and $D'_S$ are considered \emph{isomorphic}, if $\ka_S=\ka'_S$ holds and there are
\begin{itemize}
 \item a line bundle $L_S$ on $S$ and an isomorphism $f:E_S\to E'_S\otimes\pr_S^*L_S$,
 \item an isomorphism $h_1:N_{1,S}\to N'_{1,S}\otimes L_S^{\otimes \deg(\rho)}$ with 
 \[
 (\phi'_S\otimes \id_{L_S^{\otimes \deg(\rho)}})\circ f_\rho=(\pr_S^*h_1 \otimes\id_{(\ka_S\otimes\id_X)^*\mc{L}})\circ\phi_S\,,
 \]
 \item and an isomorphism $h_2:N_{2,S}\to N'_{2,S}\otimes L^{\deg(\sigma)}_S$ such that
  \[
(s'_S\otimes \id_{L_S^{\otimes \deg(\sigma)}})\circ f_{\sigma|S\times\{x_0\}}=h_2\circ s_S\,.
\]
\end{itemize}
\end{mydef}

Note that for a point $s\in S$ a family $D_S$ defines a decorated swamp
\[
D_{S|s}:=(E_{S|\{s\}\times X},(\ka(s)\times\id_X)^*\mc{L},\phi_{S|\{s\}\times X},s_{S|\{s\}}).
\]
\begin{mydef}
A family $D_S$ parameterized by $S$ is called \emph{$(\delta_1,\delta_2)$-\textup{(}semi\nobreakdash-\textup{)}stable} if for any $s\in S$ the restriction $D_{S|s}$ is $(\delta_1,\delta_2)$-(semi\nobreakdash-)stable.
\end{mydef}

We are now ready to state the main result of this work. Recall that in \autoref{rem:mu_1,2} we have fixed numbers $a_2,b_2,c_2$ such that the representation $\sigma$ is a direct summand of the natural representation on $(\CC^r)_{a_2,b_2,c_2}$.

\begin{thm} \label{main_thm}
 For $a_2\delta_2<1$ the \textup{(}projective\textup{)} coarse moduli space  
 of $(\delta_1,\delta_2)$-\textup{(}semi\nobreakdash-\textup{)}stable decorated swamps of type $(d,l)$ exists.
\end{thm}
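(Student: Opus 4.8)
\emph{Plan of proof.} The strategy is to realise the moduli space as a GIT quotient of a parameter space, following the general machine for swamps in \cite{schmitt08}. I would first establish boundedness of the underlying bundles. Testing $(\delta_1,\delta_2)$-semistability against a one-step flag $0\subset F\subset E$ and bounding $\mu_1,\mu_2$ from above by means of \eqref{eq:tmp_stab1} and \eqref{eq:tmp_stab2} (each is at most $\alpha\, a_m(r-\rk F)$, attained for the tuple with $\nu\equiv a_m$), the defining inequality $M+\delta_1\mu_1+\delta_2\mu_2\ge 0$ gives
\[
 r\deg(F)\le d\rk(F)+\delta_1 a_1\bigl(r-\rk(F)\bigr)+\delta_2 a_2\bigl(r-\rk(F)\bigr),
\]
an upper bound on $\deg(F)$ for every saturated subsheaf, so by Grothendieck's theorem the family of underlying bundles is bounded. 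Hence there is $n\gg 0$ with $H^1(E(n))=0$, $E(n)$ globally generated and $h^0(E(n))=p:=d+r(n+1-g)$ for every semistable $E$. Fixing $Y$ with $\dim Y=p$, I would take the $\Quot$ scheme of quotients $Y\otimes\mc{O}_X(-n)\twoheadrightarrow E$ with the correct Hilbert polynomial, restrict to the open subscheme $\Quot^0$ where $E$ is locally free and $H^0(n)$ is an isomorphism, and over $\Quot^0\times\Jac^l$ form the projective bundle of homomorphisms $\tilde E_\rho\to\pr^*\mc{L}$ (for $\phi$) and the projective bundle of $\tilde E^{\vee}_{\sigma}$ at $x_0$ (for $s$). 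Their fibre product, cut down by the non-triviality of $\phi$ and the surjectivity of $s$, is the parameter space $\mathfrak P$; it carries a tautological family with the local universal property and a $\GL(Y)$-action (through $\PGL(Y)$) compatible with isomorphism of decorated swamps.

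Next I would construct an equivariant Gieseker morphism. The bundle $E$ contributes the classical Gieseker point in $\PP\bigl(\bigwedge^r Y^{\vee}\otimes H^0(\mc{O}_X(rn)\otimes A)^{\vee}\bigr)$ for a suitable very ample $A$; the global datum $\phi$ contributes a point in a projective space built from $V_1$ as in \cite{schmitt08}; and the local datum $s$ contributes, via the fibre at $x_0$, a point of $\PP(V_2^{\vee})$. Weighting these three factors by $1$, $\delta_1$ and $\delta_2$ produces a $\GL(Y)$-equivariant morphism $\Gies\colon\mathfrak P\to\mathbf P$ into a product of projective spaces carrying an ample linearisation $L_{\delta_1,\delta_2}$, which is injective and proper on the relevant locus.

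The heart of the argument is the Hilbert--Mumford comparison, which I expect to be the main obstacle. A one-parameter subgroup $\la$ of $\SL(Y)$ corresponds to a weighted flag $(Y_\bullet,\tup\alpha)$ of $Y$, and for $n$ large the saturations of the subsheaves generated by $Y_i\otimes\mc{O}_X(-n)$ give a weighted flag $(E_\bullet,\tup\alpha)$ with $Y_i=H^0(E_i(n))$; conversely every weighted flag of $E$ arises this way. Using the explicit weight formula \eqref{eq:stability_V_abc} together with \eqref{eq:tmp_stab1} and \eqref{eq:tmp_stab2}, I would show that for $\xi\in\mathfrak P$ representing $(E,L,\phi,s)$,
\[
 \mu_{L_{\delta_1,\delta_2}}\bigl(\la,\Gies(\xi)\bigr)
 =C\cdot\bigl(M(E_\bullet,\tup\alpha)+\delta_1\mu_1(E_\bullet,\tup\alpha,\phi)+\delta_2\mu_2(E_\bullet,\tup\alpha,s)\bigr)+(\textnormal{error}),
\]
for a positive constant $C$ and an error term controlled uniformly over the bounded family once $n$ is large. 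The condition $a_2\delta_2<1$ enters precisely here: the local term $\mu_2$ is measured at the single point $x_0$ and so escapes the cohomological identification of flags, and the estimate \eqref{eq:estimate_V_abc} applied with $a=a_2$ shows that $a_2\delta_2<1$ forces this fibrewise contribution to stay dominated by the global Gieseker weight, so that the sign of $\mu_{L_{\delta_1,\delta_2}}$ is governed by the decorated-swamp expression. This step is delicate because it couples the asymptotic matching of flags, the uniform control of the error, and the separate treatment of the term at $x_0$; it also underlies the properness needed for projectivity via the valuative criterion.

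Finally I would assemble the quotient. By the comparison, $\Gies^{-1}(\mathbf P^{\textnormal{(s)s}})$ is exactly the $(\delta_1,\delta_2)$-(semi-)stable locus $\mathfrak P^{\textnormal{(s)s}}$. Applying \autoref{prop:Mumford} to $(\mathbf P,L_{\delta_1,\delta_2})$ and descending through the Gieseker morphism by \autoref{prop:affine_morphism->quotient} (which is proper, and injective on the stable locus) yields the good quotient $\mathfrak P^{\textnormal{ss}}/\!\!/\GL(Y)$, projective since $\mathbf P$ is projective and $L_{\delta_1,\delta_2}$ is ample, together with a geometric quotient on $\mathfrak P^{\textnormal{s}}$. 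Because the tautological family has the local universal property and orbits correspond to isomorphism classes, \autoref{prop:good_quot_is_coarse_moduli} identifies this quotient with the coarse moduli space; the remark after it, combined with the description of admissible deformations, shows that the resulting $S$-equivalence agrees with \autoref{def:S-equivalence}. This produces the desired projective coarse moduli space.
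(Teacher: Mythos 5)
Your plan matches the paper's proof in all essentials: boundedness from one-step flags, a Quot-scheme parameter space carrying tautological $\phi$ and $s$, an injective equivariant Gieseker morphism into a product of projective spaces linearized with weights proportional to $\bigl(p(n)-a_1\delta_1-a_2\delta_2,\,r\delta_1,\,r\delta_2\bigr)$, a Hilbert--Mumford comparison (which the paper organizes via an intermediate ``section stability'' and two one-sided inequalities rather than a single equality with error term), properness of the Gieseker morphism on the semistable locus, and descent of the GIT quotient via \autoref{prop:affine_morphism->quotient} and \autoref{prop:good_quot_is_coarse_moduli}. The hypothesis $a_2\delta_2<1$ enters exactly where you say --- in dominating the fibrewise term at $x_0$ against the torsion of the subsheaves generated by $Y_\bullet$ and again in the properness argument --- the only small correction being that the third Gieseker factor must be $\PP(Y_{a_2,b_2})$, built from $Y$ so that $\GL(Y)$ acts on it, rather than $\PP(V_2^\vee)$.
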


The rest of this article is devoted to the construction of this moduli space. The proof of the theorem is given in \autoref{sec:proof_of_main_thm}.

%%%%%%%%%%%%%%%%%%%%%%%%%%%%%%%%%%%%%%%
\section{The Parameter Space}
As a first step towards the construction of the moduli space we construct a parameter space with a locally universal family of decorated swamps. In order to be able to apply GIT we then construct an equivariant morphism from this parameter space into a projective space.

\subsection{Construction of the Parameter Space}
We start with a result on boundedness.
\begin{prop} \label{prop:swamps_bounded}
 There is a constant $C$ such that for any $(\delta_1,\delta_2)$-semistable decorated swamp
$(E,L,\phi,s)$ any non-trivial proper subbundle $F\subset E$ satisfies
 \[
  \mu(F)< \mu(E)+C\,.
\]
\end{prop}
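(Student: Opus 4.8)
The plan is to test $(\delta_1,\delta_2)$-semistability against a single, carefully chosen weighted flag and to read the bound directly off the resulting inequality. First I would attach to the given proper subbundle $F\subset E$ the length-one flag $E_\bullet\colon \{0\}\subset F\subset E$ with weight $\tup{\alpha}=(1)$; since $M$, $\mu_1$ and $\mu_2$ are all homogeneous of degree one in the weight, fixing $\alpha_1=1$ costs nothing. Writing $r_F:=\rk(F)$ and $r:=\rk(E)$, the elementary identity $\deg(E)\rk(F)-\deg(F)\rk(E)=r_F\,r\,(\mu(E)-\mu(F))$ gives
\[
 M(E_\bullet,\tup{\alpha})=r_F\,r\,\bigl(\mu(E)-\mu(F)\bigr),
\]
so that $\mu(F)-\mu(E)=-M(E_\bullet,\tup{\alpha})/(r_F r)$ is exactly the quantity to be controlled.

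The heart of the argument is to bound $\mu_1$ and $\mu_2$ from above. Here I would use the explicit formulas \eqref{eq:tmp_stab1} and \eqref{eq:tmp_stab2} from \autoref{rem:mu_1,2}. For a length-one flag the index sets are $I_m=\{1,2\}^{a_m}$ and $\nu_1(\tup{i})=\#\{i\in\tup{i}\mid i=1\}\le a_m$. Since $\phi$ and $s$ are non-trivial, each minimum ranges over a non-empty set of admissible tuples; evaluating the minimand at an admissible $\tup{i}$ and using $0\le\nu_1(\tup{i})\le a_m$ turns the two formulas into
\[
 \mu_1(E_\bullet,\tup{\alpha},\phi)\le a_1\,(r-r_F), \qquad \mu_2(E_\bullet,\tup{\alpha},s)\le a_2\,(r-r_F).
\]
This combinatorial step -- extracting the correct upper bound from the $-\min\{\cdots\}$ expressions while keeping the signs straight -- is the only point that requires genuine care; everything else is arithmetic.

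Finally I would insert these estimates into the semistability inequality $M(E_\bullet,\tup{\alpha})+\delta_1\mu_1+\delta_2\mu_2\ge 0$, which rearranges to $-M(E_\bullet,\tup{\alpha})\le \delta_1\mu_1+\delta_2\mu_2$, and divide by $r_F r$ to obtain
\[
 \mu(F)-\mu(E)\le \frac{(r-r_F)\,(\delta_1 a_1+\delta_2 a_2)}{r_F\,r}=\Bigl(\tfrac{1}{r_F}-\tfrac{1}{r}\Bigr)(\delta_1 a_1+\delta_2 a_2).
\]
Since $1\le r_F\le r-1$ the factor $\tfrac{1}{r_F}-\tfrac{1}{r}$ is maximal at $r_F=1$, where it equals $\tfrac{r-1}{r}<1$; hence $\mu(F)-\mu(E)<\delta_1 a_1+\delta_2 a_2$. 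Setting $C:=\delta_1 a_1+\delta_2 a_2$ -- a constant depending only on the fixed data $r,\rho,\sigma,\delta_1,\delta_2$ and not on the decorated swamp or on $F$ -- yields the assertion. I note in passing that this boundedness holds for all positive $\delta_1,\delta_2$; the hypothesis $a_2\delta_2<1$ of \autoref{main_thm} is not needed at this stage.
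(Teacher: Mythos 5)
Your proposal is correct and follows essentially the same route as the paper: test semistability against the single weighted flag $\{0\}\subset F\subset E$ with weight $(1)$, bound $\mu_1$ and $\mu_2$ via the explicit formulas of \autoref{rem:mu_1,2} using $\nu_1(\tup{i})\le a_m$, and rearrange to get $\mu(F)\le\mu(E)+(\delta_1a_1+\delta_2a_2)\tfrac{r-1}{r}$. The only quibble is phrasing: to bound $-\min\{\cdots\}$ from above you should bound the minimand from below uniformly over \emph{all} tuples (via $\nu_1(\tup{i})\le a_m$), rather than ``evaluate at an admissible $\tup{i}$'', which by itself only gives a lower bound on $\mu_m$ --- but the inequality you state is the right one.
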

\begin{proof}
 Consider the weighted flag $(0\subset F\subset E,(1))$ of $E$. Semistability implies
 \[
\deg(E)\rk(F)-\deg(F)\rk(E)+\delta_1\mu_1(E_\bullet,\tup{\alpha},\phi)+\delta_1\mu_2(E_\bullet,\tup{\alpha},s) \ge
0\,.
 \]
 Using \eqref{eq:tmp_stab1} and \eqref{eq:tmp_stab2} we find
 \[
 \mu(F)\le \mu(E)+(\delta_1a_1 +
\delta_2a_2)\frac{r-1}{r}\,.
\]
\end{proof}
By standard arguments it follows that there is a natural number $n_0$ such that for $n\ge n_0$ and a $(\delta_1,\delta_2)$-semistable decorated swamp $(E,L,\phi,s)$ we have $h^1(E(n))=0$ and $E(n)$ is globally generated. We fix $n\ge n_0$ and set $p(n):=d+r(n+1-g)$ and $Y:=\CC^{p(n)}$. 
\begin{mydef}
A \emph{family of decorated quotient swamps} of type $(d,l)$ parameterized by a scheme $S$ is a tuple $(q_S,\ka_S,N_{1,S},N_{2,S},\phi_S,s_S)$ where $q_S:\pr_X^*\mc{O}_X(-n) \otimes Y\to E_S$ is a vector bundle quotient on $S\times X$ such that $(E_S,\ka_S,N_{1,S},N_{2,S},\phi_S,s_S)$ is a family of decorated swamps of type $(d,l)$ and  
\[
 \pr_{S *}(q_s\otimes\id_{\pr_X^*\mc{O}_X(n)}):Y\otimes\mc{O}_S\to \pr_{S *}(E_S\otimes\pr_X^*\mc{O}_X(n))
\]
 is an isomorphism.
\end{mydef}

These families define the moduli functor of decorated quotient swamps.
\begin{prop} \label{prop:parameter_space}
 The fine moduli space of decorated quotient swamps of type $(d,l)$ exists.
\end{prop}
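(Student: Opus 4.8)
The plan is to realise $P$ as a tower of classical moduli constructions, rigidifying one component of the datum at a time: the quotient $q_S$ removes all automorphisms of $E$, the Jacobian absorbs $L$, and two projective bundles absorb $\phi$ and $s$ together with their target line bundles $N_{1,S},N_{2,S}$. \textbf{Step 1 (the quotient).} By \autoref{prop:swamps_bounded} and the choice $n\ge n_0$, every semistable $E(n)$ is globally generated with $h^1(E(n))=0$, so $E$ has Hilbert polynomial $P(m)=d+r(m+1-g)$. I would take Grothendieck's projective Quot scheme $\mathfrak{Q}:=\Quot$ of quotients of $\pr_X^*\mc{O}_X(-n)\otimes Y$ with Hilbert polynomial $P$, together with its universal quotient $\tilde q:\pr_X^*\mc{O}_X(-n)\otimes Y\to\tilde E$ on $\mathfrak{Q}\times X$, and pass to the open subscheme $\mathfrak{Q}_0\subset\mathfrak{Q}$ on which $\tilde E$ is locally free, $R^1\pr_{\mathfrak{Q}*}(\tilde E(n))=0$, and $Y\otimes\mc{O}_{\mathfrak{Q}}\to\pr_{\mathfrak{Q}*}(\tilde E(n))$ is an isomorphism. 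Each of these is an open condition (torsion-freeness, semicontinuity, and non-vanishing of a determinant), and by the universal property of $\mathfrak{Q}$ the restriction of $\tilde q$ to $\mathfrak{Q}_0$ is universal for the quotient-with-framing part of the datum.

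\textbf{Step 2 (line bundle and global decoration).} Set $B:=\mathfrak{Q}_0\times\Jac^l$, pull back $\tilde E$ and the Poincaré bundle $\mc{L}$ to $B\times X$, and let $\ka^{\mathrm{univ}}$ be the projection to $\Jac^l$; this handles $L$. To encode $\phi$ I follow the swamp construction of \cite{schmitt08}. Applying $(-)_{a_1,b_1,c_1}$ to $\tilde q$ and composing with $p_1$ gives a surjection $\pr_X^*\mc{G}\twoheadrightarrow\tilde E_\rho$ from the \emph{fixed} bundle $\mc{G}:=(\mc{O}_X(-n)\otimes Y)_{a_1,b_1,c_1}$, so every $\phi$ pulls back to a section of $\pr_X^*\mc{G}^\vee\otimes\mc{L}$. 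For $n\ge n_0$ the degree of $\mc{G}^\vee$ is large, hence $\mc{W}_1:=\pr_{B*}(\pr_X^*\mc{G}^\vee\otimes\mc{L})$ is locally free and commutes with base change. I form the projective bundle $\PP(\mc{W}_1^\vee)$ of lines in $\mc{W}_1$, with tautological line bundle $N_1^{\mathrm{univ}}$ and tautological homomorphism $\pr_X^*\mc{G}\to N_1^{\mathrm{univ}}\otimes\mc{L}$; the locus where this map annihilates $\ker(\pr_X^*\mc{G}\to\tilde E_\rho)$ is closed, and over it the tautological map descends to a genuine non-trivial $\phi^{\mathrm{univ}}:\tilde E_\rho\to N_1^{\mathrm{univ}}\otimes\mc{L}$.

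\textbf{Step 3 (local decoration and assembly).} The point $s$ is far easier: restricting $\tilde E_\sigma$ to the section $B\times\{x_0\}$ gives a vector bundle $\mc{W}_2:=\tilde E_{\sigma|B\times\{x_0\}}$ on $B$, and the projectivisation $\PP(\mc{W}_2)=\opname{Proj}(\opname{Sym}^*\mc{W}_2)$ carries a tautological surjection $s^{\mathrm{univ}}:\pr^*\mc{W}_2\to N_2^{\mathrm{univ}}$ onto the tautological quotient line bundle. This is exactly a surjection up to rescaling of the target, matching the freedom in the isomorphism relation. Let $P$ be the closed subscheme of $\PP(\mc{W}_1^\vee)\times_B\PP(\mc{W}_2)$ cut out by the factoring condition of Step 2; it carries the tuple $(q^{\mathrm{univ}},\ka^{\mathrm{univ}},N_1^{\mathrm{univ}},N_2^{\mathrm{univ}},\phi^{\mathrm{univ}},s^{\mathrm{univ}})$, a family of decorated quotient swamps. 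For representability, given any family $(q_S,\ka_S,N_{1,S},N_{2,S},\phi_S,s_S)$ over $S$ I obtain a unique $S\to\mathfrak{Q}_0$ from $q_S$ and the framing isomorphism, the map $\ka_S$ to $\Jac^l$, and unique lifts to $\PP(\mc{W}_1^\vee)$ and $\PP(\mc{W}_2)$ from $(N_{1,S},\phi_S)$ and $(N_{2,S},s_S)$ by the universal property of projective bundles; these assemble into a unique $S\to P$ pulling back the universal family up to the prescribed isomorphism, so $P$ is the fine moduli space.

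The main obstacle is the middle step, i.e. parameterising $\phi$. One cannot work directly with $\pr_{B*}\mc{H}om(\tilde E_\rho,\mc{L})$, since at the fixed degrees of $E_\rho$ and $L$ the cohomology of $E_\rho^\vee\otimes L$ can jump, so this pushforward need not be locally free or base-change compatible; the passage through the fixed bundle $\mc{G}$ (whose dual has large degree for $n\gg0$) is precisely what cures this, at the cost of having to impose the \emph{factoring through $E_\rho$} condition as an honest closed subscheme over which the tautological homomorphism is the universal $\phi$. This is exactly the delicate part of the swamp construction in \cite[Section~2.3]{schmitt08}; the new local datum $s$ contributes only the harmless projective bundle $\PP(\mc{W}_2)$ and adds no further difficulty.
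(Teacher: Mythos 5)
Your overall architecture coincides with the paper's: Quot scheme with the framing condition, a factor $\Jac^l$ for $L$, a projective bundle with a closed ``descent'' condition to house $(\phi,N_1)$, and the projective bundle $\PP(E_{\sigma}|_{\{x_0\}})$ for $(s,N_2)$; Steps 1 and 3 and the final universal-property argument are essentially the paper's. The difference, and the problem, is in Step 2. The paper parameterizes $\phi$ by twisting $E_\rho$ and $\mc{L}$ by a single large $\mc{O}_X(m)$, pushing both forward to $\Quot_n^0\times\Jac^l$ (locally free for $m\gg0$ since the family $E_\rho$ is bounded over the quasi-compact $\Quot_n^0$), taking $\PP(\mc{H}om(\pr_{*}\mc{F},\pr_{*}\mc{G})^\vee)$, and cutting out the locus where the tautological map of section spaces kills the kernel of the evaluation map, i.e.\ descends to a sheaf homomorphism $E_\rho\to\mc{L}$. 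You instead try to present $E_\rho$ as a quotient of a \emph{fixed} bundle $\mc{G}=(\mc{O}_X(-n)\otimes Y)_{a_1,b_1,c_1}$ and push forward $\mc{G}^\vee\otimes\mc{L}$. This step fails whenever $c_1>0$: the assignment $V\mapsto V_{a,b,c}$ is functorial for isomorphisms only, and a surjection $Y\otimes\mc{O}_X(-n)\to E$ of bundles of different ranks induces no map $\bigl(\bigwedge^{p(n)}(Y\otimes\mc{O}_X(-n))\bigr)^{\otimes -c_1}\to\det(E)^{\otimes -c_1}$, so the claimed surjection $\pr_X^*\mc{G}\to\tilde E_\rho$ does not exist. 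Worse, with your $\mc{G}$ the line-bundle summands of $\mc{G}^\vee\otimes\mc{L}$ have fiberwise degree $l+a_1n-c_1np(n)$, which is very negative for $n\gg0$, so $\mc{W}_1=\pr_{B*}(\pr_X^*\mc{G}^\vee\otimes\mc{L})$ would be zero rather than locally free.

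The repair is standard and is exactly what the paper does one subsection later for the Gieseker morphism: keep the determinant on the target, i.e.\ use the genuine surjection $Y_{a_1,b_1}\otimes\pr_X^*\mc{O}_X(-a_1n)\to E_{a_1,b_1}\to E_\rho\otimes\det(E)^{\otimes c_1}$ and parameterize homomorphisms into $\mc{L}\otimes\det(E)^{\otimes c_1}(a_1n)$, absorbing $\det(E)$ via $\Jac^d$ and the Poincar\'e bundle $\mc{P}$ (or, more simply, adopt the paper's twist-by-$\mc{O}_X(m)$ device, which sidesteps the fixed-bundle presentation altogether). With that correction your construction goes through and yields the same parameter space. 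A minor further remark: the appeal to \autoref{prop:swamps_bounded} in Step 1 is unnecessary here, since the framing condition $Y\otimes\mc{O}_S\cong\pr_{S*}(E_S(n))$ is built into the definition of a decorated quotient swamp and semistability plays no role in this proposition.
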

\begin{proof}
We construct the moduli space over the parameter space of swamps from \cite[\S 2.3.5]{schmitt08}: let $\Quot_n$ be Grothendieck's Quot scheme for quotients of rank $r$ and degree $d$ of $Y\otimes\mc{O}_X(-n)$ on $X$ and let $q:Y\otimes\pr_X^*\mc{O}_X(-n)\to Q$ be the universal quotient on $\Quot_n\times X$. There is an open subscheme $\Quot_n^0$ consisting of the points $s\in \Quot_n$ such that $Q_{s}$ is a vector bundle and $H^0(q_s(n))$ is an isomorphism. We set $E:=Q_{|\Quot_n^0\times X}$. For sufficiently large $m$ the pushforward of the sheaves
\begin{align*}
  \mc{F}:&=(\pr_{\Quot^0_n}\times \id_X)^*E_\rho \otimes \pr_X^*\mc{O}_X(m)\,,\\
  \mc{G}:&=(\pr_{\Jac^l}\times\id_X)^*\mc{L} \otimes \pr_X^*\mc{O}_X(m)\,.
\end{align*}
to $P_0:=\Quot^0_n\times\Jac^l$ will be locally free. On the projective bundle
\[
 P_1:=\PP(\mc{H}om(\pr_{P_0 *}\mc{F},\pr_{P_0 *}\mc{G})^\vee)
\]
 we have the tautological homomorphism
\[
 f:\pr_{P_0}^*\pr_{P_0 *}\mc{F} \to \pr_{P_0}^*\pr_{P_0 *}\mc{G}\otimes \mc{O}_{P_1}(1)\,.
\]
Let $K$ be the kernel of the surjective evaluation homomorphism 
\[
\ev_{\mc{F}}:\pr_{P_0}^*\pr_{P_0 *}\mc{F} \to (\pr_{P_0}\times \id_X)^*\mc{F}
\]
and denote by $h$ the restriction of $(\ev_\mc{G}\otimes\id_{\mc{O}_{P_1}(1)})\pr_{P_1}^*f$ to $K$. Then, there is a closed subscheme $I:=V(h)\subset P_1$ such that a morphism $\psi:T\to P_1$ factors via $V(h)$ if and only if $\psi^*h$ is trivial (see \cite[Proposition 2.3.5.1]{schmitt08}). Thus, on $I\times X$ we have a universal homomorphism
\[
 \phi: (\pr_{\Quot_n^0}\times\id_X)^*E_\rho \to (\pr_{\Jac^l}\times\id_X)^*\mc{L} \otimes \mc{O}_{P_1}(1)\,.
\]

Over $I$ we consider the bundle
\[
P:=\PP\left((\pr_{\Quot_n^0}\times\id_X)^*E_\sigma\right)_{|I\times\{x_0\}}\,.
\]
On $P\times X$ we have the quotient 
\[
\tilde{q}:Y\otimes\pr_X^*\mc{O}_X(-n)\to \tilde{E}:=\pr_{\Quot_n^0}^*E\,,
\]
the homomorphism
\[
 \tilde{\phi}:\tilde{E}_\rho \to \tilde{L}\otimes \pr_{P}^*\tilde{N}_1
\]
with $\tilde{\ka}:=\pr_{\Jac^l}$, $\tilde{L}:=(\tilde{\ka}\times \id_X)^*\mc{L}$ and
$\tilde{N}_1:=\pr_{P_1}^* \mc{O}_{P_1}(1)$, and the tautological homomorphism
\[
 \tilde{s}:\tilde{E}_{\sigma|P\times\{x_0\}}\to \tilde{N}_2:=\mc{O}_{\PP(P)}(1)\,.
\]
It is a routine exercise to check that the family $(\tilde{q},\tilde{\ka},\tilde{N}_1,\tilde{N}_2,\tilde{\phi},\tilde{s})$ is universal.
\end{proof}

%%%%%%%%%%%%%%%%%%%%%%%%%%%%%
\subsection{The Gieseker Space}
We recall the construction of the Gieseker space from \cite[\S 2.3.5]{schmitt08}. Let $\Jac^d$ be the Jacobian of line bundles of degree $d$ on $X$ and choose a Poincar\'e bundle $\mc{P}$ on $\Jac^d\times X$. For sufficiently large $n$ the sheaf
\[
 \mc{G}_1:=\mc{H}om\left( \bigwedge^r Y \otimes\mc{O}_{\Jac^d},\pr_{\Jac^d *}(\mc{P}\otimes \pr_X^*\mc{O}_X(rn) )\right)
\]
on $\Jac^d$ is locally free. We define $\Gies_1:=\PP(\mc{G}_1^\vee)$. Without loss of generality we may assume $\mc{O}_{\Gies_1}(1)$ to be very ample. 

Let $p:\Quot_n^0\to \Jac^d$ denote the morphism determined by $\det(E)$. Then, there is a line bundle $\mc{A}$ on $\Quot_n^0$ such that $\det(E)\cong \pr_{\Quot_n^0}^*\mc{A}\otimes (p\times\id_X)^*\mc{P}$. The pushforward of the homomorphism
\[
 \bigwedge^r\left(q\otimes\id_{\pr_X^*\mc{O}_x(n)} \right):\bigwedge^r Y\otimes \mc{O}_{\Quot_n^0}\times X\to \det(E)\otimes \pr_X^*\mc{O}_X(nr)
\]
from $\Quot_n^0\times X$ to $\Quot_n^0$ determines a $\GL(Y)$-equivariant morphism $\iota_1:\Quot_n^0\to \Gies_1$ over $\Jac^d$ with $\iota_1^*\mc{O}_{\Gies_1}(1)=\mc{A}$.

On $J:=\Jac^d\times \Jac^l$ consider the locally free sheaf
\[
 \mc{G}_2:=\mc{H}om\left(Y_{a_1,b_1}\otimes\mc{O}_{J},\pr_{J *}\left(\pr_{\Jac^d\times X}^*\mc{P}^{\otimes c_1}\otimes \pr_{\Jac^l\times X}^*\mc{L}\otimes\pr_X^*\mc{O}_X(na_1)\right)\right)\,.
\]
and set $\Gies_2:=\PP(\mc{G}_2)$. Again one may assume $\mc{O}_{\Gies_2}(1)$ very ample.

On $I\times X$ we have the surjection
\[
 Y_{a_1,b_1}\otimes \pr_X^*\mc{O}_X(-a_1n)\to E_{a_1,b_1}\to E_\rho\otimes \det(E)^{\otimes c_1}\,.
\]
The composition with $\tilde{\phi}\otimes\id_{\det(E)^{\otimes c_1}}$ leads to a homomorphism
\[
 Y_{a_1,b_1}\otimes\mc{O}_{I\times X} \to L\otimes \pr_I ^*N_1\otimes \det(E)^{\otimes c_1}\otimes \pr_X^*\mc{O}_X(na_1)\,.
\]
The pushforward to $I$ yields a homomorphism
\[
 Y_{a_1,b_1}\otimes \mc{O}_I \to \pr_{I *}(L\otimes \pr_{\Jac^d \times X}\mc{P}^{\otimes c_1} \otimes \pr_X^*\mc{O}_X(na_1) )\otimes \mc{B}
\]
with $\mc{B}:=N_1\otimes \mc{A}$. This defines a $\GL(Y)$-equivariant morphism $\iota_2:I \to \Gies_2$ over $\Jac^d$ with $\iota_2^*\mc{O}_{\Gies_2}(1)=\mc{B}$. The Gieseker morphism from \cite[\S 2.3.5]{schmitt08} is the injective equivariant morphism
\[
  (\iota_1\circ \pr_{\Quot_n^0},\iota_2): I \to \Gies_1\times_{\Jac^d}\Gies_2\,.
\]

On $P\times X$ we have the surjection
\[
 Y_{a_2,b_2}\otimes\mc{O}_{P\times X}(-a_2n) \to E_{a_2,b_2}\to E_\sigma\otimes \det(E)^{\otimes c_2}\,.
\]
Restricting to $P\times\{x_0\}$ and composing with $\tilde{s}$ leads to a homomorphism
\[
 Y_{a_2,b_2}\otimes \mc{O}_P \to  N_2\otimes \pr_{\Quot_n^0}^*\mc{A}^{\otimes c_2} \otimes (\pr_{\Jac^d\times X}^*\mc{P}^{\otimes c_2})_{|P}\,.
\]
This defines a $\GL(Y)$-equivariant morphism
\[
 \iota_3:P \to \PP(Y_{a_2,b_2})=:\Gies_3\,.
\]
\begin{mydef} \label{def:Giesker_morphism}
 We define the \emph{Gieseker morphism} as
\[
 \gies_n:= (\iota_1\circ \pr_{\Quot_n^0},\iota_2,\iota_3):P \to \Gies_n:=\Gies_1\times_{\Jac^d}\Gies_2\times\Gies_3 \,.
\]
\end{mydef}
It is easy to see that $\gies_n$ is injective and $\GL(Y)$-equivariant.

\subsection{GIT Stability in the Gieseker Space}
 Let $L_d$ and $L$ be line bundles of degree $d$ and $l$ respectively. The fiber of $\Gies_n$ over the corresponding point in $\Jac^d\times \Jac^l$ is isomorphic to
 \[
  \PP\left(\Hom\left(\bigwedge^r Y,H^0(L_d(nr))\right)^\vee\right)\times \PP\left(Y_{a_1,b_1},H^0(L_d^{\otimes c_1}\otimes L(a_1n)^\vee\right) \times \PP(Y_{a_2,b_2})\,.
 \]
Let $q:Y\otimes \mc{O}_X(-n)\to E$ be a generically surjective morphism of vector bundles with $\rk(E)=r$ and $\det(E)=L_d$, and let $[M]\in \Gies_1$ denote the point defined by $q$. For a one-parameter subgroup $\la$ of $\SL(Y)$ with associated weighted flag $(Y_\bullet,\tup{\alpha})$ we calculate
\begin{equation} \label{eq:stab_in_Gies_0}
 \mu(\la,[M])=\sum_{j=1}^{\len(Y_\bullet)}\alpha_j\left(p(n)\rk(F_j)-\rk(E)\dim(Y_j) \right)\,.
\end{equation}
Here $F_j$ denotes the subsheaf $q(Y_j\otimes\mc{O}_X(-n))\subset E$ generated by $Y_j$.

For two points $T_1\in  \PP\left(Y_{a_1,b_1},H^0(L_d^{\otimes c_1}\otimes L(a_1n)^\vee\right) $ and $T_2\in \PP(Y_{a_2,b_2})$ we find
\begin{align} \label{eq:stab_in_Gies_1}
 \mu(\lambda,[T_1])&=-\min\left\{\sum_{j=1}^{\len(Y_\bullet)}
\alpha_j(a_1\dim(Y_j)-p(n)\nu_j(\tup{i}))\,\biggm|\,\tup{i}\in
I_1:T_{1|Y^{\otimes\tup{i}}}\neq 0 
\right\}\,, \\
\label{eq:stab_in_Gies_2}
 \mu(\lambda,[T_2])&=-\min\left\{\sum_{j=1}^{\len(Y_\bullet)}
\alpha_j(a_2\dim(Y_j)-p(n)\nu_j(\tup{i}))\,\biggm|\,\tup{i}\in
I_2:T_{2|Y^{\otimes\tup{i}}}\neq 0 
\right\}\,.
\end{align}
Here, $I_1$, $I_2$ and $\nu_j$ are as in equations \eqref{eq:tmp_stab1} and \eqref{eq:tmp_stab2}. For $\tup{i}\in I_m$ we define
\[
 Y^{\otimes \tup{i}}:= \bigotimes_{j=1}^{a_m} Y_{i_j} \subset Y_{a_m,b_m}\,,\qquad m=1,2\,.
\]
In particular we have the estimate
\begin{equation} \label{eq:estimate_mu(T_2)}
 -a_2 \sum_{j=1}^{\len(Y_\bullet)}\alpha_j\dim(Y_j)  \le \mu(\lambda,[T_2])\le a_2\sum_{j=1}^{\len(Y_\bullet)}\alpha_j \big(p(n)-\dim(Y_j)\big)\,.
\end{equation}

We linearize the action of $\GL(Y)$ in the line bundle
\[
\mc{O}_{\Gies_n}(\eta,\theta_1,\theta_2):=
\mc{O}_{\Gies_{1}}(\eta)\boxtimes \mc{O}_{\Gies_2}(\theta_1)\boxtimes
\mc{O}_{\Gies_3}(\theta_2) 
\]
with 
\begin{equation} \label{eq:Linearisierung}
 \eta:=z (p(n)-a_1\delta_1-a_2\delta_2)   \,, \qquad
\theta_1:=zr\delta_1\,, \qquad \theta_2:=zr\delta_2\,.
\end{equation}
Here, $z$ is a natural number such that $\eta$, $\theta_1$ and $\theta_2$ are positive integers.

%%%%%%%%%%%%%%%%%%%%%%%%%%%%%%%%%%%%%%%%%%%%%%%%%%%%%%%%%%%%%
\section{Comparison of Stability Conditions}
The aim of this section is to show that for large enough $n$ GIT stability of the image under the Gieseker morphism of a point in the parameter space is equivalent to stability of the corresponding decorated swamp. For this purpose we introduce a third notion of stability.

\subsection{Section Stability}
For a weighted flag $(E_\bullet,\tup{\alpha})$ and a number $n$ we define the quantity
\[
 M^{\textnormal{s}}(E_\bullet,\tup{\alpha},n):=\sum_{i=1}^k \alpha_i\left( h^0(E(n))\rk(E_i)-h^0(E_i(n))\rk(E)\right)\,.
\]
\begin{mydef}
 We call a decorated swamp $(E,L,\phi,s)$ \emph{$(\delta_1,\delta_2,n)$-section-\textup{(}semi\nobreakdash-\textup{)}stable} if any weighted filtration $(E_\bullet,\tup{\alpha})$ of $E$ satisfies
\[
 M^{\textnormal{s}}(E_\bullet,\tup{\alpha},n)+\delta_1\mu_1(E_\bullet,\tup{\alpha},\phi)+\delta_2\mu_2(E_\bullet,\tup{\alpha},s) (\ge) 0\,.
\]
\end{mydef}
\begin{lemma}
 There is an $n_2\ge n_1$ such that for $n\ge n_2$ and a $(\delta_1,\delta_2,n)$-section-semistable swamp $(E,L,\phi,s)$ we have $h^1(E(n))=0$.
\end{lemma}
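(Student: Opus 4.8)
The plan is to argue by contradiction: assuming $h^1(E(n))\neq 0$ I would manufacture from this a rank-one quotient of $E$ of very negative degree, and then show that the associated one-step weighted flag violates section-semistability once $n$ is large. First I would use Serre duality to write $h^1(E(n))=\dim\Hom(E,\omega_X(-n))$. A nonzero such homomorphism has image a rank-one subsheaf $Q_0\subset\omega_X(-n)$, which on a smooth curve is automatically a line bundle, so $E$ admits a nonzero quotient $E\to Q_0$ with $Q_0(n)\hookrightarrow\omega_X$; in particular $h^0(Q_0(n))\le h^0(\omega_X)=g$. The kernel $F:=\ker(E\to Q_0)$ is then a saturated subbundle of rank $r-1$ (the case $r=1$ is trivial, since $E(n)$ is then a line bundle of degree $d+n$, whose $h^1$ vanishes as soon as $d+n>2g-2$).

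Next I would test section-semistability against the weighted flag $(0\subset F\subset E,(1))$. For this flag $M^{\mathrm{s}}(E_\bullet,(1),n)=h^0(E(n))\,(r-1)-h^0(F(n))\,r$, while the decoration contributions $\mu_1(E_\bullet,(1),\phi)$ and $\mu_2(E_\bullet,(1),s)$ are bounded in absolute value by the $n$-independent constants $a_1r$ and $a_2r$ respectively: indeed, for a one-step flag the formulas of \autoref{rem:mu_1,2} read $\mu_m=-\min\{a_m(r-1)-r\,\nu_1(\tup{i})\}$ with $0\le\nu_1(\tup{i})\le a_m$, so $|\mu_m|\le a_mr$. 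Writing $C':=\delta_1a_1r+\delta_2a_2r$, the defining inequality of section-semistability then rearranges to $h^0(F(n))\le\frac{r-1}{r}h^0(E(n))+\frac{C'}{r}$.

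Finally I would combine this with the short exact sequence $0\to F(n)\to E(n)\to Q_0(n)\to 0$, which gives $h^0(E(n))\le h^0(F(n))+h^0(Q_0(n))\le\frac{r-1}{r}h^0(E(n))+\frac{C'}{r}+g$. This forces $h^0(E(n))\le C'+rg$, a bound independent of $n$ and of the swamp. On the other hand $h^0(E(n))\ge\chi(E(n))=p(n)=d+r(n+1-g)$, which tends to infinity with $n$. Choosing $n_2\ge n_1$ so large that $p(n)>C'+rg$ for all $n\ge n_2$ yields the desired contradiction, proving $h^1(E(n))=0$. The heart of the argument—and the one step that requires genuine insight rather than bookkeeping—is the passage from the nonvanishing of $h^1$ to an honest destabilizing quotient via Serre duality; once that quotient is produced, the only thing to monitor is that every error term ($C'$, and the bound $g$ on $h^0(Q_0(n))$) is independent of both $n$ and the particular decorated swamp, which is exactly what the explicit formulas of \autoref{rem:mu_1,2} and the embedding $Q_0(n)\hookrightarrow\omega_X$ guarantee.
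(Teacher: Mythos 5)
Your proof is correct and follows essentially the same route as the paper: Serre duality produces a nonzero map $E\to\omega_X(-n)$, one tests section-semistability against the one-step flag given by its kernel, bounds $\mu_1,\mu_2$ by constants independent of $n$ via the formulas of \autoref{rem:mu_1,2}, and derives a contradiction with Riemann--Roch for large $n$. Your constant $C'=r(a_1\delta_1+a_2\delta_2)$ is slightly cruder than the paper's $(r-1)(a_1\delta_1+a_2\delta_2)$, but this is immaterial.
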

\begin{proof}
 Assume $h^1(E(n))=h^0(E^\vee(-n)\otimes \omega_X)\neq 0$ and let $f:E\to \omega_X(-n)$ be a non-trivial homomorphism. Section semistability for the weighted flag $(0\subset \ker(f)\subset E,(1))$ implies
 \[
  h^0(F(n))\rk(E)\le h^0(E(n))\rk(F)+(a_1\delta_1+a_2\delta_2)\rk(F)\,.
 \]
Using $h^0(E(n))-g\le h^0(F(n))$, $\rk(F)=\rk(E)-1$ and the Riemann--Roch theorem we find
\[
 \mu(E)+n+1-g \le \frac{h^0(E(n))}{\rk(E)} \le g+\frac{\rk(E)-1}{\rk(E)}(a_1\delta_1+a_2\delta_2)
\]
Thus, we get a contradiction for $n\ge 2g-\mu(E)+(a_1\delta_1+a_2\delta_2)$.
\end{proof}
The above lemma together with the Riemann--Roch theorem implies:
\begin{cor}
 For $n\ge n_2$ and a $(\delta_1,\delta_2,n)$-section-semistable swamp $(E,L,\phi,s)$ we have 
 \[
  M(E_\bullet,\tup{\alpha}) \ge M^{\textnormal{s}}(E_\bullet,\tup{\alpha},n)\,.
 \]
\end{cor}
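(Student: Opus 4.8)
The plan is to compare $M(E_\bullet,\tup{\alpha})$ and $M^{\textnormal{s}}(E_\bullet,\tup{\alpha},n)$ summand by summand, exploiting the fact that the two quantities have identical shape and differ only in that $\deg(E)$ and $\deg(E_i)$ are replaced by $h^0(E(n))$ and $h^0(E_i(n))$. First I would form the difference and collect the coefficient of each $\alpha_i$, obtaining
\[
 M(E_\bullet,\tup{\alpha}) - M^{\textnormal{s}}(E_\bullet,\tup{\alpha},n) = \sum_{i=1}^k \alpha_i\Big[\rk(E_i)\big(\deg(E)-h^0(E(n))\big) - \rk(E)\big(\deg(E_i)-h^0(E_i(n))\big)\Big]\,.
\]

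Next I would rewrite each cohomological term using Riemann--Roch, which for any vector bundle $F$ of rank $\rk(F)$ on a curve of genus $g$ gives $h^0(F(n)) - h^1(F(n)) = \deg(F) + \rk(F)(n+1-g)$. The preceding lemma guarantees $h^1(E(n))=0$ for $n\ge n_2$, so $\deg(E)-h^0(E(n)) = -\rk(E)(n+1-g)$, whereas for the subbundles one only gets $\deg(E_i)-h^0(E_i(n)) = -\rk(E_i)(n+1-g) - h^1(E_i(n))$. Substituting these into the bracket, the contributions proportional to $(n+1-g)$ cancel against each other and one is left with the clean expression
\[
 M(E_\bullet,\tup{\alpha}) - M^{\textnormal{s}}(E_\bullet,\tup{\alpha},n) = \rk(E)\sum_{i=1}^k \alpha_i\, h^1(E_i(n))\,.
\]

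Finally, since the weights $\alpha_i$ are positive by the definition of a weighted flag, $\rk(E)>0$, and each $h^1(E_i(n))\ge 0$, the right-hand side is nonnegative, which yields the asserted inequality. There is no genuine obstacle here: the only input beyond routine Riemann--Roch bookkeeping is the vanishing $h^1(E(n))=0$ furnished by the previous lemma, and this is exactly what makes the $E$-contribution drop out entirely while the subbundle contributions survive as the nonnegative defect $h^1(E_i(n))$. In particular the inequality is an equality precisely when every $E_i(n)$ has vanishing first cohomology.
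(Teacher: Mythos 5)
Your argument is correct and is precisely the computation the paper leaves implicit: the corollary is stated as an immediate consequence of the lemma $h^1(E(n))=0$ together with Riemann--Roch, and your summand-by-summand cancellation yielding $M-M^{\textnormal{s}}=\rk(E)\sum_i\alpha_i h^1(E_i(n))\ge 0$ is exactly that deduction. Nothing to add.
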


\begin{lemma}
 \label{lem:h1=0=>positive}
 There is an $n_3\ge n_2$ such that for $n\ge n_3$ and a $(\delta_1,\delta_2)$-semi-stable swamp $(E,L,\phi,s)$ any subbundle $F\subset E$ with $h^1(F(n))\neq 0$ satisfies
 \[
  h^0(E(n))\rk(F)-h^0(F(n))\rk(E)-\rk(F)(a_1\delta_1+a_2\delta_2) > 0\,.
 \]
\end{lemma}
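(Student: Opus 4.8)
The plan is to rewrite the asserted inequality in terms of slopes, to bound $h^1(F(n))$ by a Harder--Narasimhan estimate, and to control the remaining terms by the boundedness of \autoref{prop:swamps_bounded}. First I would apply Riemann--Roch. For $n\ge n_2$ we have $h^1(E(n))=0$, so $h^0(E(n))=p(n)$, while $h^0(F(n))=\deg(F)+\rk(F)(n+1-g)+h^1(F(n))$. Substituting, the $n$-dependent terms cancel and one obtains
\[
 h^0(E(n))\rk(F)-h^0(F(n))\rk(E)=M-\rk(E)\,h^1(F(n))\,,
\]
where $M:=\deg(E)\rk(F)-\deg(F)\rk(E)=M(0\subset F\subset E,(1))$. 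Thus the lemma is equivalent to the lower bound $M-\rk(E)\,h^1(F(n))>\rk(F)(a_1\delta_1+a_2\delta_2)$.

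The crucial step is to estimate $h^1(F(n))$ through the slopes of $F$. I would pass to the Harder--Narasimhan filtration $0=F^0\subset\cdots\subset F^s=F$, with semistable quotients $G^i$ of rank $f_i$ and slope $\mu_i$, so that $h^1(F(n))\le\sum_i h^1(G^i(n))$. Applying the standard estimate $h^0(W)\le\rk(W)\max(0,\mu(W)+1)$ for a semistable bundle $W$ to $W=(G^i)^\vee(-n)\otimes\omega_X$ gives
\[
 h^1(G^i(n))\le f_i\,\max\big(0,\ 2g-1-n-\mu_i\big)\,.
\]
Hence only the ``low-slope'' quotients, those with $\mu_i<2g-1-n$, contribute; call their index set $B$ and the remaining set $H$. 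Since the hypothesis $h^1(F(n))\neq 0$ forces $B\neq\varnothing$, we have $\sum_{i\in B}f_i\ge 1$.

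Finally I would combine everything. Writing $M=\rk(E)\sum_i f_i(\mu(E)-\mu_i)$ and inserting the estimate, the slope $\mu_i$ cancels in every low-slope term, leaving
\[
 M-\rk(E)\,h^1(F(n))\ge \rk(E)\sum_{i\in H}f_i(\mu(E)-\mu_i)+\rk(E)\,(\mu(E)+n+1-2g)\sum_{i\in B}f_i\,.
\]
For $i\in H$ the maximal destabilizing subbundle of $F$ is a proper subbundle of $E$, so \autoref{prop:swamps_bounded} yields $\mu_i\le\mu_{\max}(F)<\mu(E)+C$, whence each such term exceeds $-\rk(E)\,f_i\,C$; as $\sum_{i\in H}f_i\le\rk(E)$ and $\sum_{i\in B}f_i\ge 1$, the right-hand side is greater than $-\rk(E)^2C+\rk(E)(\mu(E)+n+1-2g)$. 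Since $\mu(E)=d/r$ is fixed and $\rk(F)(a_1\delta_1+a_2\delta_2)\le\rk(E)(a_1\delta_1+a_2\delta_2)$ is independent of $n$, choosing $n_3\ge n_2$ with $\mu(E)+n_3+1-2g>\rk(E)\,C+a_1\delta_1+a_2\delta_2$ makes the desired inequality hold for every such $F$ simultaneously.

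I expect the main obstacle to be the magnitude estimate for $h^1(F(n))$: one must descend to the Harder--Narasimhan quotients and, most importantly, notice that the (possibly large and negative) slopes of the low-slope quotients cancel \emph{exactly} against their $h^1$-contributions, so that the only potentially harmful terms are the high-slope quotients --- which are precisely the ones tamed by the slope bound of \autoref{prop:swamps_bounded}.
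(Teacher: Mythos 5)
Your argument is correct, but it is organized quite differently from the paper's. The paper splits the \emph{set of subbundles} $F$ into two classes according to whether $\mu_{\min}(F)$ lies below or above a threshold of the form $\mu(E)+C-\rk(F)C'$: for the low-$\mu_{\min}$ class the full Le Potier--Simpson bound $h^0(F(n))\le(\rk(F)-1)[\mu_{\max}(F(n))+1]_+ +[\mu_{\min}(F(n))+1]_+$ already yields the strict inequality for every $n\ge n_2$, without ever invoking $h^1(F(n))\neq 0$; the high-$\mu_{\min}$ class forms a bounded family, so for $n\gg 0$ the hypothesis $h^1(F(n))\neq 0$ is never met there and the statement is vacuous. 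You instead keep a single $F$, pass to its Harder--Narasimhan filtration, and split the \emph{graded pieces} by slope; the exact cancellation of each low-slope quotient's slope against its $h^1$-contribution replaces the paper's boundedness argument. Both proofs rest on the same two inputs: \autoref{prop:swamps_bounded} for the upper slope bound, and a Le Potier--Simpson-type estimate (you apply it to the semistable Harder--Narasimhan quotients of the Serre dual, the paper to $F(n)$ itself). What your route buys is an explicit, uniform $n_3$ and a genuine use of the hypothesis $h^1(F(n))\neq 0$ (via $\sum_{i\in B}f_i\ge 1$); what the paper's buys is brevity, at the price of a non-effective choice of $n_3$ coming from boundedness. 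Two small points to make explicit if you write this up: the step $\sum_{i\in H}f_i(\mu(E)-\mu_i)>-\rk(E)\,C$ uses $C\ge 0$, which does hold for the constant produced in \autoref{prop:swamps_bounded}; and you should dispose of the case $F=E$ separately (there $h^1(E(n))=0$ for $n\ge n_2$, so the hypothesis fails), so that the maximal destabilizing subbundle of $F$ is indeed a non-trivial \emph{proper} subbundle of $E$ to which \autoref{prop:swamps_bounded} applies.
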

\begin{proof}
 By \autoref{prop:swamps_bounded} there is a constant $C$ such that $\mu(F)\le \mu(E)+C$ holds for all $(\delta_1,\delta_2)$-semi-stable decorated swamps $(E,L,\phi,s)$ and all subbundles $F$ of $E$. We set
 \[
  C':= g+ C+\frac{a_1\delta_1+a_2\delta_2}{\rk(E)}
 \]
and divide the set of isomorphism classes of subbundles of a vector bundle $E$ occurring in some semi-stable decorated swamp into two subsets
\begin{align*}
 A:&=\{[F]\,|\, \mu_{\min} < \mu(E)+C -\rk(F)C'\}\,, \\
 B:&=\{[F]\,|\, \mu_{\min} \ge \mu(E)+C -\rk(F)C'\}\,.
\end{align*}

Suppose $[F]\in A$. The Le Potier--Simpson estimate \cite[Lemma 7.1.2]{LePot97} yields
\[
 h^0(F(n))\le (\rk(F)-1)[\mu_{\max}(F(n))+1]_+ +[\mu_{\min}(F(n))+1]_+\,.
\]
The inequality defining $A$ and the Riemann--Roch theorem then imply
\[
 h^0(F(n))< \rk(F) \frac{h^0(E(n))}{\rk(E)}-\frac{\rk(F)}{\rk(E)}(a_1\delta_1+a_2\delta_2)\,.
\]

For $[F]\in B$, the definition of $B$ and \autoref{prop:swamps_bounded} lead to
\[
  \mu(E)+C-\rk(F)C'  \le \mu(F) < \mu(E) + C\,.
\]
Thus, the set $\{\deg(F)\,|\,[F]\in B\}$ is finite and hence $B$ is bounded. In particular, there is an $n_3\ge n_2$ such that $h^1(F(n))=0$ for $n\ge n_3$.
\end{proof}

\begin{mydef} \label{def:decomposition}
 Let $(E_\bullet,\tup{\alpha})$ be a weighted flag. We decompose the set of indices into two subsets $I^A=\{i^A_1< \ldots<i^A_{k^A}\}$ and $I^B=\{i^B_1< \ldots<i^B_{k^B}\}$ such that $i\in I^A$ if $h^1(E_i(n))\neq 0$ and $i\in I^B$ otherwise. Then we define the flags
 \[
  E^{A/B}\,: \, 0 \subset E_{i^{A/B}_1} \subset \cdots \subset E_{i^{A/B}_{k^{A/B}}} \subset E
 \]
and the weight vectors $\tup{\alpha}^{A/B}:=(\alpha_{i^{A/B}_1},\ldots,\alpha_{i^{A/B}_{k^{A/B}}})$.
\end{mydef}

\begin{prop}
 For $n\ge n_3$, a $(\delta_1,\delta_2)$-semi-stable swamp $(E,L,\phi,s)$ and a weighted flag $(E_\bullet,\tup{\alpha})$ 
 \begin{align*}
   &M^{\textnormal{s}}(E_\bullet,\tup{\alpha},n)+\delta_1\mu_1(E_\bullet,\tup{\alpha},\phi)+\delta_2\mu_2(E_\bullet,\tup{\alpha},s)\\
   \ge &\,M(E^B_\bullet,\tup{\alpha}^B)+\delta_1\mu_1(E^B_\bullet,\tup{\alpha}^B,\phi)+\delta_2\mu_2(E^B_\bullet,\tup{\alpha}^B,s)\,.
 \end{align*}
\end{prop}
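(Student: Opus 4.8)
The plan is to exploit the fact that all three quantities split along the index decomposition of \autoref{def:decomposition}, and that the two pieces are governed by entirely different mechanisms: on the $B$-part the section term $M^{\textnormal{s}}$ coincides with the genuine slope term $M$, while on the $A$-part it is forced to be large by \autoref{lem:h1=0=>positive}.

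First I would note that $M^{\textnormal{s}}(E_\bullet,\tup{\alpha},n)$ is, by definition, a sum of contributions indexed by the flag steps, so that
\[
 M^{\textnormal{s}}(E_\bullet,\tup{\alpha},n)=M^{\textnormal{s}}(E^A_\bullet,\tup{\alpha}^A,n)+M^{\textnormal{s}}(E^B_\bullet,\tup{\alpha}^B,n)\,.
\]
For $i\in I^B$ we have $h^1(E_i(n))=0$, and since $n\ge n_3$ also $h^1(E(n))=0$ (which holds for all $n\ge n_0$ and semistable $(E,L,\phi,s)$). Riemann--Roch then turns each $h^0$ in the $i$-th summand into an Euler characteristic, and the term $(n+1-g)\rk(E)\rk(E_i)$ cancels, so that the $i$-th summand of $M^{\textnormal{s}}$ equals the $i$-th summand of $M$. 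Hence $M^{\textnormal{s}}(E^B_\bullet,\tup{\alpha}^B,n)=M(E^B_\bullet,\tup{\alpha}^B)$, and after substituting this and cancelling the common term $M(E^B_\bullet,\tup{\alpha}^B)$ the assertion reduces to
\[
 M^{\textnormal{s}}(E^A_\bullet,\tup{\alpha}^A,n)\ge \delta_1\bigl(\mu_1(E^B_\bullet,\tup{\alpha}^B,\phi)-\mu_1(E_\bullet,\tup{\alpha},\phi)\bigr)+\delta_2\bigl(\mu_2(E^B_\bullet,\tup{\alpha}^B,s)-\mu_2(E_\bullet,\tup{\alpha},s)\bigr)\,.
\]

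Next I would bound the two $\mu$-differences from above. Let $\tup{\alpha}^A_0$ and $\tup{\alpha}^B_0$ be the full-length weight vectors that agree with $\tup{\alpha}$ on $I^A$, respectively $I^B$, and vanish on the complementary indices, so that $\tup{\alpha}=\tup{\alpha}^B_0+\tup{\alpha}^A_0$ with $\tup{\alpha}^A_0\ge 0$. Applying the estimate \eqref{eq:estimate_V_abc} to the presentation \eqref{eq:tmp_stab1} (with $a=a_1$ and $\dim(V_j)=\rk(E_j)$) gives
\[
 \mu_1(E_\bullet,\tup{\alpha},\phi)\ge \mu_1(E_\bullet,\tup{\alpha}^B_0,\phi)-a_1\sum_{j\in I^A}\alpha_j\rk(E_j)\,.
\]
Because a flag step carrying weight zero does not alter the underlying one-parameter subgroup, $\mu_1(E_\bullet,\tup{\alpha}^B_0,\phi)=\mu_1(E^B_\bullet,\tup{\alpha}^B,\phi)$, whence $\mu_1(E^B_\bullet,\tup{\alpha}^B,\phi)-\mu_1(E_\bullet,\tup{\alpha},\phi)\le a_1\sum_{j\in I^A}\alpha_j\rk(E_j)$. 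The identical argument with \eqref{eq:tmp_stab2} yields the analogous bound with $a_2$ for $\mu_2$, so the right-hand side of the reduced inequality is at most $(a_1\delta_1+a_2\delta_2)\sum_{j\in I^A}\alpha_j\rk(E_j)$.

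Finally I would dispatch the $A$-part. By construction every $i\in I^A$ satisfies $h^1(E_i(n))\ne 0$, so \autoref{lem:h1=0=>positive} applies to the subbundle $E_i$ and gives $h^0(E(n))\rk(E_i)-h^0(E_i(n))\rk(E)>\rk(E_i)(a_1\delta_1+a_2\delta_2)$. Multiplying by $\alpha_i>0$ and summing over $I^A$ yields $M^{\textnormal{s}}(E^A_\bullet,\tup{\alpha}^A,n)\ge (a_1\delta_1+a_2\delta_2)\sum_{i\in I^A}\alpha_i\rk(E_i)$, with equality only in the vacuous case $I^A=\varnothing$; this is exactly the quantity needed to beat the bound from the previous step, and combining the three steps proves the proposition. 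The only genuinely delicate point is the $\mu$-comparison in the middle step: one must apply \eqref{eq:estimate_V_abc} in the correct direction and be certain that zeroing the $I^A$-weights reproduces $\mu_m(E^B_\bullet,\tup{\alpha}^B,\cdot)$ rather than a refinement of it. Everything else is bookkeeping with Riemann--Roch and \autoref{lem:h1=0=>positive}.
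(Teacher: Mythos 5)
Your proof is correct and follows essentially the same route as the paper's: split $M^{\textnormal{s}}$ along the $A/B$ decomposition, use the estimate \eqref{eq:estimate_V_abc} to compare $\mu_m(E_\bullet,\tup{\alpha},\cdot)$ with $\mu_m(E^B_\bullet,\tup{\alpha}^B,\cdot)$ at the cost of $a_m\sum_{j\in I^A}\alpha_j\rk(E_j)$, absorb that cost into $M^{\textnormal{s}}(E^A_\bullet,\tup{\alpha}^A,n)$ via \autoref{lem:h1=0=>positive}, and convert $M^{\textnormal{s}}(E^B_\bullet,\tup{\alpha}^B,n)$ into $M(E^B_\bullet,\tup{\alpha}^B)$ by Riemann--Roch. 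You merely spell out the details (in particular the identification $\mu_m(E_\bullet,\tup{\alpha}^B_0,\cdot)=\mu_m(E^B_\bullet,\tup{\alpha}^B,\cdot)$) that the paper leaves implicit.
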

\begin{proof}
 i) Suppose $(E_\bullet,\tup{\alpha})=(E^B_\bullet,\tup{\alpha}^B)$, i.e., $h^1(E_i(n))=0$ for all $i$. Then, the Riemann--Roch theorem implies
 \[
  M^{\textnormal{s}}(E_\bullet,\tup{\alpha},n)= M(E_\bullet,\tup{\alpha})\,.
 \]
 
 ii) If there is an index $i$ with $h^1(E_i(n))\neq 0$ then we use the decomposition from \autoref{def:decomposition}. Using the estimate \eqref{eq:estimate_V_abc} we get
  \begin{align*}
 &
M^{\textnormal{s}}(E_\bullet,\tup{\alpha},n)+\delta_1\mu_1(E_\bullet,\tup{\alpha},
\phi)+\delta_2\mu_2(E_\bullet,\tup{\alpha},s)\\
 \ge&
M^{\textnormal{s}}(E_\bullet^B,\tup{\alpha}^B,n)+\delta_1\mu_1(E_\bullet^B,\tup{
\alpha}^B,\phi)+\delta_2\mu_2(E_\bullet^B,\tup{\alpha}^B,s)\\
  & +
M^{\textnormal{s}}(E_\bullet^A,\tup{\alpha}^A,n)-(\delta_1a_1+\delta_2a_2)\sum_{j=1
}^{\len(E_\bullet^A)}\alpha^A_j \rk(E^A_j)\,.
 \end{align*}
The last term is positive by \autoref{lem:h1=0=>positive}. The claim now follows from part (i).
\end{proof}

\begin{cor} \label{cor:stable<=>section_stable}
 For $n\ge n_3$ a decorated swamp $(E,L,\phi,s)$ is $(\delta_1,\delta_2)$-(semi-)stable if and only if it is $(\delta_1,\delta_2,n)$-section-(semi-)stable. In this case every weighted flag $(E_\bullet,\tup{\alpha})$ satisfies
\begin{align*}
 & M^{\textnormal{s}}(E_\bullet,\tup{\alpha},n)+\delta_1\mu_1(E_\bullet,\tup{\alpha},
\phi)+\delta_2\mu_2(E_\bullet,\tup{\alpha},s)(\ge) 0\\
\Longleftrightarrow \quad & M(E_\bullet,\tup{\alpha})+\delta_1\mu_1(E_\bullet,\tup{\alpha},
\phi)+\delta_2\mu_2(E_\bullet,\tup{\alpha},s)(\ge) 0\,.
 \end{align*}
\end{cor}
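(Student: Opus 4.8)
The plan is to derive the equivalence by combining the two preceding results, which pull in opposite directions: the Corollary gives $M(E_\bullet,\tup{\alpha}) \ge M^{\textnormal{s}}(E_\bullet,\tup{\alpha},n)$ for section-semistable swamps, whereas the Proposition bounds the section-expression below by the $M$-expression of the sub-flag $E^B_\bullet$ for semistable swamps. Throughout I abbreviate $\delta_1\mu_1 + \delta_2\mu_2$ for $\delta_1\mu_1(E_\bullet,\tup{\alpha},\phi) + \delta_2\mu_2(E_\bullet,\tup{\alpha},s)$ evaluated at the flag under consideration.

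For the implication ``section-(semi\nobreakdash-)stable $\Rightarrow (\delta_1,\delta_2)$-(semi\nobreakdash-)stable'' I would argue directly. Section-(semi)stability entails section-semistability, so the Corollary applies and yields $M(E_\bullet,\tup{\alpha}) \ge M^{\textnormal{s}}(E_\bullet,\tup{\alpha},n)$ for every weighted flag. Hence
\[
 M(E_\bullet,\tup{\alpha}) + \delta_1\mu_1 + \delta_2\mu_2 \ge M^{\textnormal{s}}(E_\bullet,\tup{\alpha},n) + \delta_1\mu_1 + \delta_2\mu_2 \ (\ge)\ 0,
\]
the last relation being section-(semi)stability. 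Since the step $M\ge M^{\textnormal{s}}$ does not see strictness, the semistable and the stable versions follow simultaneously.

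The converse ``$(\delta_1,\delta_2)$-(semi\nobreakdash-)stable $\Rightarrow$ section-(semi\nobreakdash-)stable'' carries the difficulty. As (semi)stability implies semistability, the Proposition gives, for every weighted flag,
\[
 M^{\textnormal{s}}(E_\bullet,\tup{\alpha},n) + \delta_1\mu_1 + \delta_2\mu_2 \ge M(E^B_\bullet,\tup{\alpha}^B) + \delta_1\mu_1(E^B_\bullet,\tup{\alpha}^B,\phi) + \delta_2\mu_2(E^B_\bullet,\tup{\alpha}^B,s),
\]
with $(E^B_\bullet,\tup{\alpha}^B)$ the sub-flag of \autoref{def:decomposition}. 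In the semistable case the right-hand side is $\ge 0$ by applying semistability to the genuine weighted flag $(E^B_\bullet,\tup{\alpha}^B)$ (an empty flag contributing $0$), and section-semistability follows. The main obstacle is upgrading this to the strict inequality needed for stability, since the clean bound degenerates precisely when $E^B_\bullet$ is the trivial flag and its right-hand side collapses to $0$. I would therefore split into cases: if $E^B_\bullet$ is nontrivial, strict stability applied to it makes the right-hand side strictly positive; if $E^B_\bullet$ is trivial — so that every index lies in $I^A$ — I would instead invoke the sharper intermediate inequality obtained inside the proof of the Proposition,
\[
 M^{\textnormal{s}}(E_\bullet,\tup{\alpha},n) + \delta_1\mu_1 + \delta_2\mu_2 \ge M^{\textnormal{s}}(E^A_\bullet,\tup{\alpha}^A,n) - (\delta_1 a_1 + \delta_2 a_2)\sum_{j=1}^{\len(E^A_\bullet)} \alpha^A_j \rk(E^A_j).
\]
Because the original flag is nontrivial, $E^A_\bullet$ is nonempty, and \autoref{lem:h1=0=>positive} makes every summand on the right strictly positive (each $E^A_j$ satisfies $h^1(E^A_j(n)) \neq 0$); thus the section-expression is strictly positive, giving section-stability.

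Finally, the per-flag biconditional in the statement is then immediate: once the swamp is (semi)stable, hence also section-(semi)stable, both $M(E_\bullet,\tup{\alpha}) + \delta_1\mu_1 + \delta_2\mu_2\,(\ge)\,0$ and $M^{\textnormal{s}}(E_\bullet,\tup{\alpha},n) + \delta_1\mu_1 + \delta_2\mu_2\,(\ge)\,0$ hold for every weighted flag, so the two inequalities are satisfied together, flag by flag.
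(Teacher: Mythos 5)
Your proposal is correct and follows the same route the paper intends: the forward direction from the corollary giving $M\ge M^{\textnormal{s}}$ for section-semistable swamps, and the converse from the proposition bounding the section-expression below by the $M$-expression of $(E^B_\bullet,\tup{\alpha}^B)$. Your case split for the strict converse — applying stability to $E^B_\bullet$ when it is nontrivial, and falling back on the strictly positive $A$-part supplied by \autoref{lem:h1=0=>positive} when $E^B_\bullet$ is empty — is exactly the point the paper leaves implicit, and you handle it correctly.
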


\subsection{Slope and GIT Stability}
In the following let $p\in P$ be a point and let $(E,L,\phi,s)$ be the decorated swamp defined by $p$.

If $(E_\bullet,\tup{\alpha})$ is a weighted flag of $E$, we associate with it a weighted flag $\Gamma_p(E_\bullet,\tup{\alpha})$ in the following way: Set $U_j:=H^0(q_p(n))^{-1}H^0(E_j(n))$ and let $Y_\bullet$ be the induced flag of $Y$. For $1\le h\le \len(Y_\bullet)$ we define $J(h):=\{j\,|\, U_j=Y_h\}$ and
\[
 \beta_h:=\sum_{j\in J(h)} \alpha_j \,, \qquad 1\le h\le \len(Y_\bullet)\,.
\]
Note that we also allow the trivial flag $0\subset Y$ with weight $\tup{\beta}\in\{0\}$. Then we set $\Gamma_p(E_\bullet,\tup{\alpha}):=(Y_\bullet,\tup{\beta})$.

\begin{prop}\phantomsection \label{prop:GIT-stable=>stable}
 \begin{enumerate}
\item For a weighted flag $(E_\bullet,\tup{\alpha})$ and a one-parameter subgroup $\la$ of $\SL(Y)$ with associated weighted flag $\Gamma(E_\bullet,\tup{\alpha})=(Y_\bullet,\tup{\beta})$ we have
\[
  \frac{\mu(\la,\gies_n(t))}{zp(n)} \le M^{\textnormal{s}}(E_\bullet,\tup{\alpha},n)+\delta_1\mu_1(E_\bullet,\tup{
\alpha},\phi)+\delta_2\mu_2(E_\bullet,\tup{\alpha},s)\,.
 \]

\item If equality holds, one has $\len(E_\bullet)=\len(Y_\bullet)$ and $E_j$ is generically generated by $Y_j$ for $1\le j \le \len(E_\bullet)$.
\end{enumerate}
\end{prop}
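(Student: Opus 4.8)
The plan is to use additivity of the Hilbert--Mumford weight under the linearization $\mc{O}_{\Gies_n}(\eta,\theta_1,\theta_2)$. Writing $\gies_n(p)=([M],[T_1],[T_2])$ and noting that $\GL(Y)$ acts trivially on the Jacobian factors, we obtain
\[
 \mu(\la,\gies_n(p))=\eta\,\mu(\la,[M])+\theta_1\,\mu(\la,[T_1])+\theta_2\,\mu(\la,[T_2])\,,
\]
and I would evaluate each summand from \eqref{eq:stab_in_Gies_0}, \eqref{eq:stab_in_Gies_1} and \eqref{eq:stab_in_Gies_2} relative to $\Gamma_p(E_\bullet,\tup{\alpha})=(Y_\bullet,\tup{\beta})$. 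Since $p$ parametrises a decorated \emph{quotient} swamp, $H^0(q_p(n))$ is an isomorphism; hence $p(n)=h^0(E(n))$, and whenever $U_j=Y_h$ one has $\dim(Y_h)=h^0(E_j(n))$ while the subsheaf $F_h$ generated by $Y_h$ is contained in $E_j$, so $\rk(F_h)\le\rk(E_j)$. The relation $\beta_h=\sum_{j\in J(h)}\alpha_j$ turns every sum over $h$ into a sum over the original indices $j$; in particular $\sum_h\beta_h\dim(Y_h)=\sum_j\alpha_j h^0(E_j(n))$.

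The decisive point is the shape of the linearization. Writing $\mu(\la,[T_1])=-a_1\sum_h\beta_h\dim(Y_h)+p(n)\,m_1$ and similarly for $[T_2]$, where $m_1$ (resp.\ $m_2$) is the maximum of $\sum_h\beta_h\nu_h(\tup{i})$ over the multi-indices $\tup{i}$ with $T_{1|Y^{\otimes\tup{i}}}\neq0$ (resp.\ $T_{2|Y^{\otimes\tup{i}}}\neq0$), and substituting $\eta=z(p(n)-a_1\delta_1-a_2\delta_2)$, $\theta_1=zr\delta_1$, $\theta_2=zr\delta_2$, the coefficient of $\sum_h\beta_h\dim(Y_h)$ collapses to $-zr\,p(n)$: the $\delta$-corrections cancel exactly. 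Dividing by $zp(n)$ leaves
\[
 \frac{\mu(\la,\gies_n(p))}{zp(n)}=(p(n)-a_1\delta_1-a_2\delta_2)\sum_h\beta_h\rk(F_h)-r\sum_h\beta_h\dim(Y_h)+r\delta_1 m_1+r\delta_2 m_2\,.
\]
Expanding the right-hand side of the claim by \eqref{eq:tmp_stab1} and \eqref{eq:tmp_stab2} gives the same shape, but with $\rk(E_j)$ in place of $\rk(F_h)$, with $h^0(E_j(n))$ in the middle term (already matched), and with $M_1,M_2:=\max_{\tup{i}}\sum_j\alpha_j\nu_j(\tup{i})$ over the $\phi$- respectively $s$-nonvanishing indices in place of $m_1,m_2$. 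Since $\eta>0$ by construction, the inequality $\rk(F_h)\le\rk(E_j)$ disposes of the first term, and the whole statement reduces to the two comparisons $m_1\le M_1$ and $m_2\le M_2$.

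The main obstacle is exactly these two comparisons, which amount to a matching of index sets. I would show that for a multi-index $\tup{i}$ with $T_{1|Y^{\otimes\tup{i}}}\neq0$ the lift $\tup{i}'$ obtained by replacing each $Y$-index $i_l$ by the \emph{smallest} $E$-index in its $\Gamma_p$-class satisfies $(\phi\circ p_1)_{|E^{\otimes\tup{i}'}}\neq0$ and $\sum_h\beta_h\nu_h(\tup{i})=\sum_j\alpha_j\nu_j(\tup{i}')$. The identity on the $\nu$-sums follows from $\beta_h=\sum_{j\in J(h)}\alpha_j$ together with the monotonicity of $j\mapsto U_j$, while the nonvanishing holds because the sections in $Y_{i_l}=U_{i'_l}$ generate a subsheaf of $E_{i'_l}$, so that $Y^{\otimes\tup{i}}$ maps into $E^{\otimes\tup{i}'}$ under $q_p$ and $T_1$ is the pushforward of $\phi$ precomposed with this map. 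Taking maxima yields $m_1\le M_1$, and the same argument with $s$ in place of $\phi$ gives $m_2\le M_2$, which proves part (1).

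For part (2) I would read off the equality conditions. Equality forces $\rk(F_h)=\rk(E_j)$ for every $j$ with $\alpha_j>0$, which is precisely the statement that $E_j$ is generically generated by $Y_j$. It also forces $\len(Y_\bullet)=\len(E_\bullet)$: if two steps collapsed, say $U_j=U_{j+1}$, then the subsheaf generated by them would have a single rank, contradicting $\rk(F_j)=\rk(E_j)<\rk(E_{j+1})=\rk(F_{j+1})$; hence no collapsing occurs in $\Gamma_p$ and the flags have equal length.
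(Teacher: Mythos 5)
Your proof is correct and follows essentially the same route as the paper's: the same additive decomposition over the three Gieseker factors, the same identities $\dim(Y_h)=h^0(E_j(n))$, $\rk(F_h)\le\rk(E_j)$, and the same lifting of multi-indices via $j(h)=\min J(h)$ to compare the $\nu$-sums and transfer nonvanishing of $T_1,T_2$ to $\phi,s$. Your reorganization (isolating the max-terms $m_1,m_2$ and observing that the $\delta$-corrections in the coefficient of $\sum_h\beta_h\dim(Y_h)$ cancel to $-r$) is just a tidier bookkeeping of the paper's inequality chain, and the equality analysis in part (2) matches the paper's observation that $J(0)=\varnothing$ and $\rk(F_h)=\rk(E_j)$ are forced.
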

\begin{proof}
 Let $F_h\subset E$ be the subbundle generated by $Y_h\otimes\mc{O}_X(-n)$. We define $j(h):=\min J(h)$. For $1\le h \le \len(Y_\bullet)$ and $j\in J(h)$ we have $\dim(Y_h)=h^0(E_j(n))$ and $\rk(F_h)\le \rk(E_j)$. For $j\in J(0)$ we find $h^0(E_j(n))=0$. Now \eqref{eq:stab_in_Gies_0} implies
  \begin{align*}
 \mu(\lambda,[M]) \le& \sum_{j=1}^{\len(E_\bullet)} \alpha_j\left(p(n)\rk(E)-h^0(E_j(n))\rk(E)\right)\,,
\end{align*}
Note that equality can only hold, if $J(0)=\varnothing$ and $\rk(F_h)=\rk(E_j)$ for $j\in J(h)$.

Let $\tup{h}^{(1)}$ and $\tup{h}^{(1)}$ be tuples for which the minimum in \eqref{eq:stab_in_Gies_1} and \eqref{eq:stab_in_Gies_2} is attained. For an index $0\le j \le \len(E_\bullet)$ let $1\le h(j)\le \len(Y_\bullet)$ be the index with $Y_{h(j)}=U_j$. Since $\nu_{h(j)}(\tup{h}^{(m)})=0$ and $h^0(E_j(n))=0$ for $m=1,2$, $j\in J(0)$ we find
\begin{align*}
 \mu(\lambda,[T_1]) &= \sum_{j=1}^{\len(E_\bullet)}
\alpha_{j}\left(p(n)\nu_{h(j)}(\tup{h}^{(1)})-a_1h^0(E_j(n))\right)\,,\\
 \mu(\lambda,[T_2]) &= \sum_{j=1}^{\len(E_\bullet)}
\alpha_{j}\left(p(n)\nu_{h(j)}(\tup{h}^{(2)})-a_2h^0(E_j(n))\right)\,.
\end{align*}

With our choice of a linearization the above calculations show
\begin{align*}
 \mu(\la,\gies_n(p))=&  \eta \mu(\lambda,[M])+\theta_1
\mu(\lambda,[T_1])+\theta_2\mu(\lambda,[T_2])\\
  \le & zp(n)\left[\sum_{j=1}^{\len(E_\bullet)}\alpha_j
\left(h^0(E(n))\rk(E_j)-\rk(E)h^0(E_j(n))\right)\right.\\
   & \phantom{zp(n)[} +\delta_1\sum_{j=1}^{\len(E_\bullet)}\alpha_j\left(r\nu_{h(j)}(h^{(1)})-a_1\rk(E_j)\right)\\
  & \left.\phantom{zp(n)[}  +\delta_2\sum_{j=1}^{
\len(E_\bullet)}\alpha_j\left(r\nu_{h(j)}(h^{(2)})-a_2\rk(E_j)\right)  \right]\,.
\end{align*}

Let now $j^{(m)}_l:=j(h^{(m)}_l)$, $m=1,2$, $l=1,\ldots,a_m$. Since $T_{1|Y^{\otimes
\tup{h}^{(1)}}}\neq 0$ and the bundle $F^{\otimes
\tup{h}^{(1)}}$ generated by $Y^{\otimes \tup{h}^{(1)}}$ is contained in $E^{\otimes \tup{j}^{(1)}}$ it is clear that $\phi_{|E^{\otimes
\tup{j}^{(1)}}}\neq 0$. Thus, equation \eqref{eq:tmp_stab1} yields
\[
 \mu_1(E_\bullet,\tup{\alpha},\phi)\ge
\sum_{j=1}^{\len(E_\bullet)}\alpha_j\left(r\nu_{j}(\tup{j}^{(1)})-a_1\rk(E_j)\right)\,.
\]
Similarly, equation \eqref{eq:tmp_stab2} implies
\[
 \mu_2(E_\bullet,\tup{\alpha},s)\ge
\sum_{j=1}^{\len(E_\bullet)}\alpha_j\left(r\nu_{j}(\tup{j}^{(2)})-a_2\rk(E_j)\right)\,.
\]

Because $j(h)\le j$ is equivalent to $h\le h(j)$ we get
$\nu_{h(j)}(\tup{h}^{(m)})=\nu_j(\tup{j}^{(m)})$ for $m=1,2$. This finally leads to
\[
  \frac{\mu(\la,\gies_n(p))}{zp(n)}\le M^{\textnormal{s}}(E_\bullet,\tup{\alpha},n)
+\delta_1
\mu_1(E_\bullet,\tup{\alpha},\phi)+\delta_2\mu_2(E_\bullet,\tup{\alpha},s)\,.
\]
\end{proof}
%%%%%%%%%%%%%%%%%%%%%%%%%%

Let $(Y_\bullet,\tup{\beta})$ be a weighted flag of $Y$. Let $F_h\subset E$ be the subsheaf generated by $Y_h$ and let $E'_h$ be the subbundle generically generated by $F_h$. Let $E_\bullet$ be the flag induced by the subbundles $E'_h$ with $h^1(F_h(n))=0$. For an index $1\le j\le \len(E_\bullet)$ we set
$H(j):=\{1\le h\le \len(Y_\bullet)\,|\,E'_h=E_j\}$, $h(j):=\min H(j)$ and define
\[
\alpha_j:=\sum_{h\in H(j)}\beta_h\,,\qquad 1\le j< \len(E_\bullet)\,.
\]
This defines the weighted flag $Q_p(Y_\bullet,\tup{\beta}):=(E_\bullet,\tup{\alpha})$.

\begin{prop} \label{prop:stable=>GIT-stable}
 Suppose that $\delta_2a_2<1$, $n\ge n_3$ and $(E,L\phi,s)$ is $(\delta_1,\delta_2)$-semi-stable. 
 \begin{enumerate}
  \item For any one-parameter subgroup $\la$ with associated weighted flag $(Y_\bullet,\tup{\beta})$ the weighted flag $(E_\bullet,\tup{\alpha})=Q_p(Y_\bullet,\tup{\beta})$ satisfies 
  \[  
  \frac{\mu(\la,\gies_n(t))}{zp(n)} \ge 
 M^{\textnormal{s}}(E_\bullet,\tup{\alpha},n)
 +\delta_1\mu_1(E_\bullet,\tup{\alpha},\phi)+\delta_2(E_\bullet,\tup{\alpha},s)\,.
 \]
  \item If equality holds, we have $\len(Y_\bullet)=\len(E_\bullet)$, $h^1(F_j(n))=0$, $E_j$ is generated by $Y_j$ and $Y_j=H^0(q(n))^{-1}(H^0(E_j))$ for $1\le j \le \len(Y_\bullet)$.
 \end{enumerate} 
\end{prop}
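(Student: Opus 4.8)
The plan is to prove Proposition \ref{prop:stable=>GIT-stable} as an essentially inverse estimate to \autoref{prop:GIT-stable=>stable}, exploiting the fact that the two constructions $\Gamma_p$ and $Q_p$ are mutually inverse on the relevant flags. Given a one-parameter subgroup $\la$ with weighted flag $(Y_\bullet,\tup{\beta})$, I would compute $\mu(\la,\gies_n(p))$ directly from the three contributions \eqref{eq:stab_in_Gies_0}, \eqref{eq:stab_in_Gies_1} and \eqref{eq:stab_in_Gies_2}, using the linearization \eqref{eq:Linearisierung}. The point of $Q_p$ is that each step $Y_h$ is replaced by the subbundle $E_h'$ it generically generates; since generic generation can only increase the rank, one gets the bound $\rk(E_h')\ge \rk(F_h)$, and for the $\Gies_1$-term this produces the inequality in the correct direction for $M^{\textnormal{s}}$. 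The reindexing via $H(j)$ and $h(j)$ collapses repeated subbundles and turns the sum over $h$ into a sum over $j$ with weights $\alpha_j=\sum_{h\in H(j)}\beta_h$, exactly matching the definition of $Q_p$.

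For the decoration terms I would argue as follows. The quantities $\mu_1$ and $\mu_2$ are defined as minima over tuples $\tup{i}$ (see \eqref{eq:tmp_stab1}, \eqref{eq:tmp_stab2}), while $\mu(\la,[T_1])$ and $\mu(\la,[T_2])$ are minima over tuples $\tup{h}$ of indices of $Y_\bullet$. Because $E_j$ is the subbundle generically generated by $F_{h(j)}$, the homomorphism $\phi$ is non-trivial on $E^{\otimes\tup{i}}$ whenever $T_1$ is non-trivial on the corresponding $Y^{\otimes\tup{h}}$, so the minimum defining $\mu_1$ is taken over a set at least as large as the one governing $\mu(\la,[T_1])$, and the relation $\nu_{h(j)}(\tup{h})=\nu_j(\tup{j})$ from the monotonicity of $h\mapsto h(j)$ transports the weights correctly. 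The same reasoning applies to $s$ and $\mu_2$. Combining the three estimates with the weights $\eta,\theta_1,\theta_2$ and dividing by $zp(n)$ yields part (1). Here the hypothesis $h^1(F_j(n))=0$ built into the definition of $E_\bullet$ via \autoref{def:decomposition}, together with \autoref{lem:h1=0=>positive}, ensures that discarding the steps with $h^1(F_h(n))\neq 0$ only decreases the right-hand side, which is precisely where $M^{\textnormal{s}}$ (rather than $M$) and the bound on the $A$-part come in.

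For part (2) I would trace through the chain of inequalities and identify where equality forces rigidity. Equality in the $\Gies_1$-estimate forces $\rk(E_h')=\rk(F_h)$, i.e. $F_h$ is already a subbundle, hence generically generated by $Y_h$; equality in the $\Gies_2$- and $\Gies_3$-estimates forces the minimizing tuples for $\mu_1,\mu_2$ to come from the flag $Y_\bullet$ in the tightest possible way; and the requirement that no step be dropped forces $h^1(F_j(n))=0$ for every $j$, so that $\len(Y_\bullet)=\len(E_\bullet)$ and $Y_j=H^0(q(n))^{-1}(H^0(E_j(n)))$. This last identification is exactly the assertion that $\Gamma_p$ and $Q_p$ are inverse to each other on flags achieving equality.

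\emph{The main obstacle} I expect is the careful bookkeeping needed to handle the steps $Y_h$ with $h^1(F_h(n))\neq 0$: these are precisely the $A$-indices from \autoref{def:decomposition}, and one must show that throwing them away (as the definition of $E_\bullet$ does) never improves the lower bound for the decorated swamp. This is where the hypothesis $\delta_2a_2<1$ is essential --- it is the only place the size of the local decoration is constrained, and it must be used to absorb the error term $(\delta_1a_1+\delta_2a_2)\sum_j\alpha_j^A\rk(E_j^A)$ coming from the crude estimate \eqref{eq:estimate_V_abc} on $\mu_2$, guaranteeing (via \autoref{lem:h1=0=>positive}) that the $A$-contribution to $M^{\textnormal{s}}$ strictly dominates it. Getting the direction of every inequality right while simultaneously tracking the equality conditions is the delicate part; the underlying geometry (generic generation, monotonicity of the reindexing maps) is routine.
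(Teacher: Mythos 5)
Your overall architecture (compute the three contributions, transport minimizing tuples between the flag of $Y$ and the flag of $E$, split off the steps with $h^1\neq 0$) is the right one, but two specific points break the argument as written. First, the implication you invoke for the decoration terms points the wrong way. To bound $\mu(\la,[T_1])$ from \emph{below} one must start from a tuple $\tup{i}^{(1)}$ realizing the minimum in \eqref{eq:tmp_stab1}, transport it to $\tup{h}^{(1)}$ with $h^{(1)}_k=h(i^{(1)}_k)$, and prove that $T_1$ is non-trivial on $Y^{\otimes\tup{h}^{(1)}}$ \emph{because} $Y^{\otimes\tup{h}^{(1)}}$ generically generates $E^{\otimes\tup{i}^{(1)}}$ and $\phi_{|E^{\otimes\tup{i}^{(1)}}}\neq 0$; only then does \eqref{eq:stab_in_Gies_1} give $\mu(\la,[T_1])\ge{}$ the negative of the value at $\tup{h}^{(1)}$. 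You state the converse implication ($T_1\neq 0$ on $Y^{\otimes\tup{h}}$ implies $\phi\neq 0$ on the corresponding $E^{\otimes\tup{i}}$), which is the one needed in \autoref{prop:GIT-stable=>stable}; making "the set defining $\mu_1$ at least as large" in this way only produces an \emph{upper} bound on $\mu(\la,[T_1])$ in terms of $\mu_1$, which is useless for part (1).

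Second, and more seriously, your claim that "the same reasoning applies to $s$ and $\mu_2$" is false, and this asymmetry is the whole point of the proposition. The decoration $s$ is evaluated at the single point $x_0$, and the subsheaf $F_h$ generated by $Y_h$ may differ from its saturation $E'_h$ by a torsion sheaf $T_h=E'_h/F_h$ supported possibly at $x_0$; then $s_{|E^{\otimes\tup{i}^{(2)}}}\neq 0$ does \emph{not} imply $T_{2|Y^{\otimes\tup{h}^{(2)}}}\neq 0$. One must fall back on the crude lower bound in \eqref{eq:stab_in_Gies_2} (cf.\ \eqref{eq:estimate_mu(T_2)}), which introduces an error term proportional to $a_2h^0(T_h)$; this is absorbed using $h^0(E'_h(n))=h^0(F_h(n))+h^0(T_h)$ together with $a_2\delta_2<1$, via $rh^0(F_h(n))+a_2\delta_2 rh^0(T_h)\le rh^0(E'_h(n))$. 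So the hypothesis $a_2\delta_2<1$ is consumed by this torsion correction (and by the positivity of $p(n)-\dim(Y_h)-a_2\delta_2$ for the steps with $E'_h=E$), not, as you assert, by the $A$-part error $(\delta_1a_1+\delta_2a_2)\sum_h\beta^A_h\dim(Y^A_h)$ --- that term is disposed of by \autoref{lem:h1=0=>positive} alone, with no constraint on $\delta_2$. Without the torsion analysis part (1) has a genuine hole, and the equality discussion in part (2) misses the resulting rigidity condition $T_h=0$, i.e.\ $E'_h=F_h$, which is exactly what forces $E_j$ to be generated (not merely generically generated) by $Y_j$.
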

\begin{proof}
 We  first assume $h^1(F_h(n))=0$ for all $h$. With $\dim(Y_h)\le h^0(F_h)$  and \eqref{eq:stab_in_Gies_0} we get
\begin{align*}
 \mu(\lambda,[M])=\, &\sum_{h=1}^{\len(Y_\bullet)}\beta_h \left(p(n)\rk(F_h)-r\dim(Y_h)\right)\\
  \ge\, & \sum_{j=1}^{\len(E_\bullet)}\sum_{h\in H(j)}\beta_h \left(p(n)\rk(F_h)-r
h^0(F_h(n))\right)\\
  & +\sum_{h\in H(\len(E_\bullet)+1)}\beta_hr \left(p(n)-\dim(Y_h)\right)\,.
\end{align*}
Note that here equality can only occur if $\dim(Y_h)=h^0(F(n))$ for all $h$.

Let $\tup{i}^{(m)}$, $m=1,2$, be tuples for which the minimum is attained in \eqref{eq:tmp_stab1} and \eqref{eq:tmp_stab2} respectively. We define $h^{(m)}_k:=h(i^{(m)}_k)$, $k=1,\ldots,a_m$, for $m=1,2$. Since $Y^{\otimes
\tup{h}^{(1)}}$ generically generates $E^{\otimes \tup{i}^{(1)}}$ and $\phi_{|E^{\otimes
\tup{i}^{(1)}}}\neq 0$ it follows that $T_{1|Y^{\otimes \tup{h}^{(1)}}}$ is non-trivial. Then, equation \eqref{eq:stab_in_Gies_1} says
\begin{align*}
 \mu(\lambda,[T_1])
  \ge\; & \sum_{j=1}^{\len(E_\bullet)}\sum_{h\in H(j)}\beta_h \left(p(n)\nu_h(\tup{h}^{(1)})-a_1h^0(F_h(n))\right)\\
  & +a_1\sum_{h\in H(\len(E_\bullet)+1)}\beta_h(p(n)-\dim(Y_h))\,.
\end{align*}

If the torsion sheaf $T_h:=E'_h/F_h$ does not vanish, $T_{2|Y^{\otimes \tup{h}^{(2)}}}$ may be trivial even if $s_{|E^{\otimes\tup{i}^{(2)}}}$ is not. Using \eqref{eq:stab_in_Gies_2} we merely find the estimate
\begin{align*}
 \mu(\lambda,[T_2])\ge\; & \sum_{j=1}^{\len(E_\bullet)}\sum_{h\in H(j)}\beta_h \left(p(n)(\nu_h(\tup{h}^{(2)})-a_2h^0(T_h))-a_2h^0(F_h(n))\right)\\
     &-a_2\sum_{h\in H(\len(E_\bullet)+1)}\beta_h \dim(Y_h)\,.
\end{align*}

The above calculations lead to
\begin{align*}
\frac{ \mu(\la,\gies_n(p))}{zp(n)} \ge& \sum_{j=1}^{\len(E_\bullet)}\sum_{h\in H(j)}\beta_h \left(h^0(E(n))\rk(F_h)-rh^0
(F_h(n))-a_2\delta_2
rh^0(T_h)\right) \\
  & + \delta_1\sum_{j=1}^{\len(E_\bullet)}\sum_{h\in H(j)}\beta_h \left(r\nu_h(\tup{h}^{(1)})-a_1\rk(F_h)\right) \\
  & + \delta_2\sum_{j=1}^{\len(E_\bullet)}\sum_{h\in H(j)}\beta_h \left(r \nu_h(\tup{h}^{(2)})-a_2\rk(F_h)\right)\\
  & +  r\sum_{h\in H(\len(E_\bullet)+1)} \beta_h \left(p(n)-\dim(Y_h)-a_2\delta_2 \right)\,.
\end{align*}
Due to our assumption on $a_2$ the term $(p(n)-\dim(Y_h)-a_2\delta_2)$ is positive.
Because we assume $h^1(F_h(n))=0$ we have $h^0(E'_h(n))=h^0(F_h(n))+h^0(T_h)$ and thus
\[
 rh^0(F_h(n))+a_2\delta_2 rh^0(T_h)= rh^0(E'_h(n))-rh^0(T_h)(1-a_2\delta_2)\le
rh^0(E'_h(n)).
\]
Note that equality can only hold if $T_h=0$ and thus $E'_h=F_h$ for all $h$.

Using $\rk(F_h)=\rk(E'_h)=\rk(E_j)$, $h^0(E'_h)=h^0(E_j)$ and
$\nu_h(\tup{h}^{(m)})=\nu_j(\tup{i}^{(m)})$ for $h\in H(j)$ one finds
\begin{align*}
\frac{\mu(\lambda,\gies_n(p))}{zp(n)} \ge
M^{\textnormal{s}}(E_\bullet,\tup{\alpha},n) +\delta_1\mu_1(E_\bullet,\tup{\alpha},\phi)+\delta_2\mu_2(E_\bullet,\tup{\alpha},s)\,.
\end{align*}

 Now let $(Y_\bullet,\tup{\beta})$ be an arbitrary weighted flag. Similarly to \autoref{def:decomposition} we decompose the flag $(Y_\bullet,\tup{\beta})$ into two flags
$(Y_\bullet^A,\tup{\beta}^A)$ and $(Y_\bullet^B,\tup{\beta}^B)$ such that $h^1(F^{B}_h(n))=0$ for all $1\le h\le \len(Y_\bullet^B)$ and $h^1(F^{A}_h(n))\neq 0$ for all $1\le h\le \len(Y_\bullet^A)$. Using \eqref{eq:estimate_V_abc} we get
\begin{align*}
\mu(\lambda,\gies(p)) =\;&  \eta
\mu(Y_\bullet,\tup{\beta},[M])+\theta_1\mu(\mu(Y_\bullet,\tup{\beta},[T_1]
)+\theta_2\mu(Y_\bullet,\tup{\beta},[T_2])\\
 \ge\; & 
\eta\mu(Y^A_\bullet,\tup{\beta}^A,[M])-(a_1\theta_1+a_2\theta_2)\sum_{h=1}^{\len(Y_\bullet^A)}
\beta^A_h\dim(Y^A_h)\\
  & + 
\eta\mu(Y^B_\bullet,\tup{\beta}^B,[M])+\theta_1\mu(Y^B_\bullet,\tup{\beta}^B,[T_1]
)+\theta_2\mu(Y^B_\bullet,\tup{\beta}^B,[T_2])\,.
\end{align*}

Using the linearization \eqref{eq:Linearisierung}, the stability condition \eqref{eq:stab_in_Gies_0} and $\dim(Y^A_h)\le h^0(F_h^A)$ we find
\begin{align*}
 &\eta\mu(Y^A_\bullet,\tup{\beta}^A,[M])-(a_1\theta_1+a_2\theta_2)\sum_{h=1}^{\len(Y^A_\bullet)}
\beta^A_h\dim(Y^A_h)\\
 \ge & zp(n)
\sum_{h=1}^{\len(Y^A_\bullet)}\beta^A_h\left[h^0(E(n))\rk(F^A_h)-h^0(F^A_h(n))\rk(E)-(a_1\delta_1+a_2\delta_2)\rk(F^A_h)\right].
\end{align*}
This expression is positive by \autoref{lem:h1=0=>positive}. Note that equality can only hold if $h^1(F_h(n))=0$ for all $h$.
\end{proof}

\begin{cor} \label{cor:GIT-stable<=>stable}
 For $a_2\delta_2<1$, $n\ge n_3$ and a point $p\in P_n$ the decorated swamp $(E,L,\phi,s)$ is $(\delta_1,\delta_2,n)$-section-(semi-)stable if and only if $\gies_n(p)$ is GIT (semi-)stable.
\end{cor}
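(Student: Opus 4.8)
The plan is to deduce the corollary from the two preceding propositions by means of the Hilbert--Mumford criterion (\autoref{Hilbert-Mumford-Krit}). Since $\gies_n$ is injective and $\GL(Y)$-equivariant, the invariant $\mu(\la,\gies_n(p))$ may be computed on the image in $\Gies_n$, and, as throughout this section, it suffices to test one-parameter subgroups $\la$ of $\SL(Y)$, for which the weights $\eta,\theta_1,\theta_2$ of the linearization \eqref{eq:Linearisierung} are arranged so that the formulas \eqref{eq:stab_in_Gies_0}--\eqref{eq:stab_in_Gies_2} apply. Thus $\gies_n(p)$ is GIT-(semi\nobreakdash-)stable if and only if $\mu(\la,\gies_n(p))\,(\ge)\,0$ for every non-trivial such $\la$, and the whole proof consists in translating this condition, via the correspondences $\Gamma_p$ and $Q_p$, into the section-stability inequality.

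For the semistable case I would argue both directions separately. To prove ``GIT-semistable $\Rightarrow$ section-semistable'', start from an arbitrary weighted flag $(E_\bullet,\tup{\alpha})$ of $E$, pass to $\Gamma_p(E_\bullet,\tup{\alpha})=(Y_\bullet,\tup{\beta})$ and its one-parameter subgroup $\la$, and combine Hilbert--Mumford with \autoref{prop:GIT-stable=>stable}(1) to get
\[
 0\;\le\;\frac{\mu(\la,\gies_n(p))}{zp(n)}\;\le\;M^{\textnormal{s}}(E_\bullet,\tup{\alpha},n)+\delta_1\mu_1(E_\bullet,\tup{\alpha},\phi)+\delta_2\mu_2(E_\bullet,\tup{\alpha},s),
\]
the left inequality holding trivially when $\la$ is trivial. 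For the converse, \autoref{cor:stable<=>section_stable} guarantees that a section-semistable swamp is $(\delta_1,\delta_2)$-semistable, so that the hypotheses of \autoref{prop:stable=>GIT-stable} (where $a_2\delta_2<1$ and $n\ge n_3$ enter) are met; to each non-trivial $\la$ with flag $(Y_\bullet,\tup{\beta})$ I attach $(E_\bullet,\tup{\alpha})=Q_p(Y_\bullet,\tup{\beta})$ and use part (1) of that proposition together with section-semistability to obtain $\mu(\la,\gies_n(p))/zp(n)\ge M^{\textnormal{s}}+\delta_1\mu_1+\delta_2\mu_2\ge 0$, whence GIT-semistability via Hilbert--Mumford.

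The stable case requires upgrading these $(\ge)$-inequalities to strict ones for every non-trivial $\la$, and the clean mechanism for this is the length-matching in part (2) of each proposition. Suppose $\gies_n(p)$ is GIT-stable and $(E_\bullet,\tup{\alpha})$ is a non-trivial weighted flag with $\Gamma_p(E_\bullet,\tup{\alpha})=(Y_\bullet,\tup{\beta})$ and one-parameter subgroup $\la$: if $\la$ is non-trivial, Hilbert--Mumford gives $\mu(\la,\gies_n(p))>0$ and \autoref{prop:GIT-stable=>stable}(1) forces $M^{\textnormal{s}}+\delta_1\mu_1+\delta_2\mu_2>0$; if instead $\la$ is trivial, then $\len(Y_\bullet)=0\neq\len(E_\bullet)$, so \autoref{prop:GIT-stable=>stable}(2) rules out equality and $0=\mu(\la,\gies_n(p))/zp(n)<M^{\textnormal{s}}+\delta_1\mu_1+\delta_2\mu_2$. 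The reverse implication is handled symmetrically, replacing $\Gamma_p$ by $Q_p$ and appealing to \autoref{prop:stable=>GIT-stable}(2) to exclude the degenerate configuration in which a non-trivial $\la$ yields a trivial flag $(E_\bullet,\tup{\alpha})$. I expect the bookkeeping of precisely these trivial-flag degeneracies to be the only genuine obstacle; once the length statements of part (2) are invoked they dispatch it without any separate estimate, and assembling the semistable and stable cases completes the proof.
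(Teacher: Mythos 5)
Your argument is correct and follows exactly the route the paper intends: the corollary is stated without proof precisely because it is the combination of the Hilbert--Mumford criterion with \autoref{prop:GIT-stable=>stable}(1) and \autoref{prop:stable=>GIT-stable}(1) (the latter made applicable by \autoref{cor:stable<=>section_stable}), together with the length statements in part (2) to exclude equality in the degenerate cases where $\Gamma_p$ or $Q_p$ produces a trivial flag. Your explicit bookkeeping of those trivial-flag degeneracies in the stable case is exactly the detail the paper leaves implicit, and it is handled correctly.
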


%%%%%%%%%%%%%%%%%%%%%%%%%%%%%%%%%%%%%%%%%%%
\section{Proof of \autoref{main_thm}}
\label{sec:proof_of_main_thm}
This section is devoted to the proof of the main theorem. We first exhibit a family of decorated swamps with the local universal property. Then we check that our definition of S-equivalence in \autoref{def:S-equivalence} agrees with the general notion from \autoref{subsec:moduli_spaces}. Finally we show that the restriction of the Gieseker morphism to the locus of $(\delta_1,\delta_2)$-semi-stable decorated swamps is proper. Together with injectivity this will allow us to construct the good quotient of the parameter space. 

\subsection{The Local Universal Property}
We denote by $P^{\textnormal{(s)s}}_n:=\gies_n^{-1}(\Gies^{\textnormal{(s)s}})$ the open subscheme of $(\delta_1,\delta_2)$-(semi-)stable decorated swamps.
\begin{lemma}
 The family $(\tilde{E},\tilde{\ka},\tilde{N}_1,\tilde{\phi},\tilde{s})$ parameterized by $P^{\textnormal{(s)s}}_n$ satisfies the local universal property for $(\delta_1,\delta_2)$-\textup{(}semi\nobreakdash-\textup{)}stable decorated swamps.
\end{lemma}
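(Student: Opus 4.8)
The plan is to begin with an arbitrary $(\delta_1,\delta_2)$-(semi-)stable family $D_S=(E_S,\ka_S,N_{1,S},N_{2,S},\phi_S,s_S)$ parameterized by a scheme $S$ together with a point $s_0\in S$, and to produce a neighborhood $U$ of $s_0$ and a morphism $f\colon U\to P^{\textnormal{(s)s}}_n$ such that $f^*(\tilde{E},\tilde{\ka},\tilde{N}_1,\tilde{N}_2,\tilde{\phi},\tilde{s})\sim D_{S|U}$. The whole argument rests on rigidifying $E_S$ by trivializing the pushforward of its twist.

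First I would invoke boundedness. Since $n\ge n_0$, every fiber $E_{S|\{s\}\times X}$ satisfies $h^1(E(n))=0$ and $E(n)$ is globally generated, which is the consequence of \autoref{prop:swamps_bounded} recorded just before \autoref{prop:parameter_space}. By cohomology and base change the sheaf $\mc{E}:=\pr_{S*}(E_S\otimes\pr_X^*\mc{O}_X(n))$ is then locally free of rank $p(n)$ and its formation commutes with base change, while fiberwise global generation and Nakayama make the evaluation map $\pr_S^*\mc{E}\to E_S\otimes\pr_X^*\mc{O}_X(n)$ surjective.

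Next I would shrink $S$: on a small enough neighborhood $U$ of $s_0$ I choose an isomorphism $Y\otimes\mc{O}_U\to\mc{E}_{|U}$, identifying $Y\cong\CC^{p(n)}$ with the fibers of $\mc{E}$. Composing the (inverse) evaluation with this trivialization yields a quotient
\[
 q_U\colon Y\otimes\pr_X^*\mc{O}_X(-n)\to E_{S|U}
\]
whose pushforward $\pr_{U*}(q_U(n))$ is exactly the chosen isomorphism, hence invertible, so that $(q_U,\ka_{S|U},N_{1,S|U},N_{2,S|U},\phi_{S|U},s_{S|U})$ is a family of decorated quotient swamps. By the fine moduli property of \autoref{prop:parameter_space} it is classified by a unique morphism $f\colon U\to P$ under which the universal family pulls back to it. In particular $\tilde{q}$ pulls back to $q_U$, so forgetting the rigidification identifies $f^*\tilde{E}$ with $E_{S|U}$ compatibly with the quotients; moreover $f^*\tilde{\ka}=\ka_{S|U}$ into $\Jac^l$, whence $f^*\tilde{L}=(\ka_{S|U}\times\id_X)^*\mc{L}$ agrees with the line bundle $L$ of $D_{S|U}$ through the fixed Poincaré bundle $\mc{L}$, and $f^*\tilde{\phi}$, $f^*\tilde{s}$ match $\phi_{S|U}$, $s_{S|U}$ via the isomorphisms $h_1$, $h_2$ of $N_{1,S|U}$, $N_{2,S|U}$ with $f^*\tilde{N}_1$, $f^*\tilde{N}_2$. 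Taking $L_S=\mc{O}_U$ this is precisely an isomorphism $f^*\xi\sim D_{S|U}$ of decorated swamp families.

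It remains to place $f$ inside $P^{\textnormal{(s)s}}_n$. Each fiber $D_{S|s}$ is $(\delta_1,\delta_2)$-(semi-)stable, and since we work with $n\ge n_3$ and $a_2\delta_2<1$, \autoref{cor:stable<=>section_stable} makes it $(\delta_1,\delta_2,n)$-section-(semi-)stable, whereupon \autoref{cor:GIT-stable<=>stable} forces $\gies_n(f(s))$ to be GIT-(semi-)stable. Thus $f(U)\subset\gies_n^{-1}(\Gies^{\textnormal{(s)s}})=P^{\textnormal{(s)s}}_n$, and $f$ is the sought morphism. The only genuinely delicate step is the middle one: checking that \emph{forgetting} the rigidification recovers $D_{S|U}$ up to the family-isomorphism relation, i.e.\ that the decoration line bundles $\tilde{N}_1$, $\tilde{N}_2$ and the realization of $L$ through $\mc{L}$ are matched correctly; the boundedness and base-change inputs, while essential, are otherwise routine.
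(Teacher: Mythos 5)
Your argument is correct and follows essentially the same route as the paper: boundedness via \autoref{prop:swamps_bounded}, local rigidification of $E_S$ to obtain a family of decorated quotient swamps (you unfold explicitly the local universal property of the Quot scheme that the paper cites as a black box), the classifying morphism from the fine moduli property of \autoref{prop:parameter_space}, and the factorization through $P^{\textnormal{(s)s}}_n$ via \autoref{cor:GIT-stable<=>stable}. No substantive difference.
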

\begin{proof}
 Let $D_S=(E_S,\ka_S,N_S,\phi_S,s_S)$ be a family parameterized by a scheme $S$ and $s\in S$ a point. By \autoref{prop:swamps_bounded} and the local universal property of the Quot scheme there is a neighborhood $U$ of $s$ and a morphism $f_0:U\to \Quot_n^0$ with $(f_0\times\id_X)^*Q\cong E_{|U\times X}$. The quotient $(f_0\times\id_X)^*q$ and the restriction of $D_S$ to $U$ define a family of decorated quotient swamps on $U$. By \autoref{prop:parameter_space} this defines a morphism $f:U\to P_n$ over $f_0$. By \autoref{cor:GIT-stable<=>stable} $f$ factorizes via $P_n^{\textnormal{(s)s}}$.
\end{proof}

\begin{prop}
 Let $f_1,f_2:S\to P_n$ be two morphisms. The pullbacks of the family $D:=(\tilde{E},\tilde{\ka},\tilde{N}_1,\tilde{\phi},\tilde{s})$ are isomorphic if and only if there exists a morphism $g:S\to \PGL(Y)$ such that $g\cdot f_1=f_2$.
\end{prop}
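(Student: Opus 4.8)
The statement asserts that the universal family $D$ separates $\PGL(Y)$-orbits on $S$-valued points of $P_n$, and I treat the two implications separately.

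\textbf{The easy direction.} Suppose $g\colon S\to\PGL(Y)$ satisfies $g\cdot f_1=f_2$. The action of $\GL(Y)$ on $P_n$ is induced by its action on $Y$: an element sends a decorated quotient swamp with quotient $q$ to the one with quotient $q\circ(g^{-1}\otimes\id)$, leaving the remaining data fixed. The centre $\CC^*\subset\GL(Y)$ merely rescales $q$ and hence acts trivially on $P_n$, so the action descends to $\PGL(Y)$. By the construction of the universal family in \autoref{prop:parameter_space} it is equivariant, i.e. $a^*D\cong\pr_{P_n}^*D$ for the action morphism $a\colon\PGL(Y)\times P_n\to P_n$, the rescaling being absorbed into a twist by a line bundle pulled back from the base (which is permitted in the notion of isomorphism of families). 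Pulling back along $(g,f_1)\colon S\to\PGL(Y)\times P_n$ then gives $f_2^*D\cong f_1^*D$.

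\textbf{The hard direction.} Now assume $f_1^*D\cong f_2^*D$, and write $D_i=f_i^*D=(E_i,\ka_i,N_{1,i},N_{2,i},\phi_i,s_i)$ with quotients $q_i\colon Y\otimes\pr_X^*\mc{O}_X(-n)\to E_i$ on $S\times X$. By the definition of isomorphic families we obtain $\ka_1=\ka_2$, a line bundle $L_S$ on $S$, and an isomorphism $\psi\colon E_1\to E_2\otimes\pr_S^*L_S$ that is compatible with $\phi_i$ and $s_i$ via isomorphisms $h_1,h_2$ of the $N$'s. The plan is to manufacture $g$ from $\psi$ by comparing the two quotient presentations. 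Writing $(n)$ for the twist by $\pr_X^*\mc{O}_X(n)$ and recalling that, by the normalization condition in the definition of a decorated quotient swamp, $\pr_{S*}(q_i(n))\colon Y\otimes\mc{O}_S\to\pr_{S*}(E_i(n))$ is an isomorphism for $i=1,2$, the pushforward of $\psi(n)$ yields an isomorphism
\[
u:=\bigl(\pr_{S*}(q_2(n))\otimes\id_{L_S}\bigr)^{-1}\circ\pr_{S*}\bigl(\psi(n)\bigr)\circ\pr_{S*}(q_1(n))\colon\;Y\otimes\mc{O}_S\;\xrightarrow{\;\sim\;}\;Y\otimes L_S,
\]
that is, an invertible global section of $\End(Y)\otimes L_S$. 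Choosing local trivializations of $L_S$ turns $u$ into local morphisms $S\to\GL(Y)$; on overlaps these differ by the scalar transition functions of $L_S$ and hence agree after projecting to $\PGL(Y)$, so they glue to a morphism $g\colon S\to\PGL(Y)$.

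\textbf{Conclusion and main obstacle.} It remains to verify $g\cdot f_1=f_2$. Since $P_n$ is a fine moduli space, it suffices to exhibit an isomorphism between the families of decorated \emph{quotient} swamps classified by $g\cdot f_1$ and by $f_2$, that is, one respecting the quotient maps and not merely the underlying decorated swamps. By construction $u$ is exactly the base change relating $q_1$ and $q_2$ through the two normalization isomorphisms, so $\psi$ descends to an isomorphism of quotient swamps once $q_1$ is twisted by a local lift $\hat g$ of $g$; the line bundle $L_S$ together with the scalar ambiguity in lifting $g$ to $\GL(Y)$ cancels precisely against the twists recorded by $h_1$ and $h_2$. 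I expect this last step to be the only delicate point: one must confirm that a single $g$ simultaneously reconciles the quotient maps, the global decoration $\phi$, and the local decoration $s$ (whose isomorphism carries its own scalar $c$), and that the passage from $\GL(Y)$ to $\PGL(Y)$ absorbs the twist by $L_S$ without obstruction. Everything else is a routine unwinding of the definition of isomorphic families.
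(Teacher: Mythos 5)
Your proposal is correct and follows essentially the same route as the paper: the converse is immediate from equivariance of the universal family, and for the forward direction one uses the normalization isomorphisms $\pr_{S*}(q_i(n)):Y\otimes\mc{O}_S\to\pr_{S*}(E_i(n))$ to convert $\psi:E_1\to E_2\otimes\pr_S^*L_S$ into an automorphism $Y\otimes\mc{O}_S\to Y\otimes L_S$, which descends to a morphism $S\to\PGL(Y)$ after the local scalar ambiguities from trivializing $L_S$ are killed by the projection. Your write-up is in fact more explicit than the paper's (which compresses this to two sentences), and the "delicate point" you flag at the end is handled exactly as you anticipate, since $P_n$ is a fine moduli space and the compatibilities with $\phi$ and $s$ are already part of the given isomorphism of families.
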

\begin{proof}
 Suppose $f_1^*D\cong f_2^*D$. Then there is a line bundle $L_S$ on $S$ and an isomorphism $(f_1\times\id_X)^*\tilde{E}\to (f_2\times\id_X)^*\tilde{E}\otimes L_S$. This leads to an isomorphism $Y\otimes\mc{O}_{S\times X}\to Y\otimes L_S$, inducing the desired morphism $g:G\to \PGL(Y)$. The converse is clear.
\end{proof}

\subsection{S-Equivalence}
Suppose $n\ge n_4$, $a_2\delta_2<1$ and let $p \in P^{\textnormal{ss}}_n$ be a point with associated decorated swamp $(E,L,\phi,s)$.
\begin{lemma}
 The maps $\Gamma_p$ and $Q_p$ define a bijection between the set of weighted flags $(Y_\bullet,\tup{\beta})$ of $Y$, such that any one-parameter subgroups $\la$ of $\SL(Y)$ with associated flag $(Y_\bullet,\tup{\beta})$ satisfies $\mu(\la,\gies_n(p))=0$ and the set of weighted flags $(E_\bullet,\tup{\alpha})$ of $E$ such that
 \begin{equation} \label{eq:M=0}
   M(E_\bullet,\tup{\alpha}) + \delta_1 \mu_1(E_\bullet,\tup{\alpha},\phi)
+ \delta_2\mu_2(E_\bullet,\tup{\alpha},s) = 0\,.
 \end{equation}
\end{lemma}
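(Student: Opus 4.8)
The plan is to exploit a two-sided squeeze coming from the two comparison propositions. Since $p\in P^{\textnormal{ss}}_n$, the Hilbert--Mumford criterion (\autoref{Hilbert-Mumford-Krit}) gives $\mu(\la,\gies_n(p))\ge 0$ for every one-parameter subgroup $\la$. Since $n\ge n_4\ge n_3$, the swamp $(E,L,\phi,s)$ is $(\delta_1,\delta_2,n)$-section-semistable (\autoref{cor:stable<=>section_stable}), so $M^{\textnormal{s}}(E_\bullet,\tup{\alpha},n)+\delta_1\mu_1+\delta_2\mu_2\ge 0$ for every weighted flag of $E$; and Riemann--Roch gives $M(E_\bullet,\tup{\alpha})=M^{\textnormal{s}}(E_\bullet,\tup{\alpha},n)+r\sum_j\alpha_j h^1(E_j(n))\ge M^{\textnormal{s}}(E_\bullet,\tup{\alpha},n)$, with equality precisely when $h^1(E_j(n))=0$ for all $j$. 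These three inputs, together with \autoref{prop:GIT-stable=>stable} and \autoref{prop:stable=>GIT-stable}, will force the relevant inequalities to become equalities and thereby pin down the combinatorial data.

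First I would show $\Gamma_p$ sends a critical flag to the target set. Let $(E_\bullet,\tup{\alpha})$ satisfy \eqref{eq:M=0} and put $(Y_\bullet,\tup{\beta})=\Gamma_p(E_\bullet,\tup{\alpha})$ with associated $\la$. Chaining \autoref{prop:GIT-stable=>stable}(1) with the inputs above yields
\[
 0\le \frac{\mu(\la,\gies_n(p))}{zp(n)}\le M^{\textnormal{s}}(E_\bullet,\tup{\alpha},n)+\delta_1\mu_1+\delta_2\mu_2\le M(E_\bullet,\tup{\alpha})+\delta_1\mu_1+\delta_2\mu_2=0\,.
\]
Hence every inequality is an equality: in particular $\mu(\la,\gies_n(p))=0$, so $(Y_\bullet,\tup{\beta})$ lies in the required set; moreover $M=M^{\textnormal{s}}$, so $h^1(E_j(n))=0$ for all $j$, and the equality clause of \autoref{prop:GIT-stable=>stable}(2) gives $\len(Y_\bullet)=\len(E_\bullet)$ with $E_j$ generically generated by $Y_j$. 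Symmetrically, for $(Y_\bullet,\tup{\beta})$ with $\mu(\la,\gies_n(p))=0$ I set $(E_\bullet,\tup{\alpha})=Q_p(Y_\bullet,\tup{\beta})$; the hypothesis $a_2\delta_2<1$ lets me invoke \autoref{prop:stable=>GIT-stable}(1), giving
\[
 0=\frac{\mu(\la,\gies_n(p))}{zp(n)}\ge M^{\textnormal{s}}(E_\bullet,\tup{\alpha},n)+\delta_1\mu_1+\delta_2\mu_2\ge 0\,.
\]
Again equality holds throughout, so by the equality clause of \autoref{prop:stable=>GIT-stable}(2) one has $\len(Y_\bullet)=\len(E_\bullet)$, $h^1(F_j(n))=0$, $E_j$ generated by $Y_j$ and $Y_j=H^0(q_p(n))^{-1}(H^0(E_j(n)))$. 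Since then $h^1(E_j(n))=0$, the Riemann--Roch relation gives $M=M^{\textnormal{s}}$, whence $(E_\bullet,\tup{\alpha})$ is critical.

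It remains to check that $\Gamma_p$ and $Q_p$ are mutually inverse, and this is where the main difficulty lies: the equality clause of \autoref{prop:GIT-stable=>stable} yields only \emph{generic} generation of $E_j$ by $Y_j$, whereas the bookkeeping of $Q_p$ (which discards every step with $h^1(F_h(n))\ne 0$) requires honest generation and $F_j=E_j$. The plan is to close this gap using boundedness: by \autoref{prop:swamps_bounded} the subbundles $E_j$ that occur range over a bounded family, so for $n\ge n_4$ chosen large enough each $E_j(n)$ is globally generated. Then $E_j(n)$ is the subsheaf of $E(n)$ generated by $H^0(E_j(n))=H^0(F_j(n))$, which is exactly $F_j(n)$; hence $F_j=E_j$, no step is discarded, and $h^1(F_j(n))=h^1(E_j(n))=0$. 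With this, starting from a critical $(E_\bullet,\tup{\alpha})$ the index sets $J(h)$ and $H(j)$ are singletons, the recovered subbundles coincide with the $E_j$, and the weights satisfy $\alpha_j=\sum_{h\in H(j)}\beta_h=\beta_j$, so $Q_p\circ\Gamma_p=\id$. Conversely, the relation $Y_j=H^0(q_p(n))^{-1}(H^0(E_j(n)))$ furnished by the equality clause of \autoref{prop:stable=>GIT-stable}(2) is precisely the defining relation of $\Gamma_p$, so $\Gamma_p\circ Q_p=\id$, completing the bijection.
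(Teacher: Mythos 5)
Your overall architecture is the paper's: the same two-sided squeeze $0\le \mu(\la,\gies_n(p))/zp(n)\le M^{\textnormal{s}}+\delta_1\mu_1+\delta_2\mu_2\le M+\delta_1\mu_1+\delta_2\mu_2$ (and its reverse via \autoref{prop:stable=>GIT-stable}), with the equality clauses of the two comparison propositions pinning down the combinatorics. Your first two paragraphs are correct and match the paper. The problem is the step where you verify $Q_p\circ\Gamma_p=\id$ on critical flags. You propose to close the ``generic versus honest generation'' gap by citing \autoref{prop:swamps_bounded} to claim the subbundles $E_j$ occurring in critical flags form a bounded family and then enlarging $n$ so that each $E_j(n)$ is globally generated. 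This does not work as stated: \autoref{prop:swamps_bounded} only bounds slopes of subbundles \emph{from above}, which does not bound the family of all subbundles (sub-line-bundles of arbitrarily negative degree are not excluded). The extra constraint you do have for critical flags, $h^1(E_j(n))=0$, yields only the $n$-dependent lower bound $\deg(E_j)\ge -\rk(E_j)(n+1-g)$, so the family you would need to bound grows with $n$ and the prescription ``choose $n\ge n_4$ large enough'' is circular. Moreover, $n$ has already been fixed before this lemma is invoked; you are not free to enlarge it here.

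The gap is easy to repair, and the repair is exactly the paper's proof: once you know $\mu(\la,\gies_n(p))=0$ for $\la$ associated to $(Y_\bullet,\tup{\beta})=\Gamma_p(E_\bullet,\tup{\alpha})$, apply \autoref{prop:stable=>GIT-stable} to \emph{this} flag. Its part (1) together with section-semistability forces equality, and its equality clause (2) hands you precisely what you were trying to manufacture: $h^1(F_j(n))=0$ (so no step of $Q_p$ is discarded), $E_j$ honestly generated by $Y_j$, $\len(Y_\bullet)=\len(E_\bullet)$, and $Y_j=H^0(q_p(n))^{-1}(H^0(E_j(n)))$. Combined with the generic-generation statement from \autoref{prop:GIT-stable=>stable}(2), which identifies the subbundles produced by $Q_p$ with the $E_j$, the index sets $J(h)$ and $H(j)$ are singletons and the weights match, giving $Q_p\circ\Gamma_p=\id$ without any new boundedness input. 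Your treatment of the direction $\Gamma_p\circ Q_p=\id$ is fine.
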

\begin{proof}
 If $\la$ is a one-parameter subgroup with associated flag $(Y_\bullet,\tup{\beta})$ and $\mu(\la,\gies_n(p))=0$ then by \autoref{prop:stable=>GIT-stable} the corresponding weighted flag $(E_\bullet,\tup{\alpha})=Q_p(Y_\bullet,\tup{\beta})$ satisfies \eqref{eq:M=0}. Again \autoref{prop:stable=>GIT-stable} shows $\Gamma_p\circ Q_p(Y_\bullet,\tup{\beta})=(Y_\bullet,\tup{\beta})$. 

Let $(E_\bullet,\tup{\alpha})$ be a weighted flag satisfying \eqref{eq:M=0}. By \autoref{prop:GIT-stable=>stable} we find $\mu(\la,\gies_n(p))=0$ for any one-parameter subgroup $\la$ with associated flag $(Y_\bullet,\tup{\beta})=\Gamma_p(E_\bullet,\tup{\alpha})$. Then $E_j$ is generically generated by $Y_j$. By \autoref{prop:stable=>GIT-stable} we get $Q_p(Y_\bullet,\tup{\beta})=(E_\bullet,\tup{\alpha})$.
\end{proof}

\begin{prop}
 Let $\la$ be a one-parameter subgroup of $\SL(Y)$ with $\mu(\la,\gies_n(p))=0$, $p_\infty:=\lim_{t\to \infty}\la(t)\cdot p$ the limit point and $(Y_\bullet,\tup{\beta})$ the associated weighted flag. Then, the admissible deformation of $(E,L,\phi,s)$ along $(E_\bullet,\tup{\alpha}):=Q_p(Y_\bullet,\tup{\beta})$ is isomorphic to the decorated swamp defined by $p_\infty$.
\end{prop}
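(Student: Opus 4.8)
The plan is to exploit the $\GL(Y)$-equivariance and the injectivity of $\gies_n$ in order to reduce the assertion to a limit computation in the three factors of $\Gies_n$. Since $\gies_n(\la(t)\cdot p)=\la(t)\cdot\gies_n(p)$ and $\gies_n$ is injective, the fixed point $p_\infty$ is determined by the three limits $\lim_{t\to\infty}\la(t)\cdot[M]$, $\lim_{t\to\infty}\la(t)\cdot[T_1]$ and $\lim_{t\to\infty}\la(t)\cdot[T_2]$. Before computing them I would record the consequences of the hypothesis $\mu(\la,\gies_n(p))=0$: by the preceding lemma the flag $(E_\bullet,\tup{\alpha})=Q_p(Y_\bullet,\tup{\beta})$ satisfies \eqref{eq:M=0}, and the equality clause of \autoref{prop:stable=>GIT-stable} yields $\len(Y_\bullet)=\len(E_\bullet)$, $h^1(E_j(n))=0$, $Y_j=H^0(q(n))^{-1}(H^0(E_j(n)))$, and the vanishing of the torsion quotients $T_h=E'_h/F_h$, so that each $E_j=F_j$ is a subbundle generated by $Y_j$.

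For the underlying bundle I would invoke the classical description of limits in the Quot scheme (cf.\ \cite[\S4.4]{HuyLehn}). Writing $Y=\bigoplus_{i=1}^{k+1}Y^i$ for the weight decomposition of $\la$, the subspace $Y_j=\bigoplus_{i\le j}Y^i$ generates $E_j$, and hence the limit point of $\la(t)\cdot[q]$ in $\Quot_n$ is represented by the quotient
\[
 q_\infty\colon\ \bigoplus_{i=1}^{k+1} Y^i\otimes\mc{O}_X(-n)\ \longrightarrow\ \bigoplus_{i=1}^{k+1} E_i/E_{i-1}\,,
\]
in which $Y^i\otimes\mc{O}_X(-n)$ surjects onto $E_i/E_{i-1}$. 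Its target is exactly $E_{\gr}=\bigoplus_i E_i/E_{i-1}$, which identifies the bundle underlying $p_\infty$ with $E_{\gr}$; since $\la\subset\SL(Y)$ acts trivially on the $\Jac^l$ and $\Jac^d$ factors, the line bundle $L$ and the determinant are unchanged.

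The heart of the proof is the identification of the two limit decorations with $\phi_{\gr}$ and $s_{\gr}$. The subgroup $\la$ acts on $E_\rho$ through $\rho_*\la$, whose weighted flag is precisely $E^{(1)}_\bullet$, so the weight grading of the tensor representative $[T_1]\in\Gies_2$ is indexed by the graded pieces $E^{(1)}_i/E^{(1)}_{i-1}$, with weight increasing in $i$. By the limit rule for a point of a projective space under a one-parameter subgroup (the computation underlying \eqref{eq:stab_in_Gies_1}), $\lim_{t\to\infty}\la(t)\cdot[T_1]$ retains exactly the component of smallest weight on which $\phi$ does not vanish. That smallest index is $i_0=\min\{i\mid\phi_{|E^{(1)}_i}\neq 0\}$, and the surviving component is the map $\phi_0\colon E^{(1)}_{i_0}/E^{(1)}_{i_0-1}\to L$ extended by zero on the remaining graded pieces --- which is exactly the definition of $\phi_{\gr}$. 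Running the identical argument with $\sigma$ in place of $\rho$ identifies $\lim_{t\to\infty}\la(t)\cdot[T_2]$ with $s_{\gr}$; here the vanishing of the torsion quotients recorded above is what guarantees that $E^{(2)}_\bullet$ restricts to $x_0$ without a torsion correction, so that the leading term is genuinely $s_0$.

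Assembling the three limits, $p_\infty$ is the decorated quotient swamp with underlying quotient $q_\infty$ and decorations $\phi_{\gr}$, $s_{\gr}$, so that the decorated swamp defined by $p_\infty$ is $(E_{\gr},L,\phi_{\gr},s_{\gr})=\opname{df}_{(E_\bullet,\tup{\alpha})}(E,L,\phi,s)$, as claimed. I expect the main obstacle to lie in the weight bookkeeping of the third paragraph: one must check that the $\CC^*$-weights on the Gieseker tensor spaces $Y_{a_1,b_1}$ and $Y_{a_2,b_2}$ match, under the surjections $E_{a_1,b_1}\to E_\rho\otimes\det(E)^{\otimes c_1}$ and $E_{a_2,b_2}\to E_\sigma\otimes\det(E)^{\otimes c_2}$, the weights of the induced flags $E^{(1)}_\bullet$ and $E^{(2)}_\bullet$, so that the surviving minimal-weight constituent is indeed $\phi_0$ (resp.\ $s_0$) and not another graded summand. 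The equality case of \autoref{prop:stable=>GIT-stable}, which forces all relevant subsheaves to be subbundles generated in the expected degree, is precisely what makes this matching exact.
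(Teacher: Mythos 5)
Your strategy is genuinely different from the paper's. The paper never computes the limit in the Gieseker space: it writes down an explicit family of decorated quotient swamps over $\Spec(\CC[t])$, obtained by rescaling the splitting $Y=\bigoplus Y^i$ by $t^{\ga_i-\ga_1}$ and twisting $\phi$ and $s$ by $t^{\ga^{(1)}_{i_0}-\ga^{(1)}_{1}}$ and $t^{\ga^{(2)}_{j_0}-\ga^{(2)}_{1}}$; this family restricts to $(E,L,\phi,s)$ at $t=1$, to $\opname{df}_{(E_\bullet,\tup{\alpha})}(E,L,\phi,s)$ at $t=0$, and its classifying map is $t\mapsto \la(t)^{-1}\cdot p$. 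That construction buys two things at once: the existence of $p_\infty$ as a point of $P_n$ (the classifying map extends over $t=0$ by construction), and the identification of the special fibre with the admissible deformation without any bookkeeping on the ambient tensor spaces. Your route --- compute the three limits in $\Gies_1\times_{\Jac^d}\Gies_2\times\Gies_3$ and pull back along the injective equivariant $\gies_n$ --- is workable, but note that to know the limit of $\la(t)\cdot\gies_n(p)$ is of the form $\gies_n(p_\infty)$ for some $p_\infty\in P_n^{\textnormal{ss}}$ at all, you need the properness of $\gies_n^{\textnormal{ss}}$ (\autoref{prop:properness}); injectivity and equivariance alone do not give you a point of $P_n$ to compare with.

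The genuine gap is the one you flag yourself at the end and do not close. The decorations $[T_1]$ and $[T_2]$ are linear forms on $Y_{a_1,b_1}$ and $Y_{a_2,b_2}$, graded by the flag $Y_\bullet$, whereas $\phi_\gr$ and $s_\gr$ are defined through the flags $E^{(1)}_\bullet$ and $E^{(2)}_\bullet$ induced by $\rho_*\la$ and $\sigma_*\la$. The assertion that the minimal-$Y$-weight constituent of $T_m$ surviving in the limit is exactly the constituent supported on $E^{(m)}_{i_0}/E^{(m)}_{i_0-1}$, and that it induces $\phi_0$ (resp.\ $s_0$) there, is precisely the matching $\nu_{h(j)}(\tup{h}^{(m)})=\nu_j(\tup{j}^{(m)})$ carried out in \autoref{prop:GIT-stable=>stable} and \autoref{prop:stable=>GIT-stable}, combined with the equality clauses of those propositions (each $E_j$ generated by $Y_j$, vanishing of the torsion $T_h$, $h^1(E_j(n))=0$). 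As written, your third paragraph asserts the outcome of this computation rather than performing it, so the limit is identified only up to the ambiguity of which minimal-weight summand of $Y_{a_m,b_m}$ survives and what it maps to under the surjection onto $E_{\gr,\rho}$ (resp.\ $E_{\gr,\sigma}|_{\{x_0\}}$). If you prefer to keep your route, you must carry out that matching explicitly; the paper's $\CC[t]$-family is exactly the device that makes it automatic, since the twist is applied directly to $\phi$ on $E_{S,\rho}$ rather than to a tensor representative on $Y_{a_1,b_1}$.
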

\begin{proof}
 We choose a splitting $Y=\bigoplus_{i=1}^{\len(Y_\bullet)+1} Y^i$ such that $Y_j=\bigoplus_{i=1}^j Y^i$ for $j=1,\ldots,\len(Y_\bullet)$. Define $\tup{\ga}:=\tup{\ga}(Y_\bullet,\tup{\alpha})$ and let $\mc{Y}$ denote the sheaf   
  \[
  \mc{Y}:=\bigoplus_{i=1}^{\len(Y_\bullet)+1} Y^i \otimes t^{\ga_i-\ga_1 }\mc{O}_{S}=\sum_{j=1}^{\len(Y_\bullet)+1}Y_j\otimes t^{\ga_j-\ga_1}\mc{O}_{S}
 \]
 on $S:=\Spec(\CC[t])$. We consider the isomorphism $\psi:Y\otimes \mc{O}_S\to \mc{Y}$ given by $\psi(y\otimes 1)=y\otimes t^{\ga_i-\ga_1}$ for $y\in Y^i$ and the quotient
\[
 q_S: Y\otimes \pr_X^*\mc{O}_X(-n) \stackrel{\psi}{\to} \pr_S^*\mc{Y}\otimes \pr_X^*\mc{O}_X(-n)\to \sum_{j=1}^{\len(Y_\bullet)+1} t^{\ga_j-\ga_1}\pr_X^*E_j =: E_S
\]
on $S\times X$ induced by $q$.

Let $(F_\bullet,\tup{\alpha}^{(1)})$ and $(G_\bullet,\tup{\alpha}^{(2)})$ denote the induced weighted flags of $E_\rho$ and $E_\sigma$ respectively and set $\tup{\ga}^{(1)}:=\tup{\ga}(F_\bullet,\tup{\alpha}^{(1)})$,  $\tup{\ga}^{(2)}:=\tup{\ga}(G_\bullet,\tup{\alpha}^{(2)})$. Then the associated bundles of $E_S$ are 
\[
  E_{S,\rho}=\sum_{i=1}^{\len(F_\bullet)+1} t^{\ga^{(1)}_i-\ga^{(1)}_{1}}\pr_X^*F_i\,, \qquad 
  E_{S,\sigma}=\sum_{j=1}^{\len(G_\bullet)+1} t^{\ga^{(2)}_j-\ga^{(2)}_{1}}\pr_X^*G_j\,.
\]
Define $i_0:=\min\{i\,|\,\phi_{|F_i}\neq 0\}$ and $j_0:=\min\{j\,|\,s_{|G_j}\neq 0\}$ and consider the homomorphisms
\begin{align*}
 \phi_S: E_{S,\rho} \to t^{\ga^{(1)}_{i_0}-\ga^{(1)}_{1}}\pr_X^*L\,, \qquad
 s_S: E_{S,\sigma|S\times\{x_0\}} \to t^{\ga^{(2)}_{j_0}-\ga^{(2)}_{1}}\mc{O}_{S}\,
\end{align*}
induced by $\phi$ and $s$. Let $\ka_S:S\to \Jac^l$ be the constant map determined by $L$ and set $N_S:=t^{\ga'_{i_0}-\ga_1'}\mc{O}_S$. These data define a family of decorated quotient swamps $D:=(q_S,\ka_S,N_S,\phi,s)$. One checks that $D_{t=1}=(E,L,\phi,s)$ while $D_{t=0}=\opname{df}_{(E_\bullet,\tup{\alpha})}(E,L,\phi,s)$. The family $D$ also determines a morphism $f:S\to P_n$ such that $f(t)=\la(t)^{-1}\cdot p$. This proves the claim.
\end{proof}

\subsection{Properness of the Gieseker Morphism}
\begin{prop} \label{prop:properness}
Suppose $a_2\delta_2<1$. Then, there is an $n_4\ge n_3$ such that for $n\ge n_4$ the restriction
 \[
  \gies^{\textnormal{ss}}:=\gies_{n|P^{\textnormal{ss}}}:P^{\textnormal{ss}}_n \to \Gies_n^{\textnormal{ss}}
 \]
is proper.
\end{prop}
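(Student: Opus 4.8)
The plan is to verify the valuative criterion for properness. The source $P_n$ is a locally closed subscheme of a tower of projective bundles over the separated base $\Jac^d\times\Jac^l$, hence separated and of finite type over $\CC$; as the target $\Gies_n$ is likewise separated, the morphism $\gies^{\textnormal{ss}}$ is automatically separated and of finite type, so only universal closedness is at issue. Uniqueness of lifts is then guaranteed by separatedness (and is in any case clear from the injectivity of $\gies_n$), so the real content is existence. Thus let $R$ be a discrete valuation ring with fraction field $K$ and closed point $0$, and suppose we are given $g_K\colon\Spec K\to P^{\textnormal{ss}}_n$ together with $h\colon\Spec R\to\Gies^{\textnormal{ss}}_n$ extending $\gies_n\circ g_K$. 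The goal is to produce a lift $g\colon\Spec R\to P^{\textnormal{ss}}_n$ with $\gies_n\circ g=h$.

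First I would extend all the underlying geometric data. The projection of $h$ to the base $\Jac^d\times\Jac^l$ already extends the generic determinant and the line bundle, so over $X_R:=\Spec R\times X$ we obtain a line bundle $L_R$ of the correct degree. The generic quotient $q_K\colon Y\otimes\mc{O}_{X_R}(-n)\to E_K$ is a $K$-point of Grothendieck's Quot scheme $\Quot_n$, which is projective and hence proper; this $K$-point therefore extends uniquely to an $R$-point, yielding a flat family of quotients $\bar{q}\colon Y\otimes\mc{O}_{X_R}(-n)\to\bar{E}$ whose generic fibre is $q_K$ and whose special fibre is a quotient $\bar{q}_0$ on $X_0\cong X$.

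The hard part is to show that this special fibre lies in $\Quot_n^0$, that is, that $\bar{E}_0$ is a vector bundle for which $H^0(\bar{q}_0(n))$ is an isomorphism, so that the decorations can be carried along and the comparison results apply. This is exactly where the linearization and the weight computations of the previous section enter. Were $\bar{E}_0$ to carry torsion, or were $H^0(\bar{q}_0(n))$ to fail to be injective, one produces from the offending subsheaf a subspace of $Y$ and hence, by the construction underlying $\Gamma$, a one-parameter subgroup $\la$ of $\SL(Y)$; evaluating the weight via \eqref{eq:stab_in_Gies_0}--\eqref{eq:stab_in_Gies_2} and bounding the section contribution by \eqref{eq:estimate_mu(T_2)} shows, precisely because $a_2\delta_2<1$ (as in the proof of \autoref{prop:stable=>GIT-stable}), that $\mu(\la,h(0))<0$, contradicting the GIT-semistability of $h(0)$. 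Boundedness (\autoref{prop:swamps_bounded}) together with the Le Potier--Simpson estimate, packaged in \autoref{lem:h1=0=>positive}, fixes the threshold $n_4\ge n_3$ beyond which $h^1(\bar{E}_0(n))=0$ and $\bar{E}_0(n)$ is globally generated, so that indeed $\bar{q}_0\in\Quot_n^0$.

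Once the special fibre lies in $\Quot_n^0$, the projective bundles $P_1$, $I$ and $P$ are defined over it, and since each is proper over its base the generic decorations $\phi_K$ and $s_K$ extend to limiting data $\phi_0$ and $s_0$ over the closed point, producing a point $p_0\in P$ above the closed fibre. Because the morphisms $\iota_1,\iota_2,\iota_3$ extend over these projective bundles and $\Gies_n$ is separated, $\gies_n(p_0)=h(0)$, which is GIT-semistable by hypothesis. By \autoref{cor:GIT-stable<=>stable} the decorated swamp attached to $p_0$ is then $(\delta_1,\delta_2,n)$-section-semistable, hence by \autoref{cor:stable<=>section_stable} it is $(\delta_1,\delta_2)$-semistable, so $p_0\in P^{\textnormal{ss}}_n$. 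The resulting morphism $g\colon\Spec R\to P^{\textnormal{ss}}_n$ is the desired lift, and universal closedness --- hence properness of $\gies^{\textnormal{ss}}$ --- follows.
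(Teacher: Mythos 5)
Your overall strategy --- valuative criterion, extension of the quotient via properness of $\Quot_n$, extension of the decorations via properness of auxiliary projective bundles, and GIT-semistability of $h(0)$ to force the limit into $\Quot_n^0$ --- matches the paper's. But two steps that carry the real weight of the argument are missing or wrong.

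First, the torsion issue. The limit $\bar q_0:Y\otimes\mc{O}_X(-n)\to\bar E_0$ in $\Quot_n$ will in general have a torsion quotient, and the weight formulas \eqref{eq:stab_in_Gies_0}--\eqref{eq:stab_in_Gies_2} are set up for a generically surjective map onto a \emph{vector bundle}; torsion in $\bar E_0$ does not by itself hand you a subspace of $Y$ to destabilize with. The paper's move is to replace $Q_R$ by its reflexive hull $E_R:=Q_R^{\vee\vee}$, which is locally free on the regular surface $\Spec(R)\times X$, at the price that the induced map $q$ on the special fibre is a priori only generically surjective. One then extends $\ka$, $\phi$ and $s$ over $\Spec(R)$ using $E_R$ --- and this must happen \emph{before} the semistability argument, since it is the uniqueness of the extension of $\gies_n\circ g_K$ that identifies $h(0)$ with the Gieseker point of the special-fibre data --- and only at the end proves that $q$ is surjective. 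Your proposal never modifies the sheaf, so when $\bar E_0$ has torsion there is nothing to which the weight formulas apply, and the claimed contradiction $\mu(\la,h(0))<0$ is not actually derived.

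Second, and more seriously, your appeal to \autoref{prop:swamps_bounded} and \autoref{lem:h1=0=>positive} to get $h^1=0$ and global generation of the limit is circular: both results are statements about $(\delta_1,\delta_2)$-semistable decorated swamps, and whether the special fibre underlies such an object is exactly what is being proved. The paper instead extracts boundedness of the possible limits directly from GIT-semistability of $h(0)$: applying the Hilbert--Mumford weight to the one-parameter subgroup attached to $U=\ker(Y\to H^0(Q(n)))$, where $Q$ is a quotient of $E$ of minimal slope, yields $\mu_{\min}(E)\ge\mu(E)-g-(a_1\delta_1+a_2\delta_2)/\rk(E)$, hence a bounded family, hence a threshold $n_4$ with $h^1(E(n))=0$ and $E(n)$ globally generated. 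Combined with the other weight computation (the flag $0\subset\ker H^0(q(n))\subset Y$, where $a_2\delta_2<1$ forces that kernel to vanish), this gives $h^0(E(n))=p(n)$, $H^0(q(n))$ an isomorphism, and surjectivity of $q$. Without this separate boundedness argument your proof does not close.
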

\begin{proof}
 We check the valuative criterion for discrete valuation rings: let $R$ be a discrete valuation ring, $Q$ its field of fractions and consider the diagram
 \[\xymatrix{
  \Spec(K) \ar[r]^-f \ar[d]_p & P^{\textnormal{ss}}_n \ar[d]^{\gies_n^{\textnormal{ss}}}\\
  \Spec(R) \ar[r]^-h &  \Gies_n^{\textnormal{ss}}\rlap{\,.}
  }
 \]
 The morphism $f$ defines a family of decorated quotient swamps $(q'_K,\kappa,N_1,N_2,\phi,s)$ on $\Spec(K)$. Since $\opname{Quot}$ is projective, the quotient $q'_K$ extends uniquely to $\Spec(R)$. The coherent sheaf $Q$ over the special point $0$ may have torsion, we consider the modified homomorphism
 \[
  q_R: Y\otimes \pr_X^*\mc{O}_X(-n) \to Q_R \to E_R:=Q_R^{\vee\vee}\,.
 \]
As a reflexive sheaf on a regular surface the sheaf $E_R$ is locally free (see \cite[1.3, 1.4]{Hart1980}). The composition $\ka_R:=\pr_{\Jac^d}\circ h$ is the unique extension of $\ka_K$ to $\Spec(R)$. Using $E_R$ and $\ka_R$ one constructs parameter space $\Phi$, projective over $\Spec(R)$, with a universal homomorphism $\tilde{\phi}:E_{R,\rho}\to \mc{O}_{\Phi}(1)\otimes (\ka_R\times\id_X)^*\mc{L}$. This allows to extend $N_1$ and $\phi_K$ to $R$. Finally, the homomorphism $s_K$ defines a morphism $\Spec(K)\to \Psi:=\PP(Y_{a_2,b_2})$, which has a unique extension $\Spec(R)\to \Psi$. This defines the family $(q_R,\ka_R,N_1,N_2,\phi_R,s_R)$ parameterized by $\Spec(R)$, which induces the morphism $h:\Spec(R)\to \Gies_n^{\textnormal{ss}}$. 

It remains to show that $q_R$ is a quotient. Let $q:Y\otimes \mc{O}_X(-n)\to E$ be the restriction of $q_R$ to the special fiber and let $U$ be the kernel of $H^0(q(n))$. GIT-semistability with respect to the weighted flag $(0\subset U\subset Y,(1))$ implies
\begin{equation} \label{eq:eigentlichkeit}
\begin{aligned}
 0  \le\, & \eta \mu(\lambda,[M])+\eta_1\mu(\lambda,[T_1])+\eta_2\mu(\lambda,[T_2])\\
   \le\, & zp(n)r(\delta_2a_2 -\dim(U))\,.
\end{aligned}
\end{equation}
Due to our assumption for $\delta_2$ this shows $U=\{0\}$.

Let now $E\to Q$ be a quotient of minimal slope and set $U:=\ker(Y\to H^0(Q(n)))$. GIT-semistability with respect to the flag $(\{0\}\subset U\subset Y,(1))$ yields
\begin{align*}
 0  \le & (p(n)-a_1\delta_1-a_2\delta_2)(p(n)\rk(F)-r\dim(U)) +(\delta_1ra_1+\delta_2ra_2)(p(n)-\dim(U))\\
 = & p(n)\big[(p(n)\rk(F)-r\dim(U))+(a_1\delta_1+a_2\delta_2)(r-\rk(F))\big]\,,
\end{align*}
where $F\subset E$ is the subsheaf generated by $U$. Using $\dim(U)\ge \dim(Y)-h^0(Q(n))$ and $\rk(E)\ge \rk(Q)+\rk(F)$ we find
\[
 \mu(E)+n+1-g=\frac{p(n)}{\rk(E)}\le
\frac{h^0(Q(n))}{\rk(Q)}+\frac{a_1\delta_1+a_2\delta_2}{\rk(E)}\,.
\]
Since $Q$ is semistable we have $h^0(Q(n))\le \rk(Q)[\mu(Q)+n+1]_+$ and thus
\[
 \mu_{\min}(E)=\mu(Q) \ge  \mu(E) -g -\frac{a_1\delta_1+a_2\delta_2}{\rk(E)}\,.
\]
This shows that the class of vector bundles $E$ arising from this construction is bounded. In particular there is an $n_4\ge n_3$, such that $h^1(E(n))=0$ and $E(n)$ is globally generated for $n\ge n_4$. Then, $h^0(E(n))=p(n)$ and $H^0(q(n))$ is an isomorphism. It follows that $q$ is surjective.

It is clear that $s_R$ factors via $E_{\sigma|\{x_0\}}$. Hence the family $(q_R,\ka_R,N_1,N_2,\phi_R,s_R)$ defines a morphism $\bar{f}:\Spec(R)\to P^{\textnormal{ss}}_n$ extending $f$. 
\end{proof}

\begin{proof}[Proof of \autoref{main_thm}]
 By \autoref{prop:Mumford} the projective good quotient $\Gies^{\textnormal{ss}}_n/\!\!/\SL(Y)$ and the geometric quotient $\Gies^{\textnormal{s}}_n/\SL(Y)$ exist. Since $\gies_n^{\textnormal{ss}}$ is proper and injective, hence quasi-finite, it is also finite by \cite[8.11.1]{EGAIV-3}. In particular, it is affine. Therefore, by \autoref{prop:affine_morphism->quotient} the projective good quotient $P_n^{\textnormal{ss}}/\!\!/\SL(Y)$ and the geometric quotient $P_n^{\textnormal{s}}/\SL(Y)$ also exist. These are also the quotients modulo $\PGL(Y)$, and since the projection $\SL(Y)\to \PGL(Y)$ has finite kernel, the notions of (semi-)stability with respect to these groups coincide. Now, \autoref{prop:good_quot_is_coarse_moduli} shows that these quotients are the coarse moduli spaces of $(\delta_1,\delta_2)$-(semi-)stable decorated swamps.
\end{proof}

%%%%%%%%%%%%%%%%%%%%%%%%%%%%%%%%%%
\section{Examples}
Immediate examples of our construction are the decorated vector bundles: A \emph{decorated vector bundle} is a pair $(E,s)$ consisting of vector bundle $E$ and a point $s\in \PP(E_{\sigma})_{|\{x_0\}}$. If we consider the trivial representation $\rho:\GL(r)\to \{1\}$, then the category of decorated swamps of type $(d,0)$ is equivalent to the category of decorated vector bundles of degree $d$. The function $\mu_1(E_\bullet,\tup{\alpha},\phi)$ is then always zero and one may effectively substitute $\delta_1=0$ in all calculations.

\subsection{Parabolic Vector Bundles}
Fix a sequence $0<r_1<\ldots,<r_{k+1}=r$ and let $F:=\Fl(\CC^r,\tup{r})$ denote the variety of flags of type $\tup{r}$. Let $\sigma$ be the natural action of $\GL(r)$ on $F$. A decorated vector bundle $(E,s)$ is then a quasi-parabolic vector bundle, i.e. a vector bundle $E$ with a flag $U_\bullet$ of type $\tup{r}$ in $E_{|\{x_0\}}$. The variety $F$ has a natural embedding into a product of Grassmannians. If we linearize the action in $\mc{O}_F(\beta_1,\ldots,\beta_k)$ we find that $(E,s)$ is  $\delta_2$-(semi-)stable if and only if
\[
 \frac{\opname{pardeg}(F)}{\rk(F)} (\le) \frac{\opname{pardeg}(E)}{\rk(E)} 
\]
holds for all subbundles $F$ with the \emph{parabolic degree} given by
\[
 \opname{pardeg}(F):= \deg(F) + \delta_2\sum_{i=1}^k \beta_i \dim( F_{|\{x_0\}}\cap U_i)\,.
\]
With the \emph{parabolic weights} $\tilde{\alpha}_i:=\delta_2 \sum_{j=i}^k\beta_j$ we recover the known stability condition from Mehta--Seshadri \cite{mehta80}.

\begin{rem}
 For the construction of the moduli space we use the embedding $F\to \PP(V_2)$ with
 \[
  V_2:=\bigotimes_{i=j}^k \left(\bigwedge^{r-r_j}V\right)^{\otimes \beta_j}\,.
 \]
This representation is polynomial and homogeneous of degree $a_2=\sum_{j=1}^k \beta_j(r-r_j)$. Due to the condition $a_2\delta_2<1$ we can only construct the moduli space for small weights. However, if in the construction of the Gieseker space we replace $\PP(Y_{a_2,b_2})$ by $\prod_{i=1}^k\opname{Gr}(Y,r-r_j)$ then the estimate \eqref{eq:estimate_mu(T_2)} can be improved to
\[
  -\sum_{j=1}^{\len(Y_\bullet)} \alpha_j\dim(Y_j) \sum_{i=1}^l\beta_i(r-r_i) \le \mu(\la,[T_2])\le \sum_{j=1}^{\len(Y_\bullet)} \alpha_j\dim(Y_j) \sum_{i=1}^l \beta_i \left(p(n)-(r-r_i) \right)\,.
\]
Using this one can relax the condition on $\delta_1$ in \autoref{prop:stable=>GIT-stable} and \autoref{prop:properness} to $\tilde{\alpha}_1<1$. We can therefore construct the moduli space for all admissible weights $\tilde{\alpha}_k<\ldots<\tilde{\alpha}_1<1$.
\end{rem}

\subsection{Vector Bundles with Level Structure}
Let $W$ be a complex vector space of dimension $r$. A level structure on a vector bundle $E$ is supposed to describe a trivialization $f:E_{|\{x_0\}}\to W$. In order to obtain a projective moduli space one needs to enlarge the category. 

Seshadri \cite{seshadri1982} introduced the \emph{level structure} as a homomorphism $f:E_{|\{x_0\}}\to W$. Two vector bundles with a level structure $(E,f)$ and $(E',f')$ are said to be \emph{isomorphic}, if there is an isomorphism $\psi:E\to E'$ such that $f'\circ \psi_{|\{x_0\}}=f$. If $\sigma$ is the natural action of $\GL(r)$ on $\PP(\Hom(\CC^r,W)^\vee)$ then the category of vector bundles with a level structure is equivalent to the category of decorated vector bundles. From \autoref{def:stability} it follows, that such an object is $\delta_2$-(semi-)stable if and only if every subbundle $F$ of $E$ satisfies
\[
 (\deg(E)-\delta_2)\rk(F)-(\deg(F)-\delta_2 c(F,f))\rk(E) (\ge) 0\,.
\]
where
\[
 c(F,f):=\begin{cases}0  & f_{|F_{|\{x_0\}}}=0 \\
		     1 &  f_{|F_{|\{x_0\}}}\neq 0 
	\end{cases}\,.
\]
This is equivalent to the stability condition of Seshadri \cite[\S 4, I, D\'ef. 2]{seshadri1982}.

Another approach uses completed homomorphisms: Consider $H:=\prod_{i=1}^r \left(\End(\bigwedge^i \CC^r )\setminus \{0\}\right)$ and let $\Omega'\subset H\times \CC^{r-1}$ be defined by
\[
 \Omega':=\left\{ (\tup{f},\tup{l})\in H\times \CC^{r-1}\,\biggm|\,\forall 1\le i\le r\,: \wedge^if_1=\prod_{j=1}^{i-1}l_j^{i-j}f_i,\, l_i\neq 0\right\}\,.
\]
The closure $\Omega$ of $\Omega'$ is the \emph{space of completed homomorphisms}. A point $f\in \Omega$ is written as $f:\CC^r\Rightarrow \CC^r$.

For a sequence $r_1<\ldots<r_{k+1}=r$ let $\Omega_{\tup{r}}$ be the subscheme
\[
 \Omega_{\tup{r}}:=\left\{(\tup{f},\tup{l})\in \Omega\,\Bigm|\,\forall 1\le i\le r\,: l_i=0 \Leftrightarrow i\in\tup{r} \right\}\,.
\]
This defines a stratification
\[
 \Omega=\bigcup_{\tup{r}} \Omega_{\tup{r}}\,.
\]
\begin{prop}[{\cite[Proposition 1]{lafforgue1998}}]
 A point $f\in \Omega_{\tup{r}}$ is uniquely determined by 
 \begin{enumerate}
 \item a tuple $(l_1,\ldots,l_{k})\in \CC^{r-1}$ with $l_i= 0 \Leftrightarrow i\in\tup{r}$ for $1\le i <r$,
  \item a descending flag $W_\bullet$ of length $k$ with $\dim(W_j)=r-r_j$, $1\le j \le k$,
  \item an ascending flag $W'_\bullet$ of length $k$ with $\dim(W'_j)=r_i$, $1\le j \le k$,
  \item  and isomorphisms $v_j:W_{j-1}/W_{j}\to W'_j/W'_{j-1}$ for $1\le j\le k+1$.
 \end{enumerate}
\end{prop}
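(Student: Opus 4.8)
The strategy is to reconstruct the flags, isomorphisms and scalars intrinsically from a point $f=(\tup{f},\tup{l})\in\Omega_{\tup{r}}$ and, conversely, to realise every choice of such data by an explicit point of $\Omega_{\tup{r}}$. On the open stratum the relations $\wedge^i f_1=\prod_{j=1}^{i-1}l_j^{i-j}f_i$ together with $\det f_1=l_1^{r-1}\cdots l_{r-1}f_r\neq 0$ identify $\Omega'=\Omega_{(r)}$ with $\GL(\CC^r)\times(\CC^*)^{r-1}$, a point being recorded by the invertible map $f_1$ and the nonzero scalars $l_i$. Since $\Omega$ is the closure of $\Omega'$, every point of $\Omega_{\tup{r}}$ is a limit $\lim_{t\to 0}(f_1(t),\tup{l}(t))$ of a one-parameter family in $\Omega'$, so the whole analysis reduces to computing such limits.

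First I would bring $f_1(t)$ into a normal form. After acting by fixed invertible matrices on source and target one may write $f_1(t)=U(t)D(t)V(t)$ with $U(t),V(t)$ converging to invertible limits $U_0,V_0$ and $D(t)=\opname{diag}(d_1(t),\ldots,d_r(t))$ a diagonal family whose entries have orders $\delta_1\le\cdots\le\delta_r$ in $t$. Feeding this into the renormalisation relations, the family $f_i(t)$ acquires a nonzero limit precisely when $\prod_{j=1}^{i-1}l_j(t)^{i-j}$ matches the order of the leading $i\times i$ minor of $\wedge^i f_1(t)$; a short computation then forces $\opname{ord}(l_i)=\delta_{i+1}-\delta_i$. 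Hence $l_i\to 0$ exactly at the positions where the order of the singular values jumps, and these positions are precisely the set $\tup{r}$. The maximal runs of constant order are the $k+1$ blocks of sizes $n_j:=r_j-r_{j-1}$ (with $r_0:=0$, $r_{k+1}:=r$); transporting the associated coordinate subspaces by the limits $U_0,V_0$ yields the descending kernel filtration $W_\bullet$ and the ascending image filtration $W'_\bullet$, the renormalised limits of the diagonal blocks yield the isomorphisms $v_j\colon W_{j-1}/W_j\to W'_j/W'_{j-1}$, and the surviving nonzero scalars $l_i$ ($i\notin\tup{r}$) encode the remaining within-block normalisations.

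It then remains to check that this assignment is a bijection. For injectivity one reads the data back off $f$: the filtration $W_\bullet$ is the chain of kernels of the secondary maps extracted from the $f_i$, the filtration $W'_\bullet$ is the corresponding chain of images, and the $v_j$ together with the nonzero $l_i$ are determined by the nonvanishing components of the $f_i$, none of which depends on the chosen family. For surjectivity one writes down, for arbitrary data $(\tup{l},W_\bullet,W'_\bullet,\{v_j\})$, a family $f_1(t)$ in block form with diagonal orders prescribed by the jump pattern of $\tup{r}$ and verifies that its limit in $\Omega$ lies in $\Omega_{\tup{r}}$ and reproduces the data. A dimension count is a useful consistency check: the two flags contribute $r^2-\sum_j n_j^2$, the isomorphisms contribute $\sum_j n_j^2$, and the nonzero scalars contribute $r-1-k$, for a total of $r^2+r-1-k=\dim\Omega'-k$, matching the codimension-$k$ locus cut out by $l_{r_1}=\cdots=l_{r_k}=0$. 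The main obstacle is the precise control of the closure: one must show that the relations $\wedge^i f_1=\prod_j l_j^{i-j}f_i$ pass to the limit so as to force exactly the block-decomposable structure---no more and no less---so that the $v_j$ are genuine isomorphisms and the nonzero $l_i$ are free, independently recoverable parameters with no residual scaling ambiguity or double counting. This delicate bookkeeping is exactly where Lafforgue's analysis of completed homomorphisms (equivalently, the theory of complete collineations and the wonderful compactification of $\PGL$) does the essential work, and I would follow that route.
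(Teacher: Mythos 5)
The paper does not actually prove this statement: it is imported verbatim from Lafforgue (\cite[Proposition 1]{lafforgue1998}), and the only thing the text supplies is the inverse construction --- the recipe \eqref{eq:hom} recovering $(\tup{f},\tup{l})$ from $(W_\bullet,W'_\bullet,\{v_j\},\tup{l})$ --- which is what is actually used later in \autoref{prop:stability_of_VB_w_level_structure}. So there is no proof in the paper to compare yours against; your proposal has to be judged on its own. The route you take (reduce to arcs in $\Omega'$, put $f_1(t)$ in Smith/Cartan normal form over $\CC[[t]]$, read off $\opname{ord}(l_i)=\delta_{i+1}-\delta_i$, and identify the blocks of constant order with the graded pieces of the two flags) is the standard and correct one, and your dimension count is a genuine consistency check. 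But as written this is an outline, not a proof, and you say so yourself: the ``delicate bookkeeping'' you defer to Lafforgue at the end is precisely the content of the proposition, namely that the closure relations force \emph{exactly} the block-decomposable form for every $f_i$ (so that the $v_j$ are honest isomorphisms and nothing degenerates further inside the stratum), and that the extracted data do not depend on the chosen arc.

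Concretely, four steps need to be supplied rather than asserted. (i) The reduction of ``point of the closure'' to ``limit along a one-parameter family'' requires the valuative criterion (a DVR-point of $\overline{\Omega'}$ landing in the stratum), possibly after a finite extension; it is not automatic from the definition of closure. (ii) The normal form $f_1(t)=U(t)D(t)V(t)$ with $U(0),V(0)$ invertible is exactly the elementary divisor theorem over $\CC[[t]]$ and should be invoked as such; note also that your normalization $\delta_1=0$ uses that $f_1$ must stay nonzero in $\End(\CC^r)\setminus\{0\}$, since $H$ is not projectivized. (iii) Injectivity/well-definedness needs the intrinsic recovery of the data from the limit alone: for $i=r_j\in\tup{r}$ the limit $f_{r_j}$ has rank one, $W_j$ is read off from its kernel (the largest subspace $W$ with $f_{r_j}$ factoring through $\bigwedge^{r_j}(\CC^r/W)$) and $W'_j$ from its image (a decomposable $r_j$-vector spanning $\bigwedge^{r_j}W'_j$); without this the construction could a priori depend on the arc. (iv) Surjectivity is the direction the paper actually records via \eqref{eq:hom}, and you should verify that the point so produced satisfies the defining relations of $\Omega$, i.e.\ is a limit from $\Omega'$. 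None of these steps fails, but until they are written out the argument is a plan for a proof rather than a proof, which is presumably why the paper simply cites Lafforgue instead of reproving the statement.
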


For our purposes we only need to recall how one recovers the point $f\in \Omega_{\tup{r}}$ from the flags $W_\bullet$, $W'_\bullet$ and the isomorphisms $v_i$, $1\le i\le k+1$: For a number $i=1,\ldots,r$ we define
 \begin{align*}
 j_-(i):&=\max\{ j\,|\,j=0,\ldots,k\,,\, r_j<i\}\,, & i_-:&=r_{j_-(i)}\,,\\
 j^+(i):&=\min\{ j\,|\, j=0,\ldots,k\,,\, i\le r_j\}\,, & i^+:&=r_{j^+(i)} \,.
\end{align*}
The isomorphisms $v_j$ induce an morphism $h_i$
\begin{equation}
\begin{gathered}
\label{eq:hom}
  \xymatrix{
  {\displaystyle \bigwedge^i \CC^r}  \ar@{->>}[r] \ar@{-->}[d]_{h_i} & {\displaystyle \bigwedge^{i-i_-} (W_{j_-(i)}/W_{j^+(i)}) \otimes \bigotimes_{j=1}^{j_-(i)} \bigwedge^{r_j-r_{j-1}}(W_{j-1}/W_j)} \ar[d] \\
   {\displaystyle \bigwedge^i \CC^r} & 
  {\displaystyle \bigwedge^{i-i_-} (W'_{j^+(i)}/W'_{j_-(i)}) \otimes \bigotimes_{j=1}^{j_-(i)} \bigwedge^{r_j-r_{j-1}} (W'_{j}/W'_{j-1})} \ar@{>->}[l] \rlap{\,.}
  }
  \end{gathered}
\end{equation}
 If one defines
 \[
  f_i:= \left(\prod_{\substack{1\le j <i\\j\notin \tup{r} }} l_j^{i-j} \right) h_i\,,
 \]
then $f=(\tup{f},\tup{l})\in \Omega_{\tup{r}}$ is the desired point.

The group $(\CC^*)^r$ acts freely on $\Omega$ by
 \begin{align*}
  (\CC^*)^r \times \Omega & \to \Omega\\
   (\tup{z},(\tup{f},\tup{l}))&\mapsto (\tup{f}',\tup{l}')
 \end{align*}
 with $f'_i:=z_if_i$, $1\le i\le r$ and $l'_i:=z_{i-1}^{-1} z_i^2 z_{i+1}^{-1}l_i$ for $1\le i \le r-1$, where $z_0:=1$. The quotient $\overline{\PGL}(r,\CC):=\Omega/(\CC^*)^r$ is isomorphic to the closure of the $\GL(r,\CC)$-orbit of $([\id_{\bigwedge^i \CC^r}],1\le i \le r)$ in $\prod_{i=1}^r \PP(\End(\bigwedge^i \CC^r)^\vee)$. This is the \emph{wonderful compactification} of De Concini--Procesi \cite{DCP83}.
 
 A \emph{vector bundle with a level structure} is defined to be a vector bundle $E$ together with a complete homomorphism $f:E_{|\{x_0\}}\Rightarrow \CC^ r$.  Let $\sigma$ be the natural action of $\GL(r)$ on $\overline{\PGL}(r,\CC)$. Then the category of vector bundles with a level structure is equivalent to the category of decorated vector bundles. We linearize the action in the line bundle
  \[
 \prod_{i=1}^r \pr_{\PP\left(\End(\bigwedge^i \CC^r)^\vee \right)}^*\mc{O}_{\PP\left(\End(\bigwedge^i \CC^r)^\vee\right)}(\theta_i)\,.
\]
 
A level structure $f$ on a vector bundle $E$ determines a sequence $r_1<\ldots<r_k<r$, a descending flag $E_\bullet$ of $E_{|\{x_0\}}$, an ascending flag $W'_\bullet$ of $\CC^r$ and elements $[v_i]\in \opname{Iso}(W_{i-1}/W_{i}, W'_i/W'_{i-1})/\CC^*$.
 
For a subspace $V\subset E_{|\{x_0\}}$ we define
\begin{equation*}
 c_i(V,W_\bullet) :=\min\left\{\dim(V/(V\cap W_{j^+(i)})), \dim(V/(V\cap W_{j_-(i)}))+ i-i_-\right\}\,.
\end{equation*}
 \begin{prop} \label{prop:stability_of_VB_w_level_structure}
 A vector bundle with a level structure $(E,f)$ is $\delta_2$-\textup{(}semi\nobreakdash-\textup{)}stable if and only if any non-trivial proper subbundle $F\subset E$ satisfies
 \[
  \deg_{\tup{\theta}}(E)\rk(F)-\deg_{\tup{\theta}}(F)\rk(E) (\ge) 0
 \]
where
\[
 \deg_{\tup{\theta}}(F):=\deg(F)-\delta_2\sum_{i=1}^r\theta_i \,c_i(F_{|\{x_0\}},W_\bullet) \,.
\]
\end{prop}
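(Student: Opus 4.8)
The plan is to specialize the general criterion of \autoref{def:stability} to this situation. As noted at the start of this section, trivial $\rho$ turns $(E,f)$ into a decorated vector bundle with $\mu_1\equiv 0$, so $(E,f)$ is $\delta_2$-(semi)stable exactly when
\[
 M(E_\bullet,\tup{\alpha})+\delta_2\mu_2(E_\bullet,\tup{\alpha},f)\,(\ge)\,0
\]
for every weighted flag $(E_\bullet,\tup{\alpha})$ of $E$. Viewing $f$ as the point $([f_1],\ldots,[f_r])$ of $\prod_{i=1}^r\PP(\End(\bigwedge^i\CC^r)^\vee)$ and recalling that the linearization is $\prod_i\pr_i^*\mc{O}(\theta_i)$, the term $\mu_2$ splits as $\mu_2(E_\bullet,\tup{\alpha},f)=\sum_{i=1}^r\theta_i\,\mu(\la,[f_i])$, where $\la$ is the one-parameter subgroup attached to $(E_\bullet,\tup{\alpha})$. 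First I would reduce to one-step flags: I claim $\mu_2(E_\bullet,\tup{\alpha},f)=\sum_l\alpha_l\,\mu_2(0\subset E_l\subset E,(1),f)$ is \emph{linear} in $\tup{\alpha}$, so that $M+\delta_2\mu_2$ is a positive combination of the corresponding expressions for the subbundles $E_l$; then the (semi)stability inequality holds for all weighted flags if and only if it holds for every nontrivial proper subbundle.

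The heart of the argument is the Hilbert--Mumford computation of each $\mu(\la,[f_i])$ for a subbundle $F\subset E$ with fibre $V:=F_{|\{x_0\}}$. Since $\overline{\PGL}(r,\CC)$ is a $\PGL(r)$-variety, the centre of $\GL(r)$ acts trivially, so $\mu(\la,[f_i])$ depends only on $\la$ modulo scalars and I may take the traceless one-parameter subgroup acting with weight $\rk(F)-r$ on $V$ and weight $\rk(F)$ on a complement. A wedge in $\bigwedge^i\CC^r$ using $p$ factors from $V$ then has weight $i\rk(F)-pr$, and the formula for $\mu$ on projective space at the end of \autoref{subsec:GIT} (cf.\ \autoref{Hilbert-Mumford-Krit}) gives $\mu(\la,[f_i])=r\,p_{\max}(i)-i\rk(F)$, where $p_{\max}(i)$ is the largest $p$ for which $f_i$ does not vanish on the image of $\bigwedge^pV\otimes\bigwedge^{i-p}\CC^r$ (here $f_i=h_i$ up to a nonzero scalar). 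Feeding in the explicit shape of $h_i$ from \eqref{eq:hom}, whose kernel is exactly the span of the wedges failing to fill the graded layers $W_{l-1}/W_l$ of the degeneration flag $W_\bullet$, the remaining and main obstacle is the purely combinatorial identity
\[
 p_{\max}(i)=c_i(V,W_\bullet)\,.
\]
The hard part will be this identity: I would establish it by a flag-rank argument, filling the layers $W_{l-1}/W_l$ for $l\le j_-(i)$ completely and the layer $W_{j_-(i)}/W_{j^+(i)}$ to depth $i-i_-$ with as many vectors of $V$ as possible; the maximum achievable is exactly the minimum appearing in the definition of $c_i$. The same greedy description, whose combinatorial type is constant on the positive weight cone once the ordering of the weights is fixed, is precisely what makes $\mu(\la,[f_i])$ linear in $\tup{\alpha}$ and so justifies the reduction to subbundles claimed above.

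Granting $\mu(\la,[f_i])=r\,c_i(V,W_\bullet)-i\rk(F)$, the remainder is bookkeeping. Summing against $\theta_i$ yields
\[
 \mu_2(0\subset F\subset E,(1),f)=r\sum_{i=1}^r\theta_i\,c_i(F_{|\{x_0\}},W_\bullet)-\rk(F)\sum_{i=1}^r i\,\theta_i\,,
\]
and the identity $c_i(E_{|\{x_0\}},W_\bullet)=i$ (immediate, since $V=\CC^r$ gives $\dim(V/(V\cap W_{j^+(i)}))=r_{j^+(i)}\ge i$) lets me replace $\sum_i i\theta_i$ by $\sum_i\theta_i\,c_i(E_{|\{x_0\}},W_\bullet)$. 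Substituting into $M+\delta_2\mu_2\,(\ge)\,0$ with $M=\deg(E)\rk(F)-\deg(F)\rk(E)$ and $r=\rk(E)$, and collecting the coefficients of $\rk(F)$ and $\rk(E)$, produces exactly
\[
 \deg_{\tup{\theta}}(E)\rk(F)-\deg_{\tup{\theta}}(F)\rk(E)\,(\ge)\,0\,,\qquad
 \deg_{\tup{\theta}}(F)=\deg(F)-\delta_2\sum_{i=1}^r\theta_i\,c_i(F_{|\{x_0\}},W_\bullet)\,,
\]
which is the asserted criterion. I expect $p_{\max}(i)=c_i$ to be the only genuinely delicate point; everything else is the specialization of \autoref{def:stability} together with elementary rearrangement.
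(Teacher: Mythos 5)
Your proposal is correct and takes essentially the same route as the paper: the paper's (very terse) proof consists precisely of the Hilbert--Mumford computation $\mu(\la,[h_i])=\sum_k\alpha_k\bigl(r\,c_i(V_k,W_\bullet)-i\dim(V_k)\bigr)$, which is your identity $p_{\max}(i)=c_i$ combined with the linearity in $\tup{\alpha}$, followed by the same rearrangement using $c_i(E_{|\{x_0\}},W_\bullet)=i$. The only difference is presentational: you reduce to one-step flags before computing, while the paper states the linear formula for a general weighted flag outright and leaves the deduction to the reader.
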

\begin{proof}
Let $V:=E_{|\{x_0\}}$ and define $h_i:\bigwedge^i V\to \bigwedge^i \CC^r$ by the diagram similar to \eqref{eq:hom}.
 Then for the point $[h_i]\in \PP(\Hom(\bigwedge^i V,\bigwedge^i \CC^r)^\vee)$ and a one-parameter subgroup $\la$ of $\SL(V)$ with weighted flag $(V_\bullet,\tup{\alpha})$ one calculates
 \begin{align*}
 \mu(\la,[u_i]) =  \sum_{k=1}^{\len(V_\bullet)}\alpha_k \big(r c_i(V_k,W_\bullet) -\dim(V_k) i \big)\,.
\end{align*}
From this one easily deduces the claim.
\end{proof}

In order to compare this to results in the literature we introduce the polynomial
\[
 q(s):= \sum_{1\le i\le s} \theta_i i + \sum_{s<i\le r} \theta_i s -\frac{s}{r}\sum_{i=1}^r \theta_i i\,.
\]
\begin{lemma}
We have
\[
  \sum_{i=1}^r \theta_i \left(r c_i(F_{|\{x_0\}},W_\bullet)-\rk(F)i\right)
  = r\sum_{s\in \tup{r}}\left(q\left(s_- +\dim(\overline{W}_{j_-(s)})/\overline{W}_{j^+(s)})\right)-q(s_-)\right)\,,
\]
where  $\overline{W}_j:=F_{|\{x_0\}}\cap W_j$.
\end{lemma}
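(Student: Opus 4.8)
The plan is to reduce the identity to a purely combinatorial computation in the fibre $V := F_{|\{x_0\}}$ and then to recognise both sides as the same Abel summation. Write $f := \rk(F) = \dim(V)$, set $r_0 := 0$ and $r_{k+1} := r$, and abbreviate $a_j := \dim(\overline{W}_j) = \dim(V \cap W_j)$ and $m_j := a_{j-1} - a_j = \dim(\overline{W}_{j-1}/\overline{W}_j)$ for $1 \le j \le k+1$. Since $\dim(W_{j-1}/W_j) = r_j - r_{j-1}$ one has $0 \le m_j \le r_j - r_{j-1}$, and because $W_0 = \CC^r$ and $W_{k+1} = 0$ we get $a_0 = f$, $a_{k+1} = 0$, hence $\sum_j m_j = f$. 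Thus the sum over $s \in \tup{r}$ on the right is understood to run over $s = r_1,\ldots,r_{k+1} = r$, and for $s = r_j$ its summand is $q(r_{j-1} + m_j) - q(r_{j-1})$.

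First I would make the index functions explicit on each block. For $r_{j-1} < i \le r_j$ one checks straight from the definitions that $j_-(i) = j-1$, $j^+(i) = j$ and $i_- = r_{j-1}$, the endpoint $i = r_j \in \tup{r}$ included. Substituting into the definition of $c_i$ and using $\dim(V/(V\cap W_j)) = f - a_j$ gives
\[
 c_i = \min\{f - a_j,\ (f - a_{j-1}) + (i - r_{j-1})\} = (f - a_{j-1}) + \min\{m_j,\ i - r_{j-1}\}\,.
\]
The purpose of this formula is that, as $i$ runs from $1$ to $r$, the function $c_i$ is the counting function of the set
\[
 D := \bigcup_{j=1}^{k+1}\{\,r_{j-1}+1,\ \ldots,\ r_{j-1}+m_j\,\}\,, \qquad c_i = \#\{i' \in D : i' \le i\}\,,
\]
with $|D| = \sum_j m_j = f$. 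The only thing to verify is that the first differences $c_i - c_{i-1}$ equal $1$ exactly on $D$, and here one must check the block boundaries ($c_{r_{j-1}} = f - a_{j-1}$ matches the end of the previous block) and the degenerate case $m_j = 0$.

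Next I would evaluate both sides with the same bookkeeping. Abbreviate $\Theta := \sum_{i=1}^r \theta_i i$ and $T_s := \sum_{i=s+1}^r \theta_i$. Swapping the order of summation in the counting formula for $c_i$ gives the Abel summation
\[
 \sum_{i=1}^r \theta_i c_i = \sum_{i' \in D} \sum_{i=i'}^r \theta_i = \sum_{i' \in D} T_{i'-1}\,,
\]
so the left-hand side of the lemma equals $r\sum_{i'\in D} T_{i'-1} - f\Theta$. For the right-hand side I would record the finite difference of $q$, namely $q(s+1) - q(s) = T_s - \Theta/r$, which is immediate from the definition of $q$. Telescoping then yields $q(r_{j-1}+m_j) - q(r_{j-1}) = \sum_{s=r_{j-1}}^{r_{j-1}+m_j-1}(T_s - \Theta/r)$, and upon summing over $j$ the index set of the $s$'s is exactly $\{i'-1 : i' \in D\}$. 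Hence the right-hand side equals $r\sum_{i'\in D}T_{i'-1} - \Theta\,|D| = r\sum_{i'\in D}T_{i'-1} - f\Theta$, which coincides with the left-hand side.

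The argument is formal once the block formula for $c_i$ is in place, so the main obstacle is the identification of $c_i$ with the counting function of $D$: one has to confirm that the minimum defining $c_i$ behaves correctly at the endpoints $i = r_j$ and when some $m_j$ vanishes, so that the index sets on the two sides align. It is worth noting that the identity is genuinely global and does \emph{not} hold block by block, since it is the tails $T_{i'-1}$, reaching past the block containing $i'$, that force the two sides to agree only after summation.
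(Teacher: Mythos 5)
Your proof is correct, and it is essentially the same argument as the paper's: both amount to interchanging the double sum over $i$ and $s\in\tup{r}$ and matching the coefficient of each $\theta_i$ with $c_i(F_{|\{x_0\}},W_\bullet)$, the paper doing this by expanding $q$ at the two arguments while you package it via the counting function of the set $D$ and the first difference $q(s+1)-q(s)$. Your reading that the sum over $s\in\tup{r}$ includes $s=r_{k+1}=r$ is the intended one, and your block formula for $c_i$ on $r_{j-1}<i\le r_j$ matches the paper's computation of $B_{s=i^+}$.
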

\begin{proof}
We calculate
 \begin{align*}
  A:=\,&\sum_{s\in \tup{r}} (q(s_- +\dim(\overline{W}_{j_-(s)}/\overline{W}_{j^+(s)}))-q(s_-))\\
  =\,&  \sum_{i=1}^r \theta_i\left(\sum_{\substack{s\in \tup{r}\\s> i^+}}(i-i)+B_{s=i^+} + \sum_{\substack{s\in \tup{r}\\s\le i_-}}\dim(\overline{W}_{j_-(s)}/\overline{W}_{j^+(s)}) \right) -\left(\sum_{i=1}^r \theta_i i\right)\frac{\rk(F)}{r}\,,
 \end{align*}
where
\begin{align*}
 B_{s=i^+}=\,&\begin{cases}
    \dim(\overline{W}_{j_-(s)}/\overline{W}_{j^+(s)}) \,, & s_- + \dim(\overline{W}_{j_-(s)}/\overline{W}_{j^+(s)})< i\,,\\
    t - s_- \,, & s_- +\dim(\overline{W}_{j_-(s)}/\overline{W}_{j^+(s)}) \ge i
   \end{cases}\\
   =\,& \min\left\{\dim(\overline{W}_{j_-(s)}/\overline{W}_{j^+(s)}), i-i_-\right\}\,.
\end{align*}
Due to
\[
\sum_{\substack{s\in \tup{r}\\s\le i_-}}\dim(\overline{W}_{j_-(s)}/\overline{W}_{j^+(s)})=\dim(F_{|\{x_0\}}/\overline{W}_{j_-(i)})
\]
one finally gets
\[
 A=  \sum_{i=1}^r \theta_i c_i(F_{|\{x_0\}},W_\bullet)  -\left(\sum_{i=1}^r \theta_i i \rk(F)\right)  \frac{1}{r}\,.
\]
\end{proof}

\begin{cor}
 A vector bundle with a level structure $(E,f)$ is $\delta_2$-\textup{(}semi\nobreakdash-\textup{)}stable if and only if any non-trivial proper subbundle $F\subset E$ satisfies
 \[
  \deg(F)r (\le) \deg(E)\rk(F)+r\delta_2\sum_{s\in\tup{r}}q\left(s_-+\dim(\overline{W}_{j_-(s)}/\overline{W}_{j^+(s)})\right) -q(s_-)\,.
\]
\end{cor}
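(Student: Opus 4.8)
The plan is to derive this identity directly from \autoref{prop:stability_of_VB_w_level_structure} by substituting the computation of the preceding Lemma. First I would write out the $\delta_2$-(semi-)stability criterion from \autoref{prop:stability_of_VB_w_level_structure} explicitly, expanding both $\deg_{\tup{\theta}}(E)$ and $\deg_{\tup{\theta}}(F)$ according to their definition. Using $\rk(E)=r$, the linear terms regroup into
\[
 \deg(E)\rk(F)-\deg(F)r+\delta_2\Big(r\sum_{i=1}^r\theta_i\,c_i(F_{|\{x_0\}},W_\bullet)-\rk(F)\sum_{i=1}^r\theta_i\,c_i(E_{|\{x_0\}},W_\bullet)\Big)(\ge)0\,.
\]

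The only computation that is genuinely needed is the value of $c_i$ for the full space $V=E_{|\{x_0\}}$. Since then $V\cap W_j=W_j$ with $\dim(W_j)=r-r_j$, one obtains $\dim(V/(V\cap W_{j^+(i)}))=i^+$ and $\dim(V/(V\cap W_{j_-(i)}))+i-i_-=i$, so that $c_i(E_{|\{x_0\}},W_\bullet)=\min\{i^+,i\}=i$, the minimum being $i$ because $i\le i^+$ by the definition of $i^+$. Hence $\sum_{i=1}^r\theta_i\,c_i(E_{|\{x_0\}},W_\bullet)=\sum_{i=1}^r\theta_i\,i$, and the bracketed $\delta_2$-term collapses to exactly $r\sum_{i=1}^r\theta_i\,c_i(F_{|\{x_0\}},W_\bullet)-\rk(F)\sum_{i=1}^r\theta_i\,i$, which is precisely the left-hand side of the identity proved in the preceding Lemma.

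Finally I would invoke that Lemma to replace this expression by $r\sum_{s\in\tup{r}}\big(q(s_-+\dim(\overline{W}_{j_-(s)}/\overline{W}_{j^+(s)}))-q(s_-)\big)$, and then move the term $\deg(F)r$ to the opposite side of the inequality, which turns the $(\ge)$ into $(\le)$ and yields the stated bound verbatim. Since both \autoref{prop:stability_of_VB_w_level_structure} and the Lemma are already at our disposal, there is no serious obstacle: the Corollary is a purely formal rewriting. The only points deserving attention are the boundary evaluation $c_i(E_{|\{x_0\}},W_\bullet)=i$ and a careful check that the $(\ge)$/$(\le)$ convention is respected when $\deg(F)r$ is carried across the inequality.
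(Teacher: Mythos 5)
Your proposal is correct and is exactly the derivation the paper intends (the corollary is stated without proof as an immediate consequence of \autoref{prop:stability_of_VB_w_level_structure} and the preceding Lemma): expand $\deg_{\tup{\theta}}$, observe $c_i(E_{|\{x_0\}},W_\bullet)=\min\{i^+,i\}=i$ so that the $\delta_2$-term matches the Lemma's left-hand side, substitute, and rearrange. The boundary computation and the sign bookkeeping you single out are indeed the only points requiring care, and you handle both correctly.
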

Thus, we recover the stability condition that Ng\^o Dac used for the compactification of the stack of shtukas \cite[Th\'eor\`eme A]{Ngo2007}.

\subsection{Parabolic Higgs Bundles and Decorated principal bundles}
As mentioned in the abstract parabolic Higgs bundles can also be treated as a special case of decorated swamps. However, the stability concept does not immediately reduce to the known definition of stability. Instead one recovers the known stability concept in the limit of large $\delta_1$. 

Following the strategy of Schmitt \cite{schmitt08} one can use the moduli space of decorated swamps to construct the moduli space of decorated principal bundles. Examples of these objects include parabolic principal bundles and principal bundles with a level structure.

These results are contained in the authors thesis \cite{Beck2014} and will appear in a seperate publication.

\end{document}